\newcommand{\E}{\mathbb{E}}
\newcommand{\pa}{\partial}
\newcommand{\la}{\label}
\newcommand{\fr}{\frac}
\newcommand{\na}{\nabla}
\newcommand{\be}{\begin{equation}}
\newcommand{\ee}{\end{equation}}
\newcommand{\ba}{\begin{array}{l}}
\newcommand{\ea}{\end{array}}
\newcommand{\beg}{\begin}
\renewcommand{\l}{\Lambda}
\newcommand{\N}{\mathbb N}
\newcommand{\R}{\mathbb R}
\newcommand{\mbF}{\mathbb F}
\def\ZZ{{\mathbb Z}}
\def\RR{{\mathbb R}}
\def\TT{{\mathbb T}}
\def\NN{\mathbb N}
\def\PP{\mathbb P}
\DeclareMathOperator*{\esssup}{ess\,sup}
\theoremstyle{plain}
\newtheorem{Thm}{Theorem}[section]
\newtheorem{prop}[Thm]{Proposition}
\theoremstyle{definition}
\newtheorem{defi}[Thm]{Definition}
\newtheorem{setting}{Setting}
\theoremstyle{remark}
\newtheorem{rem}{Remark}
\numberwithin{equation}{section}
\title[The Stochastic Nernst-Planck-Navier-Stokes System]
{On the Long-time Dynamics and Ergodicity of the Stochastic Nernst-Planck-Navier-Stokes System}
\author[E. Abdo]{Elie Abdo}
\address[E. Abdo]
{	Department of Mathematics \\
     University of California  \\
	Santa Barbara, CA 93106-3080, USA.} \email{elieabdo@ucsb.edu}
\author[R. Hu]{Ruimeng Hu}
\address[R. Hu]
{Department of Mathematics \\
Department of Statistics and Applied Probability \\
     University of California  \\
	Santa Barbara, CA 93106-3080, USA.} \email{rhu@ucsb.edu}
\author[Q. Lin]{Quyuan Lin}
\address[Q. Lin]
{	School of Mathematical and Statistical Sciences \\
Clemson University\\
Clemson, SC 29634, USA.} \email{quyuanl@clemson.edu}
\date{\today}
\begin{document}
\maketitle

\begin{abstract}
 {We consider an electrodiffusion model that describes the intricate interplay of multiple ionic species with a two-dimensional, incompressible, viscous fluid subjected to stochastic additive noise. This system involves nonlocal nonlinear drift-diffusion Nernst-Planck equations for ionic species and stochastic Navier-Stokes equations for fluid motion under the influence of electric and time-independent forces. Under the selective boundary conditions imposed on the concentrations, we establish the existence and uniqueness of global pathwise solutions to this system on smooth bounded domains. Our study also investigates long-time ionic concentration dynamics and explores Feller properties of the associated Markovian semigroup. In the context of equal diffusive species and under appropriate conditions, we demonstrate the existence of invariant ergodic measures supported on $H^2$. We then enhance the ergodicity results on periodic tori and obtain smooth invariant measures under a constraint on the initial spatial averages of the concentrations. The uniqueness of the invariant measures on periodic boxes and smooth bounded domains is further established when the noise forces sufficient modes, and the diffusivities of the species are large. Finally, in the case of two ionic species with equal diffusivities and valences of $1$ and $-1$, we study the rate of convergence of the Markov transition kernels to the invariant measure and obtain unconditional, unique exponential ergodicity for the model.}  
\end{abstract}


Keywords: Stochastic Nernst-Planck-Navier-Stokes system, global well-posedness, long-time dynamics, ergodic invariant measure, exponential ergodicity

\section{Introduction}

Electrodiffusion in fluids is a physical phenomenon that describes the transport of ions driven by three main processes: advection by the fluid velocity, diffusion by the gradient of the ionic concentrations, and migration by the gradient of the electric field. Studies of electrodiffusion phenomena have been extensively addressed in different branches of science, bringing forth outstanding applications in the real world. In \cite{tan2016computational}, the relation between the dendrite formation on the anode surface of batteries and the transport of ions near the dendrite nucleation site is investigated, aiming at improving the performance and lifetime of batteries. Interpretations of the electrodiffusion-related processes that occur in neurophysiology lead to a more accurate understanding of the nervous system and, in particular, of neural tissues \cite{jasielec2021electrodiffusion}. 
{
For more related applications in neuroscience, see \cite{nicholson2000diffusion,qian1989electro,pods2013electrodiffusion,mori2009numerical,lopreore2008computational,koch2004biophysics,cole1965electrodiffusion,savtchenko2017electrodiffusion}.
}
The mass transport from a landfill site to its neighboring environment is determined by the electrodiffusion of contaminants, and considerable efforts have been dedicated to understanding the diffusion mechanism and, consequently, improving the engineering design of liner systems \cite{jungnickel2004coupled}. 
{Further tremendous applications of electrodiffusion occurrences in semiconductors \cite{biler1994debye,gajewski1986basic,mock1983analysis}, water purification, desalination, and ion separations \cite{alkhad2022electrochemical,zhu2020ion,lee2018diffusiophoretic,yang2019review,gao2014high,lee2016membrane}, and ion selective membranes \cite{davidson2016dynamical,goldman1989electrodiffusion} have been widely studied in the literature.}

\subsection{The Stochastic Nernst-Planck-Navier-Stokes System}

In this paper, we consider an electrodiffusion model that describes the nonlinear time evolution of $N$ ionic concentrations in a two-dimensional incompressible viscous fluid, forced by the electric field induced by the ions, some time-independent body forces, and an additive stochastic noise process. For each $i \in \left\{1, \dots, N \right\}$, the corresponding ionic concentration $c_i$ evolves according to a deterministic Nernst-Planck equation
\be \la{1}
\partial_t c_i + u \cdot \nabla c_i = D_i \mathrm{div} (\nabla c_i + z_ic_i\nabla \Phi),
\ee
where $D_i$ and $z_i$ are, respectively, the diffusivities and valences of the ionic species. The potential 
$\Phi:= \frac{e}{K_{\beta}T_k}\Psi$ 
is the normalization
of the electric potential $\Psi$ created by the total charge density 
\noeqref{eqn:charge-density}
\be \label{eqn:charge-density}
\rho = \sum\limits_{i=1}^{n} z_ic_i,
\ee 
and  obeys  the semi-linear Poisson equation 
\noeqref{eqn:poisson}
\be \label{eqn:poisson}
-\epsilon \Delta \Phi = \rho.
\ee 
Here, $e$ is the elementary charge, $K_{\beta}$ is the Boltzmann constant, $T_k$ denotes temperature, and
$ 
\epsilon := \frac{\mathcal{E}K_{\beta}T_k}{e^2} = c_0 \left(\sum\limits_{i=1}^{m} z_i^2\right)\lambda_D^2,
$
where the constant $\mathcal{E} > 0$ represents the dielectric permittivity of the solvent, the constant $c_0 > 0$ is a reference bulk concentration of ions,  and the constant 
$
\lambda_D := \sqrt{\frac{\mathcal{E}K_{\beta}T_k}{c_0e^2 \sum\limits_{i=1}^{m}z_i^2}}
$
is the Debye screening length. The velocity $u$ of the fluid {satisfies} the stochastic incompressible Navier-Stokes equation
\noeqref{eqn:NSE}
\be \label{eqn:NSE}
d u + u \cdot \nabla u dt - \nu \Delta u dt+ \nabla p dt
= - K_{\beta}T_k \rho \nabla \Phi dt+ f dt+ gdW,
\ee
{and} obeys the divergence-free condition 
\be \la{2}
\nabla \cdot u = 0.
\ee 
Here $p$ represents the pressure of the fluid, {$W(t)$} is a standard Wiener process, and $\nu$ denotes the kinematic viscosity. The forces $f$ and $g$ are assumed to be time-independent and divergence-free.

As discussed in \cite{constantin2019nernst}, there are mainly two types of boundary conditions for the NPNS system, the blocking boundary conditions and the selective boundary conditions.
Blocking boundary conditions refer to ions that are not allowed to cross the boundary of the domain, in which case they have a vanishing boundary normal flux. Selective boundary conditions refer to ions that may cross some parts of the boundary while being blocked from crossing other parts. In our case, we consider uniform selective boundary conditions where the term ``uniform" characterizes the space-time independent constant values of the potential and the first $M$ ionic species on the boundary. Specifically, 
we study the model \eqref{1}--\eqref{2} on a bounded domain $\mathcal{O}\subset \mathbb R^2$ with a smooth boundary, equipped with the following mixed boundary {(uniform selective boundary)} conditions
\begin{equation}\la{3}
\begin{split}
    &u|_{\pa \mathcal{O}} = 0,
    \,\,\Phi|_{\pa \mathcal{O}} = \gamma, \, \,c_i|_{\pa \mathcal{O}} = \gamma_i \,\text{ for }\,i= 1, \dots, M, \\
    &\left(\na c_i + z_i c_i \na\Phi\right)|_{\pa \mathcal{O}} \cdot n = 0 \,\text{ for }\, i = M+1, \dots, N,
\end{split}
\end{equation}
 where $\gamma, \gamma_1, \dots, \gamma_m$ are positive constants, and $n$ is the outward unit normal to $\pa \mathcal{O}$; and on the two-dimensional torus $\TT^2 = [0,2\pi]^2$, equipped with periodic boundary conditions. In addition, the force $g$ is assumed to be zero on the boundary, i.e., $g|_{\pa \mathcal{O}} = 0$.
We denote the initial data by 
\be \la{4}
u(x,0) = u_0, \; c_i(x,0) = c_i(0).
\ee The initial boundary value problem described by equations \eqref{1}--\eqref{4} is called the stochastic Nernst-Planck-Navier-Stokes system and abbreviated by S-NPNS throughout the paper. Furthermore, we take the physical constants $\epsilon, \nu, K_{\beta}$, and $T_k$ to be $1$ for the sake of simplicity. 

\begin{rem}
  For $i=M+1,\dots, N$, the spatial mean of each concentration $c_i$ is conserved in time and amounts to the initial spatial average 
$
\bar{c}_i(0) := \frac{1}{|\mathcal{O}|}\int_{\mathcal{O}} c_i(x,0) dx,$ a key property of the model that is frequently exploited in the analysis of its features. This fact follows from integrating the ionic concentration equation over $\mathcal{O}$ and making use of the boundary condition \eqref{3} for $i=M+1,\dots, N$, together with the divergence-free condition \eqref{2}. Based on this observation, we use the notation $\bar{c}_i$ throughout the paper to denote the constant average of $c_i$ over $\mathcal{O}$ for any time $t\geq 0$. 
\end{rem}

Different mathematical challenges arise, not only from the nonlinear and nonlocal aspects of the model, but also from the boundary effects, the number of ionic species, and the values of their valences and diffusivities.  Generally, the following four settings are explored in various contexts and arranged below by their level of generality:
\begin{enumerate}[\bf A:]\itemsep-0.1em
    \item\label{setting A} $N$ ionic species with arbitrary valences and diffusivities; 
    \item\label{setting B} $N$ ionic species with the same diffusivities but arbitrary valences;
    \item\label{setting C} $N$ ionic species with the same diffusivities and the same absolute values of the valences;
    \item\label{setting D} Two ionic species with the same diffusivities and valences $1$ and $-1$.
\end{enumerate}

\subsection{Literature for the Deterministic Case} The deterministic unforced Nernst-Planck-Navier-Stokes (NPNS) system has been widely studied over the last decade in the presence and absence of physical boundaries. In \cite{ryham2009existence}, the existence, uniqueness, and long-time behavior of solutions were obtained under Setting \ref{setting D} for $L^2$ large and small initial data on 2D and 3D smooth bounded domains respectively, provided that the concentrations have blocking (no flux) boundary conditions and the electric potential vanishes on the boundary. Those aforementioned results were also established in \cite{schmuck2009analysis} for homogeneous Neumann boundary conditions imposed on the potential.  
In \cite{constantin2019nernst}, the authors considered the NPNS model in the most general setting \ref{setting A} on 2D bounded smooth domains with selective boundary conditions and obtained the global regularity of solutions for $W^{2,p}$ initial data and proved their convergence to stable steady states. 
Under a regularity condition imposed on the velocity, global regular solutions were obtained in \cite{constantin2021nernst} on 3D smooth bounded domains for selective boundary conditions under Setting \ref{setting C} where the ions have valences $z_i \in \left\{1, - 1\right\}$ and Setting \ref{setting D}  for two species with opposite valences. 
Regarding Boltzmann states' stability, \cite{constantin2022nernst} illustrated nonlinear stability in both 2D and 3D bounded domains under certain boundary conditions, and instabilities have been examined in simplified models through mathematical and numerical approaches \cite{rubinstein2000electro,zaltzman2007electro}. Furthermore, empirical evidence of these instabilities occurring under selective boundary conditions was reported in \cite{rubinstein2008direct}. The existence of a global unique smooth solution was established in \cite{abdolong} on $d$-dimensional periodic boxes in Setting \ref{setting A} and the exponential stability of solutions was shown when $d=2$. 
{In addition, the analyticity of the solutions was established in \cite{abdo2022space}.
}

\subsection{Main Results} Under different assumptions imposed on the size of the initial data, the size of the stochastic and deterministic forcing, the parameters of the problem, the boundary data, or the geometry of the domain, we address the following four main questions:
\begin{enumerate}
    \item[(I)] The global well-posedness of the stochastic Nernst-Planck-Navier-Stokes (S-NPNS) system;
    \item[(II)] The long-time behavior of the ionic concentrations in $L^p$ spaces;
    \item[(III)] The existence, smoothness, and uniqueness of invariant ergodic measures for the Markov transition kernels associated with the model;
    \item[(IV)] The rate of convergence of the Markov kernels to the unique invariant measure. 
\end{enumerate}

\smallskip
\noindent{\bf{Global Well-Posedness of the Model.}} The first main result of this paper concerns the existence and uniqueness of global pathwise solutions on bounded domains with mixed boundary conditions (Theorem \ref{glos}). In contrast with existing results in the presence of boundaries where $H^2$ Sobolev initial regularity is imposed on the initial data, we present an iterative scheme that yields unique weak-strong solutions for $L^2$ initial velocity and concentrations, in spite of the additive stochastic forcing. The constructed iteration gives rise to a locally unique solution on a short time interval $[0,T_0]$. Via a blow-up criterion, we extend the local solution from $[0, T_0]$ globally to any time interval $[0.T]$, provided that the initial ionic concentrations are nonnegative. Although we restrict ourselves to the case of a positive constant boundary potential, the approximating scheme presented in Section~\ref{sec:existence} also works in the case of spatially dependent Dirichlet boundary conditions imposed on $\Phi$. This boundary restriction is needed throughout the paper to investigate the asymptotic behavior of the concentrations and ergodicity of the model.


\smallskip

\noindent{\bf{Long-Time Dynamics of the Ionic Concentrations.}} 
Our second set of main results addresses the rate of convergence of the ionic concentrations $c_1, \dots, c_N$ to their steady states (which are constants $\gamma_i$ for $i=1,\dots,M$ and $\bar c_i$ for $i=M+1,\dots,N$) when the relation 
\be \la{44}
\sum\limits_{i=1}^{M} z_i \gamma_i +  \sum\limits_{i=M+1}^{N} z_i \bar{c}_i(0) = 0.
\ee 
is imposed. Under \eqref{44}, we prove that the $L^2$ norm of each concentration $c_i$ decays exponentially fast in time to $\gamma_i$ for $i \in \left\{1, \dots, M\right\}$ and $\bar{c}_i$ for $i \in \left\{M+1, \dots, N\right\}$ under the assumption that the initial and boundary data are sufficiently small in $L^2$ (Theorem \ref{longt}). Furthermore, given an even integrability exponent $p$, we present a new proof by induction by which we obtain the exponential decay in the spatial $L^p$ norm with a rate depending on that in the $L^{p-2}$ norm for any large initial concentration in $L^p$. A bootstrapping argument yields consequently the exponential convergence in time of all ionic concentrations to constant values in $L^p$ spaces for a small initial datum in $L^2$ and small boundary values $\gamma_1, \dots, \gamma_{M}$ (Theorem \ref{t4}). 

The relation \eqref{44} is motivated by a result of the deterministic unforced Nernst-Planck-Navier-Stokes system \cite{constantin2019nernst} $(f=g=0)$ under selective boundary conditions obeyed by the ionic concentrations and Dirichlet conditions obeyed by the potential $\Phi|_{\partial \mathcal O} = \gamma(x)$ with $\gamma(x)$ depending on space. It is proved therein that the ionic concentrations $c_1, \dots, c_N$ and potential $\Phi$ converge in time to steady states $c_1^*, \dots, c_N^*$ and $\Phi^*$ respectively with 
\begin{align}
-\epsilon \Delta \Phi^* = \sum\limits_{i=1}^{N} z_i c_i^* \quad &\text{with} \quad  \Phi^*|_{\partial \mathcal O} = \gamma, \\ 
c_i^* = \gamma_i e^{z_i\gamma} e^{-z_i\Phi^*} \text{ for } i \in \left\{1, \dots, M \right\} \quad &\text{and} \quad 
c_i^* = \fr{\int_{\mathcal{O}} c_i(x,0)dx}{\int_{\mathcal{O}} e^{-z_i \Phi^*} dx} e^{-z_i \Phi^*} \text{ for } i \in \left\{M+1, \dots, N\right\}.
\end{align} 
Our setting that the potential $\Phi$ restricted to the boundary is a constant $\gamma$ independent of the spatial variable (cf. \eqref{3}) forces the limiting potential $\Phi^*$ to be  $\gamma$ everywhere in $\mathcal{O}$ and its Laplacian to vanish, thus naturally gives rise to \eqref{44}.

\smallskip

\noindent{\bf{Ergodicity on Bounded Domains.}} The Markov transition functions associated with the initial boundary value problem \eqref{1}--\eqref{4} are well-defined instantaneously in time on the largest space where the uniqueness of probabilistically strong solutions is guaranteed which, in our case, turns out to be the subspace of square-integrable vector fields $(v, \xi_1, \dots, \xi_N)$ where $v$ is divergence-free with Dirichlet boundary conditions, $\xi_1, \dots, \xi_M$ are nonnegative and amounts to $\gamma_1, \dots, \gamma_M$ respectively on $\pa \mathcal{O}$, and $\gamma_{M+1}, \dots, \gamma_N$ are nonnegative with blocking boundary conditions. 

In the third set of results, we first define the corresponding Markovian semigroup and investigate its Feller properties in the most general setting \ref{setting A} via continuous dependency estimates (Theorem \ref{feller}). In contrast with the 2D Navier-Stokes equations where $L^2$ cancellations law reduces the influence of the nonlinearities, a few challenges arise from the analysis of the electromigration effects but are handled by the dissipative structure of the Nernst-Planck equations and the elliptic regularity obeyed by the electric potential. 

Having the Feller continuity in hand, we dive into the question of whether or not invariant ergodic measures exist (Theorem \ref{kbprocedure}).
We consider the stochastic dynamics of $N$ ionic concentrations with Dirichlet boundary conditions and seek topologies where one has cancellation identities for the high regular nonlinearities of the problem. Due to the incompressibility of the fluid, the cancellation law
\be 
\int_{\mathcal{O}} (\rho \na \Phi) \cdot u dx + \int_{\mathcal{O}} (u \cdot \na \rho) \Phi dx = 0
\ee holds and motivates the coupling of the deterministic $H^{-1}$ evolution of the charge density $\rho$ with the stochastic $L^2$ evolution of the velocity $u$. Due to the nonnegativity of the ionic concentrations, the electromigration effects are fully dissipated when the species have equal diffusivities, yielding quadratic moment bounds, linear in time, for the $H^1$ norm of $u$ and $L^2$ norm of $\rho$.  {We then address the $L^2$ evolution of each ionic concentration $c_i$ and derive inequalities that bound the logarithmic Sobolev moments of the energies $\|\na c_i\|_{L^2}$ by those of $\|c_i - \gamma_i\|_{L^2}$, reducing consequently the regularity problem from $H^1$ to $L^2$.}  
In order to obtain good control of these $L^2$ energy norms of the ionic concentrations, further implicit cancellations are required and many challenges come into play.
{This issue can be effectively tackled in a situation  when each $i$th ionic species with valence $z_i$ is accompanied by another $j$th species with valence $z_j = -z_i$.} 
In this scenario, we couple the deterministic spatial $L^2$ evolutions of the difference $\rho_{ij} = c_i - c_j$ and sum $\sigma_{ij} = c_i + c_j$ and observe that the migration process determined by 
\be 
\int_{\mathcal{O}} \na \cdot (\sigma_{ij} \na \Phi) \cdot \rho_{ij} dx + \int_{\mathcal{O}}
 \na \cdot (\rho_{ij} \na \Phi) \cdot \sigma_{ij} dx
 \ee can be greatly simplified after expansion and integration by parts. We thus obtain exponentially decaying-in-time moment bounds (with a rate depending only on the diffusivities) for both $\rho_{ij}$ and $\sigma_{ij}$ in $L^2$ that are controlled by the exponential moment of the charge density $\rho$. We show that this latter expectation grows exponentially in time with a growth rate depending only on the deterministic and stochastic forcing $f$ and $g$ and the boundary data $\gamma_1, \dots, \gamma_N$. If these aforementioned parameters do not exceed the diffusivities of the species, the time growth gets beaten by the time decay, giving rise to appropriate moment bounds for $\|c_i - \gamma_i\|_{L^2}$. Consequently, the existence of invariant measures in this specific case is guaranteed by the Krylov Bogoliubov averaging procedure \cite{da2014stochastic}). We point out that this scenario is completely new and has not been treated previously in the literature neither from a deterministic nor from a stochastic point of view.
{Alternatively, when all species exhibit equal absolute valences $|z_1| = \dots = |z_N|$, it becomes possible to establish uniform quadratic moment bounds for $\|\nabla c_i\|_{L^2}$. In this scenario, the coupling of the $L^2$ evolutions of the density $\rho$ and the sum of the concentrations $\tilde{\rho} = c_1 + \dots + c_N$ results in cancellations in the nonlinear electromigration by which the sum
 \be
\int_{\mathcal{O}} \na \cdot (\tilde{\rho} \na \Phi) \rho dx + \int_{\mathcal{O}} \na \cdot (\rho \na \Phi) \tilde{\rho} dx
 \ee increases the dissipation of energy and yields an exponential decay in time of $\|\rho\|_{L^2}$. Consequently, this decay result in the desired bounds for $\|\nabla c_i\|_{L^2}$. This outcome allows us to deduce the existence of invariant measures for the Markov transition kernels without imposing any size conditions on the forces or the boundary data.} 
 
  We then investigate higher regularity properties of the invariant measure and show that it is supported on the Sobolev $H^2$ space (Theorem \ref{boundeddomainh2regularity}) by establishing the logarithmic moment bound 
 \be 
\beg{aligned}
\E \int_{0}^{T} \log \left(1 + \|u\|_{H^2}^2 + \sum\limits_{i=1}^{N} \|c_i - \gamma_i\|_{H^2}^2 \right) dt 
\le C_0\left(\|u_0\|_{H^1}, \|c_i - \gamma_i\|_{H^1}, \|g\|_{H^1} \right) 
+ C_1(\|f\|_{L^2}, \|g\|_{H^1}) T
\end{aligned}
 \ee that is at most linear in time. Such estimates are obtained via integration by parts, which turns out to be an obstacle to upgrading the $H^2$ regularity due to the boundary effects resulting from the ionic concentrations. 

Last but not least, we address the unique ergodicity in the stochastic S-NPNS system on $\mathcal{O}$ under Setting \ref{setting C} (Theorem \ref{uniquemeasure}). The answer to this question relies on the effects of the stochastic perturbation on the dynamics of the model. 
We make use of the asymptotic coupling techniques that have been widely studied in the literature and adapted to many nonlinear partial differential equations (see \cite{bricmont2002exponential,glatt2017unique,hairer2002exponential,hairer2011asymptotic,hairer2011theory,kuksin2001coupling,kuksin2002coupling,mattingly2003recent,weinan2001gibbsian} and references therein). Namely, we construct a copy of the S-NPNS system with a feedback control function, chosen in such a way that the long-time dynamics are fully determined by the low frequencies of the solutions. This approach usually requires the number of modes forcing the noise to be sufficiently large. As the Nernst-Planck equations are deterministic, we additionally need the diffusivities of the species to be large enough. A few difficulties arise from the nonlinear structure of the model and are dealt with using the uniform boundedness of the concentrations spatially in $H^1$ and timely in $L^2$ in this specific setting \ref{setting C}.

\smallskip
\noindent{\bf{Ergodicity on the Torus.}} Our next major result is the existence of smooth invariant measures for the transition kernels associated with the periodic S-NPNS system on the two-dimensional torus $\TT^2 = [0, 2\pi^2]$ with periodic boundary conditions in the most general setting \ref{setting A} where the ionic species have different diffusivities and valences (Theorem \ref{maintheoremonthetorus}). 
By simultaneously studying the stochastic evolution of the velocity $L^2$ norm, the potential $H^1$ norm, and the entropy
\be 
\mathcal{E} = \sum\limits_{i=1}^{N} \int_{\TT^2} \left(c_i \log \left(\fr{c_i}{\bar{c}_i} \right) - c_i + \bar{c}_i\right) dx,
\ee we derive quadratic moment bounds for the $L^2$ norms of the charge density and velocity gradient that are linear in time and exponential moment bounds for small constant multiples $\mu$ of $\|\rho\|_{L^2}^2$ with a time exponential growth depending on that parameter $\mu$. We then seek concentrations $L^2$  moment bounds whose long-time dynamics are controlled partially by the dissipation and partially by the electromigration effects arising from the potential gradient $\na \Phi$. To this end, we investigate the elliptic regularity obeyed by $\Phi$ and establish a new elliptic-interpolation inequality
\be 
\|\na \Phi\|_{L^4}^4 \le C\|\rho\|_{L^1}^2 \|\rho\|_{L^2}^2
\ee based on Fourier series techniques, a duality argument, the Hausdorff-Young inequality, the Marcinkiewicz interpolation theorem, and $L^p$ interpolation estimates (Proposition \ref{ellipticprop}). This good control of $\na \Phi$ allows us to obtain moment bounds of the form
\be 
\E \left(\sum\limits_{i=1}^{N} \|c_i - \bar{c}_i\|_{L^2}^2  \right)^p \le \left(\sum\limits_{i=1}^{N} \|c_i(0) - \bar{c}_i\|_{L^2}^2  \right)^p e^{-p \min \left\{D_1, \dots, D_N \right\} t} \E e^{C_2 \mu (\bar{c}_i) \left( \|\rho\|_{L^2}^2 + C_3\right)},
\ee where $\mu (\bar{c}_i)$ depends only on the averages $\bar{c}_i$. Due to the conservation of the spatial means of the concentrations, we can choose $\mu$ to be sufficiently small and obtain uniform-in-time moment bounds for the $L^2$ norms of concentrations and consequently of their gradients in $L_t^2 L_x^2$. In contrast with the case of a bounded domain with a smooth boundary where the ergodicity holds under some restrictions on the values of the diffusivities and valences or the size of the body forces, stochastic noise, and boundary data, we obtain ergodic invariant measures for the periodic S-NPNS system for large forcing and arbitrary parameters provided that the initial spatial averages of the ionic species are small. Moreover, any such invariant measure is smooth, a fact that follows from linear logarithmic moment bounds
\be 
\beg{aligned}
&\E \int_{0}^{T} \log \left(1 + \|u\|_{H^k}^2 + \sum\limits_{k=1}^{N} \|c_i - \bar{c}_i\|_{H^k}^2 \right) dt
\\&\quad\quad\quad\quad\le C_4 (\|u_0\|_{H^k}, \sum\limits_{i=1}^{N} \|c_i{(0)} - \bar{c}_i\|_{H^k}, \|g\|_{H^1}) + C_5(\|g\|_{H^k}, \|f\|_{H^k}) T
\end{aligned}
\ee that holds for any positive integer $k$. These estimates are based on fractional product estimates and expectations bounds on the product stochastic processes $\|c_i - \bar{c}_i\|_{L^2}^2\|\na c_i\|_{L^2}^2$ and $\|u\|_{L^2}^2 \|\na u\|_{L^2}^2$ in $L^2(0,T)$.  Due to periodicity, integration by parts applies, constraining the challenges to the nonlinear aspects.

\smallskip
\noindent{\bf{Exponential Ergodicity.}} Lastly, we consider two ionic species with equal diffusivities and valences $1$ and $-1$ (Setting \ref{setting D}) and study the exponential ergodicity of the resulting model (Theorem \ref{maintheoremtwospecies}). The two-species model has a special structure by which the nonlinear sum
\be 
\int_{\TT^2} (c_1 - c_2) \na  \cdot ((c_1 + c_2) \na \Phi) dx + \int_{\TT^2} (c_1 + c_2) \na \cdot ((c_1 - c_2) \na \Phi)dx  
\ee reduces to $-\|(c_1-c_2)\sqrt{c_1 + c_2} \|_{L^2}^2$, providing a structural understanding of the spatial $L^2$ evolutions of both $c_1$ and $c_2$ and yielding  an energy estimate of the form 
\be 
U(X_t) + C_6 \int_{0}^{t} S(X_s) ds
\le U(X_0) + C_7(f,g)t + C_8 (g,u)_{L^2} dW
\ee where $X_t = (u, c_1 - \bar{c}_1,  c_2 - \bar{c}_2)$, $U(\cdot) = \|\cdot\|_{L^2}^2$, and $S(\cdot) = \|\cdot\|_{H^1}^2$.
Moreover, we construct a stochastic process $Y_t = (U, C_1 - \bar{C}_1, C_2 - \bar{C}_2)$ such that the dissipativity bound 
\be 
q(X_t, Y_t) \le q(X_0, Y_0) e^{-C_9t + \int_{0}^{t} S(X_s) ds}
\ee holds. This construction is not trivial and does not hold for the N-species S-NPNS model due to the absence of crucial cancellation laws. This aforementioned pairing of the stochastic processes $X_t$ and $Y_t$ satisfies the generalized coupling framework established in \cite{butkovsky2020generalized}, from which we infer the exponential convergence of the transition kernels to the invariant measure in a suitable probability metric and obtain the exponential ergodicity of two-species S-NPNS model. 

\subsection{Organization of the Paper}
This paper is organized as follows. In  Section \ref{pre}, we introduce the basic functional spaces, operators, and notations that are frequently used throughout the whole manuscript. In Section \ref{sec:existence}, we construct probabilistic strong unique solutions to the S-NPNS model \eqref{1}--\eqref{4} for $L^2$ initial velocity and $L^2$ nonnegative initial ionic concentrations. Section \ref{lon} is dedicated to the exponential stability of the concentrations in $L^p$ spaces under a smallness size condition imposed on the $L^2$ norm of the initial concentrations. In Section \ref{fel}, we define the Markov semigroup associated with the stochastic S-NPNS system and obtain its Feller continuity. In Section \ref{erg}, we construct smooth unique ergodic invariant measures on bounded smooth domains for constant Dirichlet boundary data obeyed by the concentrations and under different conditions imposed on the size of the parameters, the forcing, or the boundary values. Section \ref{per} deals with the unique ergodicity of the periodic S-NPNS system based on novel elliptic-interpolation estimates derived in Appendix \ref{Marcin}. Finally, we consider the two-species model in Section \ref{two} and prove its exponential ergodicity based on the generalized coupling approach summarized in Appendix \ref{ExpErgFramework}.

\section{Preliminaries} \label{pre}

Let $\mathcal{O} \subset \R^2$ be a bounded domain with a smooth boundary. Throughout the paper, $C$ denotes a positive universal constant, and may change from step to step. For a letter $\mathcal{L}$, $\mathcal{L}(a,b,c,...)$ denotes a positive constant depending on $a$, $b$, $c, \dots$.

\smallskip

\noindent{\bf Functional Settings.}
For $1 \le p \le \infty$, we denote by $L^p(\mathcal{O})$ the Lebesgue spaces of measurable functions $f$ from $\mathcal{O}$ to $\R$ (or $\RR^2)$ such that 
\be 
\|f\|_{L^p} = \left(\int_{\mathcal{O}} \|f\|^p dx\right)^{1/p} <\infty, \;\text{if } p \in [1, \infty) \quad \text{and} \quad 
\|f\|_{L^{\infty}} = {{\esssup}}_{\mathcal{O}}  |f| < \infty, \; \text{if } p = \infty.
\ee
The $L^2$ inner product is denoted by $(\cdot,\cdot)_{L^2}$.

For $k \in \NN$, we denote by $H^k(\mathcal{O})$ the classical Sobolev space of measurable functions $f$ from $\mathcal{O}$ to $\R$ (or $\RR^2)$ with weak derivatives of order $k$ such that  
$ 
\|f\|_{H^k}^2 = \sum\limits_{|\alpha| \le k} \|D^{\alpha}f\|_{L^2}^2 < \infty.
$  The space $H_{0}^{1}(\mathcal{O})$ refers to the subspace of $H^1(\mathcal{O})$ consisting of functions with homogeneous Dirichlet boundary conditions.

For a Banach space $(X, \|\cdot\|_{X})$ and $p\in [1,\infty]$, we consider the Lebesgue spaces $ L^p(0,T; X)$ of functions $f$  from $X$ to $\R$ (or $\RR^2)$ satisfying 
$
\int_{0}^{T} \|f\|_{X}^p dt  <\infty
$ with the usual convention when $p = \infty$. The corresponding norm will be denoted by $\|\cdot\|_{L^p(0, T; X)}$ or abbreviated as $\|\cdot\|_{L_t^p X}$. 

\smallskip

\noindent{\bf The Stokes Operator.} Let $H$ be
\be 
H = \left\{v = (v_1, v_2)\in L^2(\mathcal{O}): \na \cdot v = 0, \, v \cdot n|_{\pa \mathcal{O}} = 0 \right\}
\ee 
where $n$ is the outward unit normal to $\pa \mathcal{O}$, and denote by $\mathcal{P}: L^2(\mathcal{O}) \rightarrow H$ the Leray Hodge projection onto $H$. We define the Stokes operator, denoted by $A$, on $H \cap H_0^1(\mathcal{O}) \cap H^2(\mathcal{O})$ as $A := -\mathcal{P} \Delta$. Denote the eigenvalues of $A$ by $\mu_j$ with $j\in \mathbb N$, and the corresponding eigenfunctions by $\phi_j$. 
{
By the standard spectral theorem (see, for example \cite{constantin1988navier}), one has $0 < \mu_1 \le ... \le \mu_j \le ... \rightarrow \infty$.
}
The fractional powers of the Stokes operator, denoted by $A^s$, are defined by 
\be \la{stoeig}
A^s v= \sum_{j=1}^{\infty} \mu_j^s (v, \phi_j)_{L^2} \phi_j,\quad
\text{with domain}\quad 
\mathcal{D}(A^s) = \left\{v \in H : \|A^{s} v\|_{L^2}^2 := \sum\limits_{j \in \N} \mu_j^{2s}(v, \phi_j)_{L^2}^2 < \infty \right\}.
\ee 
We define the bilinear form $B$ by
$
B(u,v) = \mathcal{P}(u \cdot \na v)
$ for any $u, v \in H \cap H_0^1$.  

\smallskip

\noindent{\bf Periodic Fractional Powers of the Laplacian.}  Let $\TT^2 = [0,2\pi]^2$ be the two dimensional torus.
For $s\in\R$, the periodic fractional Laplacian $\l^s$ applied to a mean zero function $f \in L^2(\TT^2)$ is a Fourier multiplier with symbol $|k|^s$. That is, for $f$ with Fourier series representation
\be
f(x) = \sum\limits_{k \in \mathbb{Z}^2 \setminus \left\{0\right\}} f_k e^{ik \cdot x},
\text{ and obeying }
\sum\limits_{k \in \mathbb{Z}^2 \setminus \left\{0\right\}} |k|^{2s} |f_k|^2 < \infty,
\ee
we have
\be
\l^s f(x) = \sum\limits_{k \in \mathbb{Z}^2 \setminus \left\{0\right\}} |k|^s f_k e^{ik \cdot x}.
\ee 

\smallskip

\noindent{\bf Stochastic Settings.} We denote by $(\Omega, \mathcal{F}, \mbF, \PP)$ a filtered probability space and $\mathbb{F} =\left\{ \mathcal{F}_s\right\}_{s \ge 0}$ be a filtration on $(\Omega, \mathcal{F}, \PP)$, supporting independent real-valued, standard Brownian motions, denoted by $\{W_k\}_{k \ge 0}$. The stochastic noise term $gdW$ appearing in the S-NPNS system is 
interpreted as
\be 
g dW = \sum\limits_{k = 1}^{\infty} g_k(x) dW_k(t),
\ee  
where the components $g_k$ are assumed to be in $\mathcal{D}(A^{\fr{1}{2}})$. For an integer $k \ge 0$, we denote 
\be 
\|g\|_{H^k}^2 = \sum\limits_{l=1}^{\infty} \| g_l \|_{H^k}^2,
\text{ for any } g \in \mathcal D(A^{\frac k2}). 
\ee

\section{Global Well-posedness of the S-NPNS System}\label{sec:existence}

\subsection{Local Well-posedness}

We consider the S-NPNS system
\noeqref{s212}
\begin{subequations}\label{sys:SNPNS}
    \begin{align}
       &d u + (u \cdot \na u - \Delta u + \na p) dt = (-\rho \na \Phi + f) dt + g dW, \la{s21}
       \\
       &\pa_t c_i + u \cdot \na c_i - D_i \Delta c_i = z_iD_i \na \cdot (c_i \na \Phi), \text{ for } i = 1, \dots, N, \la{s210}
       \\
       &-\Delta \Phi = \rho, \la{s211}
       \\
       &\na \cdot u = 0 \la{s212}
    \end{align}
\end{subequations}
in a bounded domain $\mathcal{O}$ with a smooth boundary $\pa \mathcal{O}$, equipped with the boundary conditions \eqref{3} and initial conditions \eqref{4}. The body forces $f\in H$ and $g\in \mathcal D(A^{\frac12})$ are assumed to be time-independent and divergence-free. By projecting equation \eqref{s21} onto the space of divergence-free vectors, we observe that \eqref{s21} is equivalent to  
\be \la{s22}
d u + (B(u,u)  + Au) dt = (-\mathcal{P}(\rho \na \Phi) + f) dt + g dW.
\ee

\beg{Thm} \la{loc} 

Let  $u_0 \in H$ and $c_i(0) \in L^2$ for all $i \in \left\{1, \dots, N\right\}$. There exists a time $T_0 > 0$ depending only on the size of the initial data in $L^2$ and the parameters of the problem, such that the initial boundary value problem determined by the system \eqref{sys:SNPNS} and the boundary conditions \eqref{3} has a unique weak solution $(u, c_1, \dots, c_N)$ on $[0, T_0]$ obeying  
\be 
u \in L^{\infty} (0, T_0; H) \cap L^2(0,T_0; \mathcal{D}(A^{\frac{1}{2}})),
\ee  
and
\be 
c_i \in L^{\infty}(0,T_0; L^2(\mathcal{O})) \cap L^2(0, T_0; H^1(\mathcal{O})),
\ee 
{almost surely} for any $i \in \left\{1, \dots, N \right\}$.
\end{Thm}

\begin{proof}
We divide the proof into four steps.
\smallskip

\noindent{\bf{Step 1. The iterative system.}}
 Let $u^{(0)} = c_1^{(0)} = \dots = c_N^{(0)} = 0$. We consider the iterative system 
 \noeqref{sys:finite-SNPNS-1}
  \noeqref{sys:finite-SNPNS-2}
   \noeqref{sys:finite-SNPNS-3}
    \noeqref{sys:finite-SNPNS-4}
 \begin{subequations}\label{sys:finite-SNPNS}
  \begin{align}
    &d u^{(m)} + (B(u^{(m)},u^{(m)})  + Au^{(m)}) dt = (-\mathcal{P}(\rho^{(m-1)} \na \Phi^{(m-1)}) + f) dt + g dW, \label{sys:finite-SNPNS-1}
    \\
    &\pa_t c_i^{(m)} + u^{(m)} \cdot \na c_i^{(m)} - D_i \Delta c_i^{(m)} = z_iD_i \na \cdot (c_i^{(m)} \na \Phi^{(m-1)}), \quad i = 1, \dots, N, \label{sys:finite-SNPNS-2}
    \\
    &-\Delta \Phi^{(m)} = \rho^{(m)} = \sum\limits_{i=1}^{N} z_ic_i^{(m)}, \label{sys:finite-SNPNS-3}
    \\
    &\na \cdot u^{(m)} = 0, \label{sys:finite-SNPNS-4}
 \end{align}   
 \end{subequations}
for each integer $m \ge 1$, with initial conditions
\be 
u^{(m)} (x,0) = u(x,0), \; c_i^{(m)} (x,0) = c_i(x,0) \label{IC-finite}
\ee 
for $i = 1, ..., N$, and boundary conditions 
\begin{equation}\label{BC-finite}
\begin{split}
    &u^{(m)}|_{\pa \mathcal{O}}  = 0, \quad \Phi^{(m)}|_{\pa \mathcal{O}} = \gamma,
    \\
    &c_i^{(m)}|_{\pa \mathcal{O}} = \gamma_i \quad \text{for} \quad i= 1, \dots, M,
    \\
    &\left(\nabla c_i^{(m)} + z_i c_i^{(m)} \na \Phi^{(m-1)}\right)|_{\pa \mathcal{O}} \cdot n = 0 \quad \text{for} \quad  i = M+1, \dots, N.
\end{split}   
\end{equation}
Denote by
\be \la{cov}
G(x,t,\omega) = \int_{0}^{t} e^{(t-s)A} g(x) dW \quad \text{and} \quad v^{(m)}(x,t,\omega) = (u^{(m)} - G)(x,t,\omega).
\ee  
Then we can rewrite system \eqref{sys:finite-SNPNS} as the following deterministic system 
\begin{subequations}\label{sys:finite-deter}
    \begin{align}
&\pa_t v^{(m)} + B(v^{(m)}+G, v^{(m)}+G) - { Av^{(m)}} = -\mathcal{P}(\rho^{(m-1)} \na \Phi^{(m-1)})+ f, \la{th11}
\\
&\pa_t c_i^{(m)}+ (v^{(m)} + G) \cdot \na c_i^{(m)} - D_i \Delta c_i^{(m)} =  z_i D_i \na \cdot (c_i^{(m)} \na \Phi^{(m-1)}), \quad i = 1, \dots, N, \la{th12}
\\
&-\Delta \Phi^{(m)} = \rho^{(m)} = \sum\limits_{i=1}^{N} z_i c_i^{(m)}, \la{th13}
\\
&\na \cdot v^{(m)} = 0, \la{th14}
    \end{align}
\end{subequations}
with the same initial and boundary conditions \eqref{IC-finite} and \eqref{BC-finite}, except the conditions for $u^{(m)}$ is replaced by
\[
v^{(m)} (x,0) = u(x,0), \quad v^{(m)}|_{\pa \mathcal{O}}  = 0.
\]
The homogeneous Dirichlet boundary conditions obeyed by $v^{(m)}$ arise from the vanishing of both $u^{(m)}$ and $G$ on $\pa \mathcal{O}$.

{
Fix $m \ge 1$ and suppose $v^{(m-1)}, c_1^{(m-1)}, \dots, c_N^{(m-1)}$ are given. As $v^{(m)}$ is determined in terms of $c_1^{(m-1)}$, $\dots$, $c_N^{(m-1)}$, it is evident that \eqref{th12} is linear in $c_i^{(m)}$ for any $i \in \left\{1, \dots, N\right\}.$}
By making the change of variable
\be \la{th20}
\tilde{c_i}^{(m)} = c_i^{(m)} e^{z_i \Phi^{(m-1)}},
\ee 
for $i= M+1, \dots, N$,  we observe that $\tilde{c_i}^{(m)}$ also obeys a linear equation 
equivalent to \eqref{th12}, with homogeneous Neumann boundary conditions. 
{
Consequently, we can view the equations obeyed by the concentration approximants as linear equations equipped with either Dirichlet or Neumann boundary conditions, which allows us to justify the solvability of the approximating model in hand.} Indeed, for $m=1$, the Navier-Stokes equation \eqref{th11} with the divergence-free condition \eqref{th14} has global solutions in $L^{\infty}(0,T; H)$ and $L^2(0,T; \mathcal{D}(A^{\fr{1}{2}}))$ for any $T>0$. In view of the observation associated with \eqref{th20}, the linear equation \eqref{th12} obeyed by the concentration approximant $c_i^{(1)}$ also has global solutions in  $L^{\infty}(0,T; L^2(\mathcal{O}))$ and $L^2(0,T; H^1(\mathcal{O}))$ for any $T>0$.  Suppose the $(m-1)$-th solution exists and obeys the same aforementioned regularity. Given the $(m-1)$-th regular electric forces, the $m$-th Navier-Stokes system has global smooth solutions, and so do the linear parabolic $m$-th Nernst-Planck equations. This iterative argument allows us to conclude that for each fixed integer $m \ge 1$, the system \eqref{sys:finite-deter} has global regular solutions. Next, we derive \textit{a priori} bounds which is uniform in $m$. 
\smallskip

\noindent \textbf{Step 2. \textit{A priori} $L^2$ estimate of $c_i^{(m)}$.}
First, we consider an fixed index $i \in \left\{M+1, ..., N \right\}$, and take the $L^2$ inner product of the equation \eqref{th12} obeyed by the approximant $c_i^{(m)}$ with $c_i^{(m)}$. The nonlinear term $((v^{(m)} + G) \cdot \na c_i^{(m)}, c_i^{(m)})_{L^2}$ vanishes due to the divergence-free condition and Dirichlet boundary condition obeyed by both $v^{(m)}$ and $G$. 
This gives rise to the energy equality
\be \la{th21}
\beg{aligned} 
&\fr{1}{2} \fr{d}{dt} \|c_i^{(m)}\|_{L^2}^2 + D_i \|\na c_i^{(m)}\|_{L^2}^2 - \int_{\pa \mathcal{O}} D_i c_i^{(m)} \na c_i^{(m)} \cdot n d\sigma(x)
\\&\quad\quad= - \int_{\mathcal{O}} D_i z_i c_i^{(m)} \na \Phi^{(m-1)} \cdot \na c_i^{(m)} dx
+ \int_{\pa \mathcal{O}} D_i z_i c_i^{(m)} \na \Phi^{(m-1)} c_i^{(m)} \cdot n d\sigma(x)
\end{aligned}
\ee 
after an integration by parts. Here $d\sigma$ denotes the surface measure. Making use of the boundary conditions \eqref{BC-finite} brings the following cancellation
\be 
\int_{\pa \mathcal{O}} D_i c_i^{(m)} \na c_i^{(m)} \cdot n d\sigma(x) + \int_{\pa \mathcal{O}} D_i z_i c_i^{(m)} \na \Phi^{(m-1)} c_i^{(m)} \cdot n d\sigma(x) = 0.
\ee
We estimate the nonlinear forcing term 
\be \la{th22}
\beg{aligned}
&\left|\int_{\mathcal{O}} D_i z_i c_i^{(m)} \na \Phi^{(m-1)} \cdot \na c_i^{(m)} dx \right|
\le C\|c_i^{(m)}\|_{L^4} \|\na \Phi^{(m-1)}\|_{L^4} \|\na c_i^{(m)}\|_{L^2}
\\&\quad\quad\le C_{\gamma}\left(\|c_i^{(m)}\|_{L^2} + \|c_i^{(m)}\|_{L^2}^{\fr{1}{2}} \|\na c_i^{(m)}\|_{L^2}^{\fr{1}{2}} \right) \left(\|\rho^{(m-1)}\|_{L^2} + 1 \right)\|\na c_i^{(m)}\|_{L^2}
\\&\quad\quad\le \fr{D_i}{2} \|\na c_i^{(m)}\|_{L^2}^2 
+ C_{\gamma}\|c_i^{(m)}\|_{L^2}^2 \left(\|\rho^{(m-1)}\|_{L^2}^4 + 1 \right)
\\&\quad\quad\le \fr{D_i}{2} \|\na c_i^{(m)}\|_{L^2}^2  + C\left[\left(\sum\limits_{j=1}^{N} \|c_j^{(m-1)}\|_{L^2}^2\right)^2 + 1 \right]\|c_i^{(m)}\|_{L^2}^2 
\end{aligned}
\ee 
by making use of the Ladyzhenskaya inequality, the elliptic regularity obeyed by the solution $\Phi^{(m-1)}$ to the Poisson equation \eqref{th13}, and Young's inequality for products. We sum the energy equalities \eqref{th21} over all indices $i \in \left\{M+1, ..., N\right\}$ and obtain the differential inequality 
\be \la{th23}
\beg{aligned}
&\fr{d}{dt} \left(\sum\limits_{i=M+1}^{N} \|c_i^{(m)}\|_{L^2}^2 \right) + \sum\limits_{i=M+1}^{N} D_i \|\na c_i^{(m)}\|_{L^2}^2 
\le C\left[\left(\sum\limits_{j=1}^{N} \|c_j^{(m-1)}\|_{L^2}^2\right)^2 + 1 \right] \sum\limits_{i=M+1}^{N} \|c_i^{(m)}\|_{L^2}^2
\end{aligned}
\ee 
by appealing to \eqref{th22}. As for the ionic concentrations with inhomogeneous constant Dirichlet boundary conditions, we fix an integer $i \in \left\{1, \dots, M\right\}$ and take the scalar product in $L^2$ of the equation \eqref{th12} obeyed by $c_i^{(m)}$ with $c_i^{(m)} - \gamma_i$. We obtain the energy evolution
\be \la{th24}
\beg{aligned} 
\fr{1}{2} \fr{d}{dt} \|c_i^{(m)} - \gamma_i\|_{L^2}^2 + D_i \|\na c_i^{(m)}\|_{L^2}^2 
= - \int_{\mathcal{O}} D_i z_i c_i^{(m)} \na \Phi^{(m-1)} \cdot \na c_i^{(m)} dx,
\end{aligned}
\ee 
which boils down to 
\be \la{th25}
 \fr{d}{dt} \|c_i^{(m)} - \gamma_i\|_{L^2}^2 + D_i \|\na c_i^{(m)}\|_{L^2}^2 
\le C\left[\left(\sum\limits_{j=1}^{N} \|c_j^{(m-1)}\|_{L^2}^2\right)^2 + 1 \right]\|c_i^{(m)}\|_{L^2}^2 
\ee 
due to the estimate \eqref{th22} and the independency of the constants $\gamma_i$ on both the spatial and time variables. Adding these latter $M$ inequalities for $i \in \left\{1, \dots, M \right\}$, we end up with 
\be 
\fr{d}{dt} \left(\sum\limits_{i=1}^{M} \|c_i^{(m)} - \gamma_i\|_{L^2}^2\right) + \sum\limits_{i=1}^{M} D_i \|\na c_i^{(m)}\|_{L^2}^2 
\le C\left[\left(\sum\limits_{j=1}^{N} \|c_j^{(m-1)}\|_{L^2}^2\right)^2 + 1 \right]\sum\limits_{i=1}^{M} \|c_i^{(m)}\|_{L^2}^2.
\ee 
Therefore, we deduce that
\be \la{th26}
\beg{aligned}
&\fr{d}{dt} \left(\sum\limits_{i=1}^{M} \|c_i^{(m)} - \gamma_i\|_{L^2}^2 + \sum\limits_{i=M+1}^{N} \|c_i^{(m)}\|_{L^2}^2 \right) 
+ \sum\limits_{i=1}^{N} D_i \|\na c_i^{(m)}\|_{L^2}^2 
\\&\le C\left[\left(\sum\limits_{i=1}^{N} \|c_i^{(m-1)}\|_{L^2}^2\right)^2 + 1 \right] \sum\limits_{i=1}^{N} \|c_i^{(m)}\|_{L^2}^2
\\&\le C \left[\left(\sum\limits_{i=1}^{M} \|c_i^{(m-1)} - \gamma_i\|_{L^2}^2 + \sum\limits_{i=M+1}^{N} \|c_i^{(m-1)}\|_{L^2}^2 \right)^2 + 1\right] \left(\sum\limits_{i=1}^{M} \|c_i^{(m)} - \gamma_i\|_{L^2}^2 + \sum\limits_{i=M+1}^{N} \|c_i^{(m)}\|_{L^2}^2 \right) 
\\&\quad\quad\quad\quad+ C\left[\left(\sum\limits_{i=1}^{M} \|c_i^{(m-1)} - \gamma_i\|_{L^2}^2 + \sum\limits_{i=M+1}^{N} \|c_i^{(m-1)}\|_{L^2}^2 \right)^2 + 1 \right],
\end{aligned}
\ee 
where $C$ is a positive constant depending on $\gamma$ and $\gamma_i$ for $i \in \left\{1, \dots, M\right\}$, the parameters of the problem, and some universal constants. 
We define the time-dependent sequence $a_m(t)$ by 
\be  
a_m(t) = \sum\limits_{i=1}^{M} \|c_i^{(m)} - \gamma_i\|_{L^2}^2 + \sum\limits_{i=M+1}^{N} \|c_i^{(m)}\|_{L^2}^2, 
\ee 
and note that $a_m$ satisfies the ODE
\be 
\fr{d}{dt} a_m(t) \le C(a_{m-1}^2 +1)a_m + Ca_{m-1}^2 + C,
\ee 
from which we obtain the bound
\be 
a_m(t) \le \left(a_m(0) + C\int_{0}^{t} a_{m-1}^2 dt + Ct \right) \exp \left\{C\int_{0}^{t} (a_{m-1}^2 +1) ds \right\}.
\ee 
Taking the supremum over the time interval $[0,T]$ produces
\be 
\beg{aligned}
A_m(t) 
&\le \left(a_m(0) + CA_{m-1}^2T + CT\right) \exp \left\{C(A_{m-1}^2 +1)T \right\} 
\le \exp \left\{a_m(0) + CA_{m-1}^2T + CT \right\},
\end{aligned}
\ee 
where 
$
A_m(T) = \sup\limits_{0 \le t \le T} a_m(t).
$
Since $a_m(0)$ does not depend on $m$ and obeys $a_m(0) = a_1(0) = A_1(0)$, we infer that
\be 
A_m(T) \le \exp \left\{A_1(0) + C_0A_{m-1}^2T + C_0T \right\}
\ee 
for any $T > 0$. Here $C_0> 0$ depends on the boundary values, parameters of the problem, and universal constants. An induction argument gives the uniform-in-$m$ bound
$
A_m(T) \le e^{3A_1(0)},
$ 
provided that 
\be 
0 < T \le T_0:= \min\left\{\fr{A_1(0)}{C_0}, \fr{A_1(0)}{C_0 A_0^2}, \fr{A_1(0)}{C_0e^{6A_1(0)}} \right\}.
\ee 
Indeed, 
\be 
A_1(T) \le \exp \left\{A_1(0) + C_0A_0^2T + C_0T \right\}
\le \exp \left\{3A_1(0)\right\}
\ee 
for any $T \in [0, T_0]$. If $A_m(T) \le \exp \left\{3A_1(0)\right\}$, then 
\be 
A_{m+1}(T) \le \exp \left\{A_1(0) + C_0e^{6A_1(0)}T + C_0T \right\}
\le \exp \left\{3A_1(0)\right\}
\ee for any time $T \in [0,T_0].$ 
Integrating \eqref{th26} in time from $0$ to $T_0$, we have the local-in-time integrability of the spatial $H^1$ norm of the concentration approximants
\be 
\int_{0}^{T_0} \sum\limits_{i=1}^{N} \|\na c_i^{(m)}(t)\|_{L^2}^2 dt \le \Gamma_0,
\ee
where $\Gamma_0$ is a positive constant depending on the initial data and the parameters of the model.

Consequently, the family of approximants $\left\{c_i^{(m)} \right\}_{m=1}^{\infty}$ is uniformly bounded in the Lebesgue spaces $L^{\infty}(0, T_0; L^2(\mathcal{O}))$ and $L^2(0,T_0; H^1(\mathcal{O}))$ a.s. for all $i \in \left\{1, \dots, N \right\}$.

\medskip

\noindent \textbf{Step 3. \textit{A priori} $L^2$ estimate of $v^{(m)}$.}
We take the scalar product in $L^2$ of the velocity equation in \eqref{th11} obeyed by the approximants $v^{(m)}$ with $v^{(m)}$ and obtain the energy equality
\be \la{s250}
\beg{aligned}
&\fr{1}{2} \fr{d}{dt} \|v^{(m)}\|_{L^2}^2
+ \|A^{\fr{1}{2}} v^{(m)}\|_{L^2}^2
\\&\quad\quad=  - (B(v^{(m)} + G, v^{(m)} + G), v^{(m)})_{L^2}
+ (f, v^{(m)})_{L^2}
- (\mathcal{P}(\rho^{(m-1)}\na \Phi^{(m-1)}), v^{(m)})_{L^2}.
\end{aligned}
\ee 
In view of the $L^2$ cancellation law
\be 
(B(v^{(m)} + G,v^{(m)} + G), v^{(m)} + G)_{L^2} = 0,
\ee 
the self-adjoitness of the Leray projector $\mathcal{P}$, and the divergence-free condition obeyed by both $v^{(m)}$ and $G$, we can rewrite the nonlinear term in $v^{(m)}$ as 
\be 
\beg{aligned}
&- (B(v^{(m)} + G, v^{(m)} + G), v^{(m)})_{L^2}
= (B(v^{(m)} + G, v^{(m)} + G), G)_{L^2}
\\&= ((v^{(m)} + G) \cdot \na (v^{(m)} + G), \mathcal{P} G)_{L^2}
= ((v^{(m)} + G) \cdot \na (v^{(m)} + G), G)_{L^2}
\\&= - ((v^{(m)} + G) \cdot \na G, v^{(m)} + G)_{L^2},
\end{aligned}
\ee 
and estimate using the Cauchy-Schwarz inequality, the Ladyzhenskaya interpolation inequality, and Young's inequality for products as follows, 
\be 
\beg{aligned}
&| (B(v^{(m)} + G, v^{(m)} + G), v^{(m)})_{L^2}|
\le \|v^{(m)} + G\|_{L^4}^2 \|\na G\|_{L^2}
\\&\le C\|v^{(m)} + G\|_{L^2}\|\na (v^{(m)} + G)\|_{L^2} \|\na G\|_{L^2}
\\&\le \fr{1}{8} \|A^{\fr{1}{2}} v^{(m)}\|_{L^2}^{2}
+ C\|\na G\|_{L^2}^2 \left(\|v^{(m)}\|_{L^2}^2 + \|G\|_{L^2}^2  \right) + C\|G\|_{L^2}^2 \|\na G\|_{L^2}.
\end{aligned}
\ee  
As for the nonlinear term in $\rho^{(m-1)}$, we have 
\be \la{s26}
\beg{aligned}
&| (\mathcal{P}(\rho^{(m-1)} \na \Phi^{(m-1)}), v^{(m)})_{L^2}|
= | (\rho^{(m-1)} \na (\Phi^{(m-1)} - \gamma), v^{(m)})_{L^2}|
\\&\le \|\rho^{(m-1)}\|_{L^2} \|\na (\Phi^{(m-1)} - \gamma)\|_{L^4} \|v^{(m)}\|_{L^4}
\\&\le \|\rho^{(m-1)}\|_{L^2}^2 \|\na v^{(m)}\|_{L^2}
\le \fr{1}{8} \|A^{\fr{1}{2}} v^{(m)}\|_{L^2}^2 
+ \|\rho^{(m-1)}\|_{L^2}^4.
\end{aligned}
\ee 
Here, we have applied H\"older's inequality with exponents $2,4,4$, the Sobolev inequality, and took advantage of the elliptic regularity of solutions to the Poisson equation \eqref{th13}. Therefore, the  equation \eqref{s250} yields to the differential inequality
\be \la{th27}
\beg{aligned}
&\fr{d}{dt} \|v^{(m)}\|_{L^2}^2
+ \|A^{\fr{1}{2}}v^{(m)}\|_{L^2}^2
\\&\quad\quad\le C\|\na G\|_{L^2}^2 \|v^{(m)}\|_{L^2}^2 
+ C\left(\sum\limits_{i=1}^{N} \|c_i^{(m-1)}\|_{L^2}^4\right)  
+ C\|G\|_{L^2}^2 \|A^{\fr{1}{2}} G\|_{L^2}
+ C\|G\|_{L^2}^2 \|A^{\fr{1}{2}} G\|_{L^2}^2 
+ C\|f\|_{L^2}^2.
\end{aligned}
\ee 
From \eqref{th27} and the regularity of the concentrations approximants on the time interval $[0,T_0]$, we infer that the velocity approximants $v^{(m)}$ lie in the spaces $L^{\infty}(0, T_0; L^2(\mathcal{O}))$ and $L^2(0, T_0; H^1(\mathcal{O}))$ a.s.. Moreover, the bounds on $v^{(m)}$ are uniform in $m$.
\medskip

\noindent\textbf{Step 4. Local existence and uniqueness of solutions.}
Finally, we apply the Aubin-Lions lemma and obtain a subsequence of the family {$(v^{(m)}, c_1^{(m)}, \dots, c_N^{(m)})$ that converges to a weak solution $(v, c_1, \dots, c_N)$ of the system \eqref{sys:finite-deter} on the time interval $[0,T_0]$. Define $u=v+G$. As $G$ is smooth, we infer that $(u, c_1, \dots, c_N)$ is a weak solution of system \eqref{sys:SNPNS} on the time interval $[0,T_0]$. 
}
As for uniqueness, the proof follows along the lines of Proposition \ref{cont} and will be omitted to avoid redundancy. 
\end{proof}


\subsection{Extension to the Global Solution}

Starting with nonnegative initial concentrations, we present the following proposition regarding the maintenance of the nonnegativity of the concentrations of each ionic species at all times.

\beg{prop} \la{prop1} 
Let $(u, c_1, \dots, c_N)$ be the unique weak solution of the problem \eqref{sys:SNPNS} with boundary conditions \eqref{3} on the time interval $[0,T]$. If the initial concentration $c_i(0)$ is nonnegative, then $c_i(x,t) \ge 0$ for a.e. $x \in \mathcal{O}$ and $t \in [0,T]$.
\end{prop}

The proof of Proposition \ref{prop1} can be found in \cite[Section 5]{constantin2019nernst} and is based on the regularity property
\be \la{prop12}
\int_{0}^{T} \|\na \Phi(t)\|_{L^{\infty}}^2 dt < \infty
\ee obeyed by the potential $\Phi$. Indeed, the integrability condition \eqref{prop12} holds for weak solutions on $[0,T]$ due to the elliptic regularity $\|\na \Phi(t)\|_{L^{\infty}} \le C\|\rho\|_{L^4}$ gained from the Poisson equation \eqref{s211}. 

The nonnegativity of the ionic concentrations allows us to obtain uniform-in-time bounds on any time interval, as demonstrated in the following proposition.

\beg{prop} \la{ext} 
 Let $T>0$ be arbitrary. Suppose $(u, c_1, \dots, c_N)$ is the unique weak solution of the problem \eqref{sys:SNPNS} with boundary conditions \eqref{3} on the time interval $[0,T]$. If $c_i(x,t) \ge 0$ for a.e. $x \in \mathcal{O}$ and $t \in [0,T]$, then there exists a positive continuous increasing  function in $T$, denoted by $K=K(T)$, which depends on the size of the initial data in $L^2$ norm, the boundary data, the noise $g$, the body forces $f$, and the parameters of the problem,
such that 
\be 
\sup\limits_{0 \le t \le T} \left[\|u(t)\|_{L^2}^2 + \sum\limits_{i=1}^{N} \|c_i(t)\|_{L^2}^2 \right] + \int_{0}^{T} \left[\|\na u(t)\|_{L^2}^2 + \sum\limits_{i=1}^{N} \|\na c_i(t)\|_{L^2}^2  \right] dt\le K.
\ee 
\end{prop}

    The proof of Proposition \ref{ext} is analogue to \cite{constantin2019nernst} and is therefore omitted.
   Based on Proposition \ref{ext}, we are able to obtain the following theorem concerning the global existence of a unique weak solution. 

\beg{Thm} \la{glos} Let $T>0$ be arbitrary, and suppose that $u_0 \in H$ and $c_i(0) \in L^2$ for all $i \in \left\{1, \dots, N\right\}$ such that $c_i(0) \ge 0$. Then the initial boundary value problem determined by the system  \eqref{sys:SNPNS} and the boundary conditions \eqref{3} has a unique weak solution $(u, c_1, \dots, c_N)$ on $[0,T]$ obeying  
\be 
u \in L^{\infty} (0, T; H) \cap L^2(0,T; \mathcal{D}(A^{\frac{1}{2}}))
\ee  and
\be 
c_i \in L^{\infty}(0,T; L^2(\mathcal{O})) \cap L^2(0, T; H^1(\mathcal{O})) 
\ee for any $i \in \left\{1, \dots, N \right\}$ almost surely .
\end{Thm}

\begin{proof}
 The existence of a local weak solution on a time interval $[0, T_0]$ is guaranteed by Theorem \ref{loc}. Since the initial concentrations are assumed to be nonnegative, it follows from Proposition \ref{prop1} that the concentrations stay nonnegative for all times $t \in [0,T_0]$. Having this property in hand, we can apply Proposition \ref{ext} to conclude that the weak solution is uniformly bounded at the time $T_0$, the fact that allows us to repeat the argument of Theorem \ref{loc} and extend the local solution from $[0,T_0]$ into $[0, T_1]$ for some time $T_1 > T_0$. Due to Propositions \ref{prop1} and \ref{ext}, we deduce that the solution behaves nicely at $T_1$. {We keep repeating the same 
 argument and obtain a sequence of times $T_n$ such that $\left\{T_n\right\}_{n=1}^{\infty}$ is increasing and the system \eqref{sys:SNPNS} has a weak solution on $[0,T_n]$ with the property that 
 \be \la{sequence}
\sup\limits_{0 \le t \le T_n} \left[\|u(t)\|_{L^2}^2 + \sum\limits_{i=1}^{N} \|c_i(t)\|_{L^2}^2 \right] + \int_{0}^{T_n} \left[\|\na u(t)\|_{L^2}^2 + \sum\limits_{i=1}^{N} \|\na c_i(t)\|_{L^2}^2  \right] dt\le K_n
 \ee where $K_n$ is also increasing in time. Suppose that $T_n$ converges to some $T'$ with $T'< T$. As the sequences involved in \eqref{sequence} are increasing and bounded, they converge and consequently, it holds that
 \be 
\sup\limits_{0 \le t < T'} \left[\|u(t)\|_{L^2}^2 + \sum\limits_{i=1}^{N} \|c_i(t)\|_{L^2}^2 \right] + \int_{0}^{T'} \left[\|\na u(t)\|_{L^2}^2 + \sum\limits_{i=1}^{N} \|\na c_i(t)\|_{L^2}^2  \right] dt\le K
 \ee where $K$ depends on $T'$. This latter uniform boundedness allows us to deduce that the solution can be uniquely continued after $T'$ until it reaches the desired arbitrary time $T$. }
\end{proof}

\section{\texorpdfstring{$L^p$}{} Regularity and Asymptotic Behavior of the Ionic Concentrations} \label{lon}

In this section, we address the long-time behavior and the $L^p$ regularity of the ionic concentrations solving the system \eqref{sys:SNPNS} in $\mathcal{O}$ with the mixed boundary conditions \eqref{3}. We first state and prove the following theorem concerning the $L^2$ long-time behavior of the global unique weak solution constructed in the previous section. 

\beg{Thm} \la{longt} 

Let $u_0 \in H$, and $c_i(0) \in L^2$ be nonnegative for each $i \in  \left\{1, \dots, N\right\}$.  Suppose that the relation \eqref{44}
holds. 
 There exists a positive constant $\varepsilon$ depending only on the diffusivities and valences such that if 
\be  \la{SC}
\sum\limits_{i=1}^{M} \left[\|c_i(0) - \gamma_i\|_{L^2}^2 + \gamma_i\right]
+ \sum\limits_{i=M+1}^{N} \left[\|c_i(0) - \bar{c}_i\|_{L^2}^2 
+ \bar{c}_i\right] < \varepsilon ,
\ee
then there exists a positive constant $c>0$ depending only on the size of the domain $\mathcal{O}$, such that the unique global weak solution satisfies, for any time $t \ge 0$
\be \la{longt3}
\beg{aligned}
&\sum\limits_{i=1}^{M} \|c_i(t) - \gamma_i\|_{L^2}^2 
+ \sum\limits_{i=M+1}^{N} \|c_i(t)- \bar{c}_i\|_{L^2}^2
\\&\quad\quad\quad\quad\le \left[\sum\limits_{i=1}^{M} \|c_i(0) - \gamma_i\|_{L^2}^2 
+ \sum\limits_{i=M+1}^{N} \|c_i(0)- \bar{c}_i\|_{L^2}^2 \right]e^{-{c}\min\left\{D_1, \dots, D_N\right\} t},
\end{aligned}
\ee and 
\be \la{longt4}
\int_{0}^{t} \sum\limits_{i=1}^{N} D_i \|\na c_i(s)\|_{L^2}^2 ds \le \sum\limits_{i=1}^{M} \|c_i(0) - \gamma_i\|_{L^2}^2 
+ \sum\limits_{i=M+1}^{N} \|c_i(0)- \bar{c}_i\|_{L^2}^2,  \;almost \; surely.
\ee 
\end{Thm} 

\begin{proof} 

The proof is divided into two main steps.
\smallskip

\noindent{\bf{Step 1. Potential $L^{\infty}$ bounds.}} The potential $\Phi$ solving the semi-linear Poisson equation \eqref{s211} obeys 
\be 
\|\na \Phi\|_{L^{\infty}}  
= \|\na (\Phi - \gamma)\|_{L^{\infty}}
\le C \|\rho\|_{L^4},
\ee 
due to elliptic regularity estimates and the homogeneous Dirichlet boundary conditions obeyed by $\Phi - \gamma$. Letting
\be 
\rho^* := \sum\limits_{i=1}^{M} z_i \gamma_i + \sum\limits_{i=M+1}^{N} z_i \bar{c}_i,
\ee and recalling the assumption $\rho^* = 0$ stated in \eqref{44}, we have
\be 
\|\na \Phi\|_{L^{\infty}}  
\le C\|\rho - \rho^*\|_{L^4}
\le C\sum\limits_{i=1}^{M} |z_i| \|c_i - \gamma_i\|_{L^4}
+ C \sum\limits_{i=M+1}^{N} |z_i| \|c_i - \bar{c}_i\|_{L^4}
\le C\sum\limits_{i=1}^{N} |z_i| \|\na c_i\|_{L^2},
\ee 
where the last bound follows from the Sobolev inequality and the Poincar\'e inequality applied to the boundary vanishing functions $c_i - 
\gamma_i$ for $i \in \left\{1, \dots, M\right\}$ and the mean-free functions $c_i - \bar{c}_i$ for $i \in \left\{M+1, \dots, N\right\}.$ 
\smallskip

\noindent{\bf{Step 2. Ionic concentrations $L^2$ bounds.}} We fix an index $i \in \left\{1, \dots, M\right\}$ and take the $L^2$ inner product of the equation \eqref{s210} obeyed by $c_i$ with $c_i - \gamma_i$. We obtain the energy equality
\be \la{longt1}
\fr{1}{2} \fr{d}{dt} \|c_i - \gamma_i\|_{L^2}^2 
+ D_i \|\na c_i\|_{L^2}^2
= -D_iz_i \int_{\mathcal{O}} c_i  \na \Phi \cdot \na (c_i - \gamma_i) dx, 
\ee 
after making use of the cancellations $\pa_t \gamma_i = \Delta \gamma_i = 0$ and integration by parts. Next we fix an index $i \in \left\{M+1, \dots, N\right\}$ and study the time evolution of $c_i - \bar{c}_i$ in $L^2$. In view of the blocking boundary condition obeyed by $c_i$, we have the boundary cancellation 
\be 
\int_{\pa \mathcal{O}} (\na c_i + z_ic_i \na \Phi) \cdot n d\sigma(x) = 0,
\ee 
which yields the differential equality
\be \la{longt2}
\fr{1}{2} \fr{d}{dt} \|c_i - \bar{c}_i\|_{L^2}^2
+ D_i \|\na c_i\|_{L^2}^2
= - z_iD_i \int_{\mathcal{O}} c_i \na \Phi \cdot \na (c_i - \bar{c}_i) dx, 
\ee 
after integration by parts. Adding \eqref{longt1} and \eqref{longt2} and applying H\"older's inequality gives 
\be 
\fr{1}{2}\fr{d}{dt} \left[\sum\limits_{i=1}^{M} \|c_i - \gamma_i\|_{L^2}^2 + \sum\limits_{i=M+1}^{N} \|c_i - \bar{c}_i\|_{L^2}^2 \right]
+ \sum\limits_{i=1}^{N} D_i \|\na c_i\|_{L^2}^2 \le \sum\limits_{i=1}^{N} |z_i|D_i \|c_i\|_{L^2}\|\na c_i\|_{L^2} \|\na \Phi\|_{L^{\infty}},
\ee
which implies that
\begin{align}
&\fr{1}{2}\fr{d}{dt} \left[\sum\limits_{i=1}^{M} \|c_i - \gamma_i\|_{L^2}^2 + \sum\limits_{i=M+1}^{N} \|c_i - \bar{c}_i\|_{L^2}^2 \right]
+ \sum\limits_{i=1}^{N} D_i \|\na c_i\|_{L^2}^2
\\&\quad\quad\quad\quad\le C\sum\limits_{i=1}^{N} \sum\limits_{j=1}^{N} |z_i||z_j| D_i \|c_i\|_{L^2} \|\na c_i\|_{L^2}\|\na c_j\|_{L^2}
\le C(z_i, D_i, N)\sum\limits_{i=1}^{N} \|c_i\|_{L^2}^2 \|\na c_i\|_{L^2}^2 
+ \sum\limits_{i=1}^{N} \fr{D_i}{2} \|\na c_i\|_{L^2}^2,
\end{align}
due to the potential bounds derived in \textbf{Step 1}. Here $C(z_i, D_i, N)$ is a constant depending on the maximum value of the valences, the number of ionic species, and the minimum and maximum values of the diffusivities. Consequently, we infer that 
\be \la{dd}
\beg{aligned}
&\fr{d}{dt} \left[\sum\limits_{i=1}^{M} \|c_i - \gamma_i\|_{L^2}^2 + \sum\limits_{i=M+1}^{N} \|c_i - \bar{c}_i\|_{L^2}^2 \right]
+ \fr{1}{2} \sum\limits_{i=1}^{N} D_i \|\na c_i\|_{L^2}^2
\le \sum\limits_{i=1}^{N}  \|\na c_i\|_{L^2}^2  \left[2C(z_i, D_i, N) \|c_i\|_{L^2}^2 - \fr{D_i}{2}\right]
\\&\quad\quad\le \sum\limits_{i=1}^{M}  \|\na c_i\|_{L^2}^2  \left[4C(z_i, D_i, N) \|c_i - \gamma_i\|_{L^2}^2 - \fr{D_i}{2}\right]
+\sum\limits_{i=M+1}^{N}  \|\na c_i\|_{L^2}^2  \left[4C(z_i, D_i, N)\|c_i - \bar{c}_i\|_{L^2}^2 - \fr{D_i}{2}\right]
\\&\quad\quad\quad\quad\quad\quad+ \sum\limits_{i=1}^{M}  4C(z_i, D_i, N)\gamma_i^2 |\mathcal{O}| \|\na c_i\|_{L^2}^2  
+\sum\limits_{i=M+1}^{N}  4C(z_i, D_i, N)\bar{c}_i^2 |\mathcal{O}| \|\na c_i\|_{L^2}^2. 
\end{aligned}
\ee 
Supposing that
\be 
4C(z_i, D_i, N) \gamma_i^2 |\mathcal{O}| \le \fr{D_i}{8}
\text{ for } i \in \left\{1, \dots, M\right\}, \quad \text{and} \quad 
4C(z_i, D_i, N) \bar{c}_i^2 |\mathcal{O}| \le \fr{D_i}{8}, \text{ for } i \in \left\{M+1, \dots, N\right\},
\ee
the last two terms on the right-hand side of \eqref{dd} get absorbed by the dissipation on the left-hand side of \eqref{dd}, yielding 
\be \la{dd1}
\beg{aligned}
&\fr{d}{dt} \left[\sum\limits_{i=1}^{M} \|c_i - \gamma_i\|_{L^2}^2 + \sum\limits_{i=M+1}^{N} \|c_i - \bar{c}_i\|_{L^2}^2 \right]
+ \fr{1}{4} \sum\limits_{i=1}^{N} D_i \|\na c_i\|_{L^2}^2
\\&\quad\quad\quad\quad\le \sum\limits_{i=1}^{M}  \|\na c_i\|_{L^2}^2  \left[4C(z_i, D_i, N)\|c_i - \gamma_i\|_{L^2}^2 - \fr{D_i}{2}\right]
+\sum\limits_{i=M+1}^{N}  \|\na c_i\|_{L^2}^2  \left[4C(z_i, D_i, N) \|c_i - \bar{c}_i\|_{L^2}^2 - \fr{D_i}{2}\right].
\end{aligned}
\ee
Moreover, if the initial concentrations satisfy the bounds
\be 
\sum\limits_{i=1}^{M} \|c_i(0) - \gamma_i\|_{L^2}^2 + \sum\limits_{i=M+1}^{N} \|c_i(0) - \bar{c}_i\|_{L^2}^2 < \fr{\min\left\{D_1, \dots, D_N\right\}}{8C(z_i, D_i, N)},
\ee 
then by a continuity argument, we conclude that 
\be 
\sum\limits_{i=1}^{M} \|c_i(t) - \gamma_i\|_{L^2}^2 + \sum\limits_{i=M+1}^{N} \|c_i(t) - \bar{c}_i\|_{L^2}^2 < \fr{\min\left\{D_1, \dots, D_N\right\}}{8C(z_i, D_i, N)},
\ee 
for all times $t \ge 0$.Therefore, the differential inequality 
\be 
\fr{d}{dt} \left[\sum\limits_{i=1}^{M} \|c_i - \gamma_i\|_{L^2}^2 + \sum\limits_{i=M+1}^{N} \|c_i - \bar{c}_i\|_{L^2}^2 \right]
+ \fr{1}{4} \sum\limits_{i=1}^{N} D_i \|\na c_i\|_{L^2}^2
\le 0
\ee 
holds for all $t \ge 0.$ Applications of the Poincar\'e inequality give the desired decay in $L^2$ described by \eqref{longt3}. Integrating in time from $0$ to $t$, we obtain the $L^2$ gradient estimate \eqref{longt4}. This ends the proof of Theorem \ref{longt}. 
\end{proof}

{The next theorem addresses the $L^p$ regularity of ionic concentrations and their long-term behavior in $L^p$ norms.}

\beg{Thm} \la{t4} 
Let $p \in (2, \infty)$ be an even integer. Let $u_0\in H$ and $c_i(0) \in L^p$ be nonnegative for each $i \in  \left\{1, \dots, N\right\}$. Suppose the initial ionic concentrations satisfy the $L^2$ smallness condition \eqref{SC} imposed in Theorem \ref{longt}. Then there exists a positive constant $C_p$ depending on the size of the initial data in $L^{p}$, the parameters of the problem, and $p$, and a positive constant $c_p$ depending on $p$ and the parameters of the problem, such that the estimate
\be
\sum\limits_{i=1}^{M} \|c_i(t) - \gamma_i\|_{L^{p}} +\sum\limits_{i=M+1}^{N} \|c_i(t) - \bar{c}_i\|_{L^{p}} 
\le C_p e^{-c_p t}
\ee 
holds for all times $t \ge 0$ and almost surely. 
\end{Thm}

\begin{proof}
{The proof is divided into two main steps distinguishing the cases of ionic concentrations with Dirichlet boundary conditions and blocking boundary conditions separately.
}

\smallskip
\noindent {\bf Step 1. Ionic concentrations with Dirichlet boundary conditions.}
We fix an index $i \in \left\{1, \dots, M\right\}$ and multiply the ionic concentration equation \eqref{s210} obeyed by the corresponding $c_i$ by $(c_i - \gamma_i)^{p-1}$. Since the nonlinear term in $u$ vanishes, we obtain the energy evolution
\be \la{longt6}
\fr{1}{p} \fr{d}{dt} \|c_i - \gamma_i\|_{L^p}^p
- D_i \int_{\mathcal{O}}  (c_i - \gamma_i)^{p-1} \Delta (c_i - \gamma_i) dx
= z_i D_i \int_{\mathcal{O}} \na \cdot (c_i \na \Phi) (c_i - \gamma_i)^{p-1} dx.
\ee 
Since $c_i$ amounts to $\gamma_i$ on $\pa \mathcal{O}$, we can integrate by parts the diffusion term as follows,
\be 
\beg{aligned}
&- D_i \int_{\mathcal{O}}  (c_i - \gamma_i)^{p-1} \Delta (c_i - \gamma_i) dx
= D_i \int_{\mathcal{O}}  \na (c_i - \gamma_i)^{p-1} \cdot \na (c_i - \gamma_i) dx
\\&\quad\quad= D_i (p-1) \int_{\mathcal{O}}  (c_i - \gamma_i)^{p-2} \na c_i \cdot \na c_i dx
= D_i (p-1) \|(c_i - \gamma_i)^{\fr{p-2}{2}} \na c_i\|_{L^2}^2,
\end{aligned}
\ee 
where the last equality uses the positivity of $(c_i - \gamma_i)^{p-2}$ that follows from the evenness of the integer $p$. A similar argument allows us to rewrite the migration term as 
\be 
\beg{aligned}
z_i D_i \int_{\mathcal{O}} \na \cdot (c_i \na \Phi) (c_i - \gamma_i)^{p-1} dx.
&= - z_iD_i (p-1) \int_{\mathcal{O}} (c_i - \gamma_i) (c_i - \gamma_i)^{p-2} \na \Phi \cdot \na c_i dx
\\&\quad\quad- z_iD_i \gamma_i (p-1) \int_{\mathcal{O}} (c_i - \gamma_i)^{p-2} \na \Phi \cdot \na c_i dx,
\end{aligned}
\ee 
which can be estimated as
\be \la{longt7}
\beg{aligned}
&|z_i D_i \int_{\mathcal{O}} \na \cdot (c_i \na \Phi) (c_i - \gamma_i)^{p-1} dx| 
\\&\quad\quad\le |z_i|D_i (p-1) \|\na \Phi\|_{L^{\infty}} \|(c_i - \gamma_i)^{\fr{p-2}{2} + 1}\|_{L^2} \|(c_i - \gamma_i)^{\fr{p-2}{2}} \na c_i\|_{L^2}
\\&\quad\quad\quad\quad+ |z_i|D_i \gamma_i(p-1) \|\na \Phi\|_{L^p} \|(c_i - \gamma_i)^{\fr{p-2}{2}}\|_{L^{\fr{2p}{p-2}}} \|(c_i - \gamma_i)^{\fr{p-2}{2}} \na c_i\|_{L^2}
\\&\quad\quad\le |z_i|D_i (p-1) \|(c_i - \gamma_i)^{\fr{p-2}{2}} \na c_i\|_{L^2} \|\na \Phi\|_{L^{\infty}} \|c_i - \gamma_i\|_{L^p}^{\fr{p}{2}} 
\\&\quad\quad\quad\quad+  C|z_i|D_i\gamma_i (p-1) \|(c_i - \gamma_i)^{\fr{p-2}{2}} \na c_i\|_{L^2} \|\rho\|_{L^{2}}  \|c_i - \gamma_i\|_{L^p}^{\fr{p-2}{2}},
\end{aligned}
\ee
via applications of H\"older's inequality, the Sobolev inequality, and elliptic regularity estimates. Putting \eqref{longt6}--\eqref{longt7} together and applying Young's inequality for products, we end up with the differential inequality
\be \la{longt8}
\beg{aligned}
&\fr{1}{p} \fr{d}{dt} \|c_i - \gamma_i\|_{L^p}^p + \fr{D_i(p-1)}{2} \|(c_i - \gamma_i)^{\fr{p-2}{2}} \na c_i\|_{L^2}^2 
\\&\quad\quad\le C|z_i|^2 D_i(p-1) \left[\|\na \Phi\|_{L^{\infty}}^2  \|c_i - \gamma_i\|_{L^p}^p + C\gamma_i^2\|\rho\|_{L^2}^2 \|c_i - \gamma_i\|_{L^p}^{p-2} \right].
\end{aligned} 
\ee 
Now we note that the dissipation amounts to 
\be 
\beg{aligned}
\|(c_i - \gamma_i)^{\fr{p-2}{2}} \na c_i\|_{L^2}^2
&= ((c_i - \gamma_i)^{\fr{p-2}{2}} \na (c_i - \gamma_i), (c_i - \gamma_i)^{\fr{p-2}{2}} \na (c_i - \gamma_i))_{L^2}
\\&= \fr{4}{p^2}(\na (c_i - \gamma_i)^{\fr{p}{2}}, \na (c_i - \gamma_i)^{\fr{p}{2}})_{L^2} = \fr{4}{p^2} \|\na (c_i - \gamma_i)^{\fr{p}{2}}\|_{L^2}^2,
\end{aligned}
\ee 
which, after using the Poincar\'e inequality applied to the boundary vanishing function $(c_i - \gamma_i)^{\fr{p}{2}}$, boils down to
\be \la{poin}
\|(c_i - \gamma_i)^{\fr{p-2}{2}} \na c_i\|_{L^2}^2
\ge \fr{4c_1}{p^2} \|(c_i - \gamma_i)^{\fr{p}{2}}\|_{L^2}^2
= \fr{4c_1}{p^2} \|c_i - \gamma_i\|_{L^p}^p,
\ee 
where $c_1$ is the constant from the Poincar\'e inequality.
Therefore, the energy inequality \eqref{longt8} implies that
\be
\beg{aligned}
&\fr{1}{p} \fr{d}{dt} \|c_i - \gamma_i\|_{L^p}^p + \fr{4c_1D_i(p-1)}{2p^2} \|c_i - \gamma_i\|_{L^p}^p
\\&\quad\quad\le C|z_i|^2 D_i(p-1) \left[\|\na \Phi\|_{L^{\infty}}^2  \|c_i - \gamma_i\|_{L^p}^p + C\gamma_i^2 \|\rho\|_{L^2}^2 \|c_i - \gamma_i\|_{L^p}^{p-2} \right].
\end{aligned} 
\ee
Dividing both sides by $\|c_i - \gamma_i\|_{L^p}^{p-2}$, this latter inequality reduces to 
\be
\fr{1}{2} \fr{d}{dt} \|c_i - \gamma_i\|_{L^p}^2 + \fr{4 c_1 D_i(p-1)}{2p^2} \|c_i - \gamma_i\|_{L^p}^{2}
\le C|z_i|^2 D_i(p-1) \left[\|\na \Phi\|_{L^{\infty}}^2  \|c_i - \gamma_i\|_{L^p}^{2} + C\gamma_i^2 \|\rho\|_{L^2}^2\right], 
\ee
from which we deduce that
\be
\beg{aligned}
\fr{d}{dt} \|c_i - \gamma_i\|_{L^p}^2 + r(t)  \|c_i - \gamma_i\|_{L^p}^{2}
\le C|z_i|^2 \gamma_i^2 D_i(p-1) \|\rho\|_{L^2}^2,
\end{aligned} 
\ee 
and $r(t)$ is defined by 
\be 
r(t):= \min \left\{\fr{4c_1 D_i(p-1)}{p^2}, \fr{c\min\left\{D_1, \dots, D_N \right\}}{2} \right\} - C|z_i|^2 D_i(p-1) \|\na \Phi\|_{L^{\infty}}^2,
\ee 
(where c is the constant in \eqref{longt3}). We multiply by the integrating factor $e^{\int_{0}^{t} r(s) ds}$ and integrate in time from $0$ to $t$. In view of the $L^2$ decaying estimates established in Theorem \ref{longt}, we have 
\be 
\beg{aligned}
&|z_i|^2 \gamma_i^2 D_i(p-1) \int_{0}^{t} e^{\int_{0}^{s} r(\zeta) d\zeta }\|\rho(s)\|_{L^2}^2 ds
\\&\quad\quad\le C|z_i|^4 \gamma_i^2 D_i(p-1) \int_{0}^{t} e^{\int_{0}^{s} r(\zeta) d\zeta } \left(\sum\limits_{j=1}^{M} \|c_i - \gamma_i \|_{L^2}^2 + \sum\limits_{j=M+1}^{N} \|c_i - \bar{c}_i \|_{L^2}^2 \right) ds
\\&\quad\quad\le C_0{|z_i|^4} \gamma_i^2  D_i(p-1) \int_{0}^{t} e^{\fr{c \min \left\{D_1, \dots, D_N\right\}}{2}s} e^{-c\min \left\{D_1, \dots, D_N\right\}s} ds 
\le C_1(\|c_i(0)\|_{L^2}, z_i, D_i, \gamma_i) (p-1),
\end{aligned}
\ee 
where $C_1(\|c_i(0)\|_{L^2}, z_i, D_i, \gamma_i)$ is a positive constant depending only on the $L^2$ norm of the initial data, the valences and diffusivities of the ionic species, and the Dirichlet boundary data obeyed by $c_i$. Consequently, the $i$th concentration $L^p$ estimate
\be 
\|c_i - \gamma_i\|_{L^p}^2 \le e^{- \int_{0}^{t} r(s) ds} \left[\|c_i(0) - \gamma_i\|_{L^p}^2 + C_1(\|c_i(0)\|_{L^2}, z_i, D_i, \gamma_i)  (p-1) \right]
\ee 
holds for all times $t \ge 0$. Making use of \eqref{longt4} yields
\be 
\beg{aligned}
\int_{0}^{t} r(s) ds 
&\geq \min \left\{\fr{4c_1D_i(p-1)}{p^2}, \fr{c\min\left\{D_1, \dots, D_N\right\}}{2} \right\}t 
- Cz_i^2 D_i (p-1) \int_{0}^{t} \sum\limits_{j=1}^{N} z_j^2\|\na c_j(s)\|_{L^2}^2  ds
\\&\ge \min \left\{\fr{4c_1 D_i(p-1)}{p^2}, \fr{c\min\left\{D_1, \dots, D_N \right\}}{2} \right\}t 
- C_2(\|c_i(0)\|_{L^2}, z_i, D_i, \gamma_i)  (p-1), 
\end{aligned}
\ee 
for some positive constant $C_2(\|c_i(0)\|_{L^2}, z_i, D_i, \gamma_i)$  depending only on the size of the initial concentrations in $L^2$ and the parameters of the problem. Thus, we obtain that, for any $t \ge 0$,
\be 
\|c_i(t) - \gamma_i\|_{L^p}^2 
\le e^{C_2 (p-1)} \left[\|c_i(0) - \gamma_i\|_{L^p}^2 + C_1 (p-1) \right] e^{-\min \left\{\fr{ c_1 D_i(p-1)}{p^2}, \fr{c\min\left\{D_1, \dots, D_N \right\}}{2}\right\} t}.
\ee 

\smallskip
\noindent {\bf Step 2. Ionic concentrations with blocking boundary conditions.} Now we proceed to study the $L^p$ asymptotic behavior of the ionic concentrations having blocking boundary conditions. The situation differs from the case of Dirichlet boundary conditions due to the absence of the simplified Poincar\'e inequality \eqref{poin}. We present a proof by induction on $p$ from which the desired $L^p$ decay follows. Indeed, suppose there is a positive constant $\Gamma_p^1$ depending on the size of the initial data in $L^{p-2}$, the parameters of the problem, and $p$, and a positive constant $\Gamma_p^2$ depending on $p$ and the parameters of the problem, such that, for all times $t \geq 0$,  the estimate
\be \la{ind1}
\|c_i(t)- \bar{c}_i\|_{L^{p-2}} \le \Gamma_p^1 e^{-\Gamma_p^2 t}
\ee 
holds. We aim to show that there is a positive constant $\kappa_p^1$ depending on the size of the initial data in $L^{p}$, the parameters of the problem, and $p$, and a positive constant $\kappa_p^2$ depending on $p$ and the parameters of the problem, such that for all times $t \geq 0$, the following estimate holds
\be \la{ind2}
\|c_i(t) - \bar{c}_i\|_{L^{p}} \le \kappa_p^1 e^{-\kappa_p^2 t}.
\ee 

Fixing an index $i \in \left\{M+1, \dots, N \right\}$, we have
\be \la{longt9}
\beg{aligned}
&\fr{1}{p} \fr{d}{dt} \|c_i - \bar{c}_i\|_{L^p}^p + D_i(p-1)\|(c_i - \bar{c}_i)^{\fr{p-2}{2}} \na c_i\|_{L^2}^2 
\\&\quad\quad= -z_iD_i (p-1) \int_{\mathcal{O}} (c_i - \bar{c}_i)^{p-1} \na \Phi \cdot \na c_i  dx
- z_iD_i \bar{c}_i (p-1) \int_{\mathcal{O}} (c_i - \bar{c}_i)^{p-2} \na \Phi \cdot \na c_i dx,
\end{aligned} 
\ee
where the boundary cancellation
\be 
\int_{\pa \mathcal{O}} (\na c_i + z_i c_i \na \Phi)(c_i - \bar{c}_i)^{p-1}\cdot n d\sigma(x) =0,
\ee 
is exploited. We estimate
\be 
\beg{aligned}
\left|z_iD_i (p-1) \int_{\mathcal{O}} (c_i - \bar{c}_i)^{p-1} \na \Phi \cdot \na c_i  dx \right|
&\le |z_i|D_i(p-1) \|\na \Phi\|_{L^{\infty}} \|(c_i - \bar{c}_i)^{\fr{p-2}{2}} \na c_i\|_{L^2} \|(c_i - \bar{c}_i)^{\fr{p}{2}}\|_{L^2}
\\&= |z_i|D_i(p-1) \|\na \Phi\|_{L^{\infty}} \|(c_i - \bar{c}_i)^{\fr{p-2}{2}} \na c_i\|_{L^2} \|c_i - \bar{c}_i\|_{L^p}^{\fr{p}{2}},
\end{aligned}
\ee 
and 
\be 
\beg{aligned}
\left| z_iD_i \bar{c}_i (p-1) \int_{\mathcal{O}} (c_i - \bar{c}_i)^{p-2} \na \Phi \cdot \na c_i dx \right|
&\le |z_i|D_i \bar{c}_i (p-1) \|\na \Phi\|_{L^{\infty}} \|(c_i - \bar{c}_i)^{\fr{p-2}{2}} \na c_i\|_{L^2}\|(c_i - \bar{c}_i)^{\fr{p-2}{2}}\|_{L^2}
\\&=  |z_i|D_i \bar{c}_i (p-1) \|\na \Phi\|_{L^{\infty}} \|(c_i - \bar{c}_i)^{\fr{p-2}{2}} \na c_i\|_{L^2}\|c_i - \bar{c}_i\|_{L^{p-2}}^{\fr{p-2}{2}},
\end{aligned}  
\ee 
through applications of H\"older's inequality. Taking advantage of the dissipation governing the $L^p$ evolution produces
\be \la{longt10}
\beg{aligned}
&\fr{1}{p} \fr{d}{dt} \|c_i - \bar{c}_i\|_{L^p}^p + \fr{D_i(p-1)}{2}\|(c_i - \bar{c}_i)^{\fr{p-2}{2}} \na c_i\|_{L^2}^2 
\\&\quad\quad\le C|z_i|^2 D_i (p-1) \|\na \Phi\|_{L^{\infty}}^2 \left(\|c_i - \bar{c}_i\|_{L^p}^p + \bar{c}_i^2 \|c_i - \bar{c}_i\|_{L^{p-2}}^{p-2} \right),
\end{aligned}
\ee 
by applying Young's inequality. In view of the Poincar\'e inequality, we have
\be 
\left\|\left(c_i - \bar{c}_i \right)^{\fr{p}{2}} - \fr{1}{|\mathcal{O}|} \int_{\mathcal{O}} \left(c_i - \bar{c}_i \right)^{\fr{p}{2}} dx \right\|_{L^2}^2
\le C\|\na (c_i - \bar{c}_i)^{\fr{p}{2}}\|_{L^2}^2
= \fr{Cp^2}{4} \|(c_i - \bar{c}_i)^{\fr{p-2}{2}} \na c_i\|_{L^2}^2,
\ee 
which, after a straightforward application of the reverse triangle inequality, yields
\be 
\|\left(c_i - \bar{c}_i \right)^{\fr{p}{2}}\|_{L^2}^2
\le Cp^2 \|(c_i - \bar{c}_i)^{\fr{p-2}{2}} \na c_i\|_{L^2}^2
+ C\left(\fr{1}{|\mathcal{O}|} \int_{\mathcal{O}} \left(c_i - \bar{c}_i \right)^{\fr{p}{2}} dx \right)^2.
\ee 
Here $C$ is a constant depending on the size of the domain $\mathcal{O}$. From \eqref{longt10}, we deduce that 
\be 
\beg{aligned}
&\fr{1}{p} \fr{d}{dt} \|c_i - \bar{c}_i\|_{L^p}^p 
+ \fr{D_i(p-1)}{2Cp^2} \|c_i - \bar{c}_i\|_{L^p}^p 
\\&\quad\quad\quad\quad\le C|z_i|^2 D_i(p-1) \|\na \Phi\|_{L^{\infty}}^2 \left[  \|c_i - \bar{c}_i\|_{L^p}^p + \bar{c}_i^2  \|c_i - \bar{c}_i\|_{L^{p-2}}^{p-2} \right]
+ \fr{1}{p^2} \left(\fr{1}{|\mathcal{O}|} \int_{\mathcal{O}} \left(c_i - \bar{c}_i \right)^{\fr{p}{2}} dx \right)^2.
\end{aligned}
\ee 
Since $p \ge 4$, the exponent $\fr{2(p-2)}{p}$ is greater than or equal to 1, thus the average on the right-hand side of the latter differential inequality bounds as
\be 
\fr{1}{p^2} \left(\fr{1}{|\mathcal{O}|} \int_{\mathcal{O}} \left(c_i - \bar{c}_i \right)^{\fr{p}{2}} dx \right)^2
\le  C(p, \mathcal{O}) \left(\int_{\mathcal{O}}  \left(c_i - \bar{c}_i \right)^{p-2}dx  \right)^{\fr{p}{p-2}}, 
\ee 
where $C(p,\mathcal{O})$ is a constant depending on $p$ and the diameter of $\mathcal{O}$. This gives rise to 
\be 
\fr{d}{dt} \|c_i - \bar{c}_i\|_{L^p}^p + r_1(t) \|c_i - \bar{c}_i\|_{L^p}^p 
\le C(\mathcal{O}, z_i, D_i, p) \left[\bar{c}_i^2\|\na \Phi\|_{L^{\infty}}^2 \|c_i - \bar{c}_i\|_{L^{p-2}}^{p-2}
+\|c_i - \bar{c}_i\|_{L^{p-2}}^p\right],
\ee 
and
\be 
r_1(t) = \min\left\{\fr{D_i (p-1)}{2Cp}, (p-2)\Gamma_p^2, \fr{p\Gamma_p^2}{2} \right\} - C|z_i|^2 D_i p(p-1) \|\na \Phi\|_{L^{\infty}}^2.
\ee 
We multiply by the integrating factor $e^{\int_{0}^{t} r_1(s) ds}$ and integrate in time from $0$ to $t$. We use the induction hypothesis \eqref{ind1} to obtain good control of the $L^{p-2}$ terms. In fact, we have 
\be  
\beg{aligned}
&\left|\int_{0}^{t} e^{\int_{0}^{s} r_1(\zeta) d\zeta} \|\na \Phi\|_{L^{\infty}}^2 \|c_i - \bar{c}_i\|_{L^{p-2}}^{p-2} ds \right|
\\&\quad\quad\le C(\Gamma_p^1)^{p-2}\int_{0}^{t} e^{(p-2)\Gamma_p^2s} e^{-(p-2)\Gamma_p^2s} \left(\sum\limits_{j=1}^{N} z_j^2 \|\na c_j(s)\|_{L^2}^2\right) ds
\le C(\Gamma_p^1)^{p-2} \int_{0}^{t} \sum\limits_{j=1}^{N} \|\na c_j(s)\|_{L^2}^2 ds 
\le C(\Gamma_p^1)^{p-2},
\end{aligned}
\ee  
where the last inequality follows from \eqref{longt4}, and 
\be 
\left|\int_{0}^{t} e^{\int_{0}^{s} r_1(\zeta) d\zeta} \|c_i - \bar{c}_i\|_{L^{p-2}}^p ds\right|
\le C(\Gamma_p^1)^p\int_{0}^{t} e^{\fr{p\Gamma_p^2}{2}s} e^{-p\Gamma_p^2s} ds
\le Cp^{-1}(\Gamma_p^1)^p (\Gamma_p^2)^{-1}.
\ee 
As a consequence of the boundedness of these time integrals, we conclude that
\be 
\|c_i(t) - \bar{c}_i\|_{L^p}^p
\le e^{- \int_0^t r_1(s)ds} \left[\|c_i(0) - \bar{c}_i\|_{L^p}^p + C(\mathcal{O}, z_i, D_i, p, \|c_i(0)\|_{L^{p-2}})\right],
\ee where $ C(\mathcal{O}, z_i, D_i, p, \|c_i(0)\|_{L^{p-2}})$ is a positive constant depending on the diameter of $\mathcal{O}$, $z_i, D_i, p$, and the $L^{p-2}$ norm of the ionic concentrations. We use again the global integrability estimate \eqref{longt4} to obtain uniform-in-time control of $\int_{0}^{t}\|\na \Phi\|_{L^{\infty}}^2 dt$ and deduce that \eqref{ind2} holds. This ends the proof of Theorem \ref{t4}.
\end{proof}

\section{S-NPNS Semigroup and Feller Properties} \label{fel}

This section is dedicated to Feller properties of the Markovian semigroup associated with the S-NPNS system \eqref{sys:SNPNS}. To this end, we denote by $\mathcal{H}$ the $N+1$ product space 
\be \label{Hspace}
\mathcal{H} := H \times L^2 \times \dots L^2
\ee  of vectors $\omega = (v, \xi_1, \dots, \xi_N)$ where $v \in H$ and $\xi_i \in L^2$ for all $i \in \left\{1, \dots, N \right\}$, equipped with the natural norm 
$ 
\|\omega\|_{\mathcal{H}}^2 = \|v\|_{L^2}^2 + \sum\limits_{i=1}^{N} \|\xi_i\|_{L^2}^2.
$ 
For given data $\gamma$ and $\gamma_1, \dots, \gamma_M$  with valences $z_1, \dots, z_N$, we consider the space $\tilde{\mathcal{H}}$ consisting of vectors $(v, \xi_1, \dots, \xi_N) \in \mathcal{H}$ such that the scalar functions $\xi_1, \dots, \xi_N$ are nonnegative a.e. and satisfy 
\be  
\sum\limits_{i=1}^{M} z_i \xi_i|_{\pa \mathcal{O}} 
+ \sum\limits_{i=M+1}^{N} \fr{z_i}{|\mathcal{O}|} \int_{\mathcal{O}} \xi_i(x) dx = 0,
\ee {and such that the vector $(v, \xi_1, \dots, \xi_N)$ and scalar 
\be 
\Psi := (-\Delta_D)^{-1} \left(\sum\limits_{i=1}^{N} z_i\xi_i \right) + \gamma
\ee obey the boundary conditions
\be
\begin{split}
    &v|_{\pa \mathcal{O}} = 0, \quad (\xi_1,\dots,\xi_M)|_{\pa \mathcal{O}} = (\gamma_1, \dots, \gamma_M), \quad \Psi|_{\partial \mathcal O}=\gamma,
    \\
    &\left((\na \xi_{M+1} + z_{M+1} \xi_{M+1} \na \Psi)|_{\pa \mathcal{O}} \cdot n, \dots, (\na \xi_{N} + z_N \xi_{N} \na \Psi)|_{\pa \mathcal{O}} \cdot n \right)=(0, \dots, 0).
\end{split}
\ee Here $\Delta_D$ is the 2D Laplacian with homogeneous Dirichlet boundary conditions.
} 

For a positive time $t \ge 0$ and a Borel set $A \in \mathcal{B}(\tilde{\mathcal{H}})$, we define the Markov transition kernels associated with the S-NPNS system \eqref{sys:SNPNS} with the mixed boundary conditions \eqref{3} by 
\be \label{transitionkernels}
P_t(\omega_0, A) := \PP (\omega (t, \omega_0) \in A),
\ee
where $\omega(t, \omega_0)$ denotes the solution $\omega = (u, c_1, \dots, c_N)$ to the problem \eqref{sys:SNPNS} with boundary conditions \eqref{3} and initial datum $\omega_0 = (u_0, c_1(0), \dots, c_N(0))$.
Let $\mathcal{M}_b(\tilde{\mathcal{H}})$ be the collection of bounded real-valued Borel measurable functions on $\tilde{\mathcal{H}}$. For each $t \ge 0$ and $\phi \in \mathcal{M}_b(\tilde{\mathcal{H}})$, we define the Markovian semigroup, denoted by $\tilde P_t$, by 
\be \label{markovsemigroup}
\tilde P_t \phi (\omega_0) := \mathbb{E}\phi(\omega(t, \omega_0)) = \int_{\tilde{\mathcal{H}}} \phi (\omega) P_t (\omega_0, d\omega).
\ee 

Let $C_b(\tilde{\mathcal{H}})$ be the space of continuous bounded real-valued functions on $\tilde{\mathcal{H}}$. The semigroup $\left\{\tilde P_t\right\}_{t \ge 0}$ obeys the following property:

\beg{Thm} \la{feller} 

The semigroup $\left\{\tilde P_t\right\}_{t \ge 0}$ is Markov Feller on the space $C_b(\tilde{\mathcal{H}})$. That is, if $\phi \in C_b(\tilde{\mathcal{H}})$ then $\tilde P_t \phi \in C_b(\tilde{\mathcal{H}})$.

\end{Thm}

In order to prove Theorem \ref{feller}, we need the following preliminary proposition:

\beg{prop} \la{cont} 
Let $\omega_0^1 = (u_0^1, c_1^1(0), \dots, c_N^1(0))$ and $\omega_0^2 = (u_0^2, c_1^2(0), \dots, c_N^2(0))$ be in $\tilde{\mathcal{H}}$. Then the solutions $\omega^1(t) = (u^1(t), c_1^1(t), \dots, c_N^1(t))$ and $\omega^2(t) = (u^2(t), c_1^2(t), \dots, c_N^2(t))$ to the S-NPNS system with respective initial data $\omega_0^1$ and $\omega_0^2$ satisfy
\be \la{con13}
\|\omega^1(t) - \omega^2(t)\|_{\mathcal{H}}^2 \le e^{\kappa(t)} \|\omega_0^1 - \omega_0^2\|_{\mathcal{H}}^2,
\ee 
where
\be \la{con14}
\kappa(t) = C\int_{0}^{t} \left\{\sum\limits_{i=1}^{N} \left(\|c_i^1(s)\|_{L^2}^2 + \|c_i^1(s)\|_{L^2}^4 \right) + \sum\limits_{i=1}^{N} \| c_i^2(s)\|_{L^4}^4 + \|\na \Phi^2(s)\|_{L^{\infty}}^2    + \|\na u^2(s)\|_{L^2}^2\right\} ds.
\ee 
\end{prop}

We will first prove Theorem \ref{feller}, assuming Proposition \ref{cont} holds.

\begin{proof}[Proof of Theorem \ref{feller}]
Let $\phi \in C_b(\tilde{\mathcal{H}})$. Suppose $\omega_0^n = (v^n, \xi_1^n, \dots, \xi_N^n)$ is a sequence in $\tilde{\mathcal{H}}$ that converges to $\omega_0 = (v, \xi_1, \dots, \xi_N)$ in the norm of $\mathcal{H}$, that is 
$
\|v^n - v\|_{L^2} + \sum\limits_{i=1}^{N} \|\xi_i^n - \xi\|_{L^2} \rightarrow 0,
$ 
as $n \rightarrow \infty$. We denote by $\omega(t, \omega_0^n)$ and $\omega(t, \omega_0)$ the solutions to the S-NPNS system at time $t$ corresponding to the initial data $\omega_0^n$ and $\omega_0$, respectively. As a consequence of the Lipschitz continuity estimate derived in Proposition \ref{cont}, we have 
\be 
\|\omega(t, \omega_0^n) - \omega(t, \omega_0)\|_{\mathcal{H}}^2
\le e^{C(K_n(t)+ K(t))} \|\omega_0^n - \omega_0\|_{\mathcal{H}}^2,
\ee 
with
\be 
K_n(t) = \int_{0}^{t} \left\{\sum\limits_{i=1}^{N} \left(\|c_i(s, \omega_0^n)\|_{L^2}^2 + \|c_i(s, \omega_0^n)\|_{L^2}^4 \right) \right\}ds,
\ee 
and 
\be 
K(t) = \int_{0}^{t} \left\{\sum\limits_{i=1}^{N} \| c_i(s, \omega_0)\|_{L^4}^4 + \|\na \Phi(s, \omega_0)\|_{L^{\infty}}^2    + \|\na u(s, \omega_0)\|_{L^2}^2\right\} ds. 
\ee 
In view of the regularity of weak solutions obtained in Theorem \ref{glos}, we have  $K(t) < \infty$. Moreover, $K_n(t)$ is uniformly bounded by some constant depending on $t$, the parameters of the problem, and the size of the sequence $\omega_0^n$ in $L^2$. Since the $L^2$ norm of $\omega_0^n$ is convergent, we deduce that this latter sequence is bounded in $L^2$, from which we obtain the uniform boundedness in $n$ of $K_n(t)$ at each instant $t \ge 0$. Thus, 
\be 
\limsup\limits_{n \to \infty} K_n(t) < \infty,
\ee 
for all $t \ge 0$. It follows that
\be 
\limsup\limits_{n \to \infty} \|\omega(t, \omega_0^n) - \omega(t, \omega_0)\|_{\mathcal{H}}^2 = 0,
\ee which implies that
\be 
\lim\limits_{n \to \infty} \|\omega(t, \omega_0^n) - \omega(t, \omega_0)\|_{\mathcal{H}}^2 = 0.
\ee Therefore, it holds that 
\be 
\lim\limits_{n \to \infty} \mathbb{E}\phi (\omega(t, w_0^n)) =  \mathbb{E}\phi (\omega(t, w_0)), 
\ee 
due to the continuity of $\phi$ and the Dominated Convergence Theorem. This ends the proof of Theorem \ref{feller}. 
\end{proof}

We now present the proof of Proposition \ref{cont}.

\begin{proof}[Proof of Proposition \ref{cont}]
We let $u = u^1 - u^2$, $c_i = c_i^1 - c_i^2$ for $i \in \left\{1, \dots, N\right\}$, $\rho = \rho^1 - \rho^2$, and $\Phi = \Phi^1 - \Phi^2$. These differences satisfy the system of deterministic equations
\noeqref{con3}
\noeqref{con4}
\begin{subequations}
    \begin{align}
        \la{con1}
&\pa_t u + Au = - \mathcal{P}(u^1 \cdot \na u) - \mathcal{P}(u \cdot \na u^2) - \mathcal{P}(\rho^1 \na \Phi) - \mathcal{P}(\rho \na \Phi^2),
\\
\la{con2}
&\pa_t c_i - D_i \Delta c_i 
= - u^1 \cdot \na c_i - u \cdot \na c_i^2
+ D_iz_i \na \cdot (c_i^1 \na \Phi)
+ D_i z_i \na \cdot (c_i \na \Phi^2),
\\
\la{con3}
&- \Delta \Phi = \rho,
\\
\la{con4}
&\na \cdot u = 0,
    \end{align}
\end{subequations}
with boundary conditions
\begin{align} \la{con5}
&u|_{\pa \mathcal{O}} = 0,\quad \Phi|_{\pa \mathcal{O}} = 0,\\
c_i|_{\pa \mathcal{O}} = 0, \;\text{ for } i \in \left\{1, \dots, M\right\},  \quad &\text{and} \quad 
\left(\na c_i + z_i (c_i^1 \na \Phi + c_i \na \Phi^2)\right)|_{\pa \mathcal{O}} \cdot n = 0, \text{ for } i \in \left\{M+1, \dots, N \right\}.\la{con78}
\end{align}
Taking the scalar product of the velocity equation \eqref{con1} in $L^2$ with $u$, we obtain
\be \la{con9}
\fr{1}{2} \fr{d}{dt} \|u\|_{L^2}^2 
+ \|A^{\fr{1}{2}} u\|_{L^2}^2
= - \int_{\mathcal{O}} (u \cdot \na u^2) \cdot u dx
- \int_{\mathcal{O}} (\rho^1  \na \Phi) \cdot u dx
- \int_{\mathcal{O}} (\rho \na \Phi^2) \cdot u dx,
\ee 
where the divergence-free condition of $u$ and the self-adjointness of the Leray projector $\mathcal{P}$ are exploited. 
Taking the $L^2$ inner product of the ionic concentration equation \eqref{con2} with $c_i$ and making use of the homogeneous Dirichlet boundary conditions when the index $i \in \left\{1, \dots, M\right\}$ and the blocking boundary conditions  when $i \in \left\{M+1, \dots, N\right\}$ \eqref{con78}, we deduce that each $c_i$ evolves according to the energy equality
\be \la{con10}
\fr{1}{2} \fr{d}{dt} \|c_i\|_{L^2}^2 
- D_i \|\na c_i\|_{L^2}^2
= - \int_{\mathcal{O}} u \cdot \na c_i^2 c_i dx
- D_iz_i \int_{\mathcal{O}} c_i^1 \na \Phi \cdot \na c_i dx
- D_iz_i \int_{\mathcal{O}} c_i \na \Phi^2 \cdot \na c_i dx,
\ee 
after integrating by parts. 
We estimate 
\be \la{con11}
\left|\int_{\mathcal{O}} (u \cdot \na u^2) \cdot u dx \right|
\le \|u\|_{L^4}^2 \|\na u^2\|_{L^2}
\le C\|u\|_{L^2} \|\na u\|_{L^2} \|\na u^2\|_{L^2}
\le \fr{1}{4} \|\na u\|_{L^2}^2 + C\|\na u^2\|_{L^2}^2\|u\|_{L^2}^2,
\ee 
by applying Ladyzhenskaya's interpolation inequality to the boundary vanishing velocity vector field $u$. By making use of the elliptic regularity obeyed by the potential $\Phi$, we have 
\be 
\left| \int_{\mathcal{O}} (\rho^1  \na \Phi) \cdot u dx \right|
\le \|\rho^1\|_{L^2} \|\na \Phi\|_{L^{\infty}} \|u\|_{L^2} 
\le \sum\limits_{i=1}^{N} \fr{D_i}{8} \|\na c_i\|_{L^2}^2 + C\|\rho^1\|_{L^2}^2 \|u\|_{L^2}^2.
\ee 
The Poincar\'e inequality applied to $u$ yields the bound
\be
\left| \int_{\mathcal{O}} (\rho \na \Phi^2) \cdot u dx \right|
\le \|\rho\|_{L^2} \|\na \Phi^2\|_{L^{\infty}} \|u\|_{L^2}
\le \fr{1}{8} \|\na u\|_{L^2}^2 + C\|\na \Phi^2\|_{L^{\infty}}^2 \sum\limits_{i=1}^{N} \|c_i\|_{L^2}^2.
\ee 
Due to the divergence-free property and homogeneous Dirichlet boundary conditions satisfied by $u$, we have
\be 
\beg{aligned}
\left|\int_{\mathcal{O}} u \cdot \na c_i^2 c_i dx \right|
& = \left|\int_{\mathcal{O}} u \cdot \na c_i c_i^2 dx \right| \le \|u\|_{L^4} \|\na c_i\|_{L^2} \|c_i^2\|_{L^4}
\le C\|u\|_{L^2}^{\fr{1}{2}}\|\na u\|_{L^2}^{\fr{1}{2}} \|\na c_i\|_{L^2} \|c_i^2\|_{L^4}
\\&\le \fr{1}{8N} \|\na u\|_{L^2}^2 + \fr{D_i}{8} \|\na c_i\|_{L^2}^2 + C\|c_i^2\|_{L^4}^4 \|u\|_{L^2}^2,
\end{aligned}
\ee 
after integration by parts and interpolation. We bound 
\be 
\beg{aligned}
\left| D_iz_i \int_{\mathcal{O}} c_i^1 \na \Phi \cdot \na c_i dx \right|
&\le C\|c_i^1\|_{L^2} \|\na \Phi\|_{L^{\infty}} \|\na c_i\|_{L^2}
\le C\|c_i^1\|_{L^2} \|\rho\|_{L^4} \|\na c_i\|_{L^2} 
\\&\le C\|c_i^1\|_{L^2} \left(\|\rho\|_{L^2} + \|\rho\|_{L^2}^{\fr{1}{2}} \|\na \rho\|_{L^2}^{\fr{1}{2}}\right) \|\na c_i\|_{L^2}
\\&\le \sum\limits_{j=1}^{N} \fr{D_j}{8N} \|\na c_j\|_{L^2}^2 
+ C\left( \|c_i^1\|_{L^2}^2 + \|c_i^1\|_{L^2}^4\right) \sum\limits_{j=1}^{N} \|c_j\|_{L^2}^2,
\end{aligned}
\ee
by using elliptic regularity estimates and interpolation inequalities again. Finally, a straightforward application of H\"older's and Young's inequalities gives
\be \la{con12}
\left| D_iz_i \int_{\mathcal{O}} c_i \na \Phi^2 \cdot \na c_i dx \right|
\le \fr{D_i}{8} \|\na c_i\|_{L^2}^2 
+ C\|\na \Phi^2\|_{L^{\infty}}^2 \|c_i\|_{L^2}^2.
\ee 
Summing the equations \eqref{con10} over all indices $i \in \left\{1, \dots, N \right\}$, adding the resulting energy equalities to the evolution equation \eqref{con9} obeyed by $u$, and using the estimates \eqref{con11}--\eqref{con12}, we end up with the differential inequality 
\be 
\fr{d}{dt} \left[\|u\|_{L^2}^2 + \sum\limits_{i=1}^{N} \|c_i\|_{L^2}^2 \right]
\le \kappa(t) \left[\|u\|_{L^2}^2 + \sum\limits_{i=1}^{N} \|c_i\|_{L^2}^2 \right],
\ee 
where $\kappa$ is given by \eqref{con14}. We then apply Gronwall's inequality and obtain the desired Lipschitz estimate \eqref{con13}, completing the proof of Proposition \ref{cont}.

\end{proof}

\section{Unique Ergodicity in the Case of Equal Diffusivities and Dirichlet Boundary Conditions} \label{erg}

In this section, we investigate the existence, uniqueness, and smoothness of invariant measures for the Markov transition kernels associated with the S-NPNS system under the assumptions that all ionic species have equal diffusivities and their concentrations have Dirichlet boundary conditions, that is 
\be \la{dcon}
D_1 = D_2 = \dots = D_N = D,
\ee and 
\be \la{upgradedcond1}
c_i|_{\pa \mathcal{O}} = \gamma_i
\ee for all indices $i \in \left\{1, \dots, N\right\}.$ In this setting, and under the following condition on the boundary data (which is equivalent to \eqref{44})
\be \la{upgradedcond}
\sum\limits_{i=1}^{N} z_i \gamma_i = 0,
\ee we can track the evolution of the charge density $\rho$ when coupled with the Navier-Stokes equation. 

\begin{defi}
  Let $Pr(\tilde{\mathcal H})$ be the set of Borel probability measures on $\tilde{\mathcal H}$. An element $\mu\in Pr(\tilde{\mathcal H})$ is called an invariant measure for the Feller Markov semigroup $\tilde{P}_t$ associated to the problem \eqref{sys:SNPNS} with boundary conditions \eqref{3} if 
    \begin{equation}
        \int_{\tilde{\mathcal H}} \phi(\omega_0) d\mu(\omega_0) = \int_{\tilde{\mathcal H}} \tilde{P}_t \phi(\omega_0) d\mu(\omega_0)
    \end{equation}
    for all $t\geq 0$ and any $\phi\in  C_b(\tilde{\mathcal H})$. If $\mu$ is an extremal point of the set containing all such invariant measures, then $\mu$ is said to be an ergodic invariant measure.
\end{defi}

In this section, we will work in two different settings.
{
\begin{setting}\label{setting1}
    Consider $N$ ionic species satisfying \eqref{dcon}, \eqref{upgradedcond1}, and \eqref{upgradedcond}.
    For each $i$-th ionic species with valence $z_i$, there exists a $j$-th ionic species with valence $z_j= -z_i$. Moreover, $\gamma_1, \dots, \gamma_N$ are sufficiently small, and either $g$ and $f$ are small enough or $D$ is large enough. 
\end{setting}

\begin{setting}\label{setting2}
    Consider $N$ ionic species satisfying \eqref{dcon}, \eqref{upgradedcond1}, and \eqref{upgradedcond}. The absolute value of all valences are equal to each other ($|z_i|=z>0$ for any $i \in \left\{1, \dots, N\right\}$).
\end{setting}
}

\subsection{Existence of Ergodic Invariant Measures}
{
Below is the main theorem of this section which concerns the existence of ergodic invariant measures for the S-NPNS model. Its proof depends on several propositions that will be stated and proved later.
}
\beg{Thm} \label{kbprocedure} 
Let $f \in H$ and $g \in H$ be time-independent. Suppose that either Setting \ref{setting1} or Setting \ref{setting2} holds.
Then there exists an ergodic invariant probability measure $\mu$ for the Markov semigroup \eqref{markovsemigroup} associated with the S-NPNS problem \eqref{sys:SNPNS} with boundary conditions 
\be 
u|_{\pa \mathcal{O}} = 0, \quad \Phi|_{\pa \mathcal{O}} = \gamma, \quad (c_1,\dots,c_N)|_{\pa \mathcal{O}}  = (\gamma_1, \dots,  \gamma_N).
\ee  In other words, 
\be 
\int_{\tilde{\mathcal{H}}} \phi(\omega) d\mu(\omega) 
= \int_{\tilde{\mathcal{H}}} \tilde{P}_t \phi(\omega) d\mu(\omega)
\ee for any $\phi \in C_b(\tilde{\mathcal{H}})$, where $\tilde{\mathcal{H}}$ is the space of vectors $(v, \xi_1, \dots, \xi_N) \in H \times L^2 \times \dots \times L^2$ such that $v|_{\pa \mathcal{O}} = 0$, $\xi|_{\pa \mathcal{O}} = \gamma_i$ for $i =1, \dots, N$, and  $\xi_1, \dots, \xi_N$ are nonnegative almost everywhere. 
\end{Thm}

\begin{proof} The proof is divided into two main steps.

\smallskip
\noindent{\bf{Step 1. The set of invariant measures $\mathcal{I}$ is nonempty.}}
   For each $T>0$, we define the time-average probability measure
   \be 
\mu_T (\cdot) = \fr{1}{T} \int_0^{T} P_t(\omega_0, \cdot) dt
   \ee where $P_t$ are the transition kernels defined by \eqref{transitionkernels}. The family $\left\{\mu_T \right\}_{T>0}$ is tight. Indeed, if $R>0$ and $B_R$ is a ball of radius $R$ in $\mathcal{D}(A^{\fr{1}{2}}) \times H^1 \times \dots \times H^1$ (which is compact in $H \times L^2 \times \dots \times L^2$). By the Chebyshev inequality and the moment bounds derived in Propositions \ref{prop7}, \ref{prop8} and \ref{prop9}, we have
   \be 
   \beg{aligned}
&\sup\limits_{T>0} \mu_T(B_R^c) 
= \sup\limits_{T>0}  \fr{1}{T} \int_{0}^{T} \PP(\|\omega(t, (0, \gamma_1, \dots, \gamma_N))\|_{H^1} \ge R)
\\&\quad\quad\le \fr{1}{\log (1+R^2)} \sup\limits_{T>0} \fr{1}{T} \int_{0}^{T} \E \log (1 + \|\omega(t, (0, \gamma_1, \dots, \gamma_N))\|_{H^1}^2) dt
\\&\quad\quad\le \fr{1}{\log (1+R^2)} \sup\limits_{T>0} \fr{1}{T} \int_{0}^{T} \E \log (1 + \|u (t, (0, \gamma_1, \dots, \gamma_N))\|_{H^1}^2 + \sum\limits_{i=1}^N \|c_i(t, (0, \gamma_1, \dots, \gamma_N))\|_{H^1}^2) dt
\\&\quad\quad\le \fr{C}{\log (1+R^2)} 
\end{aligned}
   \ee for some constant $C$ depending only on the parameters of the problem and the forcing terms $f$ and $g$. 
   Letting $R \rightarrow \infty$ yields the tightness of the family  $\left\{\mu_T \right\}_{T>0}$. By Prokhorov's theorem and the Krylov Bogoliubov averaging procedure, we deduce that $\left\{\mu_T \right\}_{T>0}$ has a subsequence that converges to an invariant measure $\mu$ for $\tilde{P}_t$. 
   
\smallskip
\noindent{\bf{Step 2. Existence of an ergodic invariant measure.}} In view of Step 1, $\mathcal{I}$ is nonempty. From the definition of $\tilde{P}_t$, we deduce that $\mathcal{I}$ is convex. Since $\tilde{P}_t$ is Feller, it follows that $\mathcal{I}$ is closed. Finally, the moment bounds derived in Propositions \ref{prop7}, \ref{prop8}, and \ref{prop9} allow us to conclude that $\mathcal{I}$ is tight and thus compact. By the Krein-Millman theorem, $\mathcal{I}$ has an extreme point which turns out to be an ergodic invariant measure. We refer the reader to \cite{da2014stochastic} for a more detailed elaboration of this argument. 
\end{proof}

{We note that Theorem \ref{kbprocedure} holds in two distinct settings, Setting \ref{setting1} and Setting \ref{setting2}. Below, we present two sequences of propositions that provide different tools and ingredients needed for each setting separately.
}

\subsubsection{Moment bounds under Setting \ref{setting1}} {In this subsection, we address two propositions essential for the establishment of Theorem \ref{kbprocedure} under Setting \ref{setting1}. The second proposition is based on several auxiliary lemmas and results.}

\beg{prop} \label{prop7} Let $u_0 \in H$ and $c_i(0) \in L^2$ for all $i \in \left\{1, \dots, N\right\}$, and suppose the ionic concentrations are nonnegative. Under conditions \eqref{dcon}, \eqref{upgradedcond1}, and \eqref{upgradedcond}, it holds that, for all $t \ge 0$,
\be \la{ito1}
\beg{aligned}
&\E \left[\|u(t)\|_{L^2}^2 + \|\na \Phi(t)\|_{L^2}^2 \right]
+ \E \int_{0}^{t} \left[\|\na u(s)\|_{L^2}^2 + D\|\rho(s)\|_{L^2}^2 + D\sum\limits_{i=1}^{N} \|z_i\sqrt{c_i} \na \Phi (s)\|_{L^2}^2 \right]ds
\\&\quad\quad\quad\quad\le \|u_0\|_{L^2}^2 + \|\na \Phi_0\|_{L^2}^2 + \left(C\|f\|_{L^2}^2 + \|g\|_{L^2}^2 \right)t, 
\end{aligned}
\ee  and 
\be \la{ito2}
\beg{aligned}
&\E \left[\|u(t)\|_{L^2}^2 + \|\na \Phi(t)\|_{L^2}^2 \right]^2
\\&\quad\quad+ \E \int_{0}^{t} \left(\|u(s)\|_{L^2}^2 + \|\na \Phi(s)\|_{L^2}^2 \ \right) \left(\|\na u(s)\|_{L^2}^2 + D\|\rho(s)\|_{L^2}^2 + D\sum\limits_{i=1}^{N} \|z_i\sqrt{c_i} \na \Phi (s)\|_{L^2}^2 \right)ds
\\&\quad\quad\quad\quad\quad\quad\le  \left[\|u_0\|_{L^2}^2 + \|\na \Phi_0\|_{L^2}^2 \right]^2 
+C\left(\|g\|_{L^2}^4 + \|f\|_{L^2}^4 \right)t.
\end{aligned}
\ee 
\end{prop}

\begin{proof}
    Multiplying the ionic concentration equations by $z_i$ gives
    \be 
\pa_t(z_ic_i) + u \cdot \na(z_ic_i) - D\Delta (z_ic_i)
= Dz_i^2 \na \cdot (c_i \na \Phi),
    \ee 
for all $i \in \left\{1, \dots, N\right\}$. Summing over all indices $i \in \left\{1, \dots, N\right\}$ yields the equation 
\be \la{densityevolution}
\pa_t \rho + u \cdot \na \rho - D\Delta \rho 
= D \sum\limits_{i=1}^{N} \na \cdot (z_i^2 c_i \na \Phi),
\ee 
which describes the time evolution of the charge density $\rho$. Note that, here, the diffusion term $-D\Delta \rho$ shows up as a consequence of all ionic species having equal diffusivities. Multiplying \eqref{densityevolution} by $\Phi - \gamma$,  integrating in space over $\mathcal{O}$, integrating by parts and using the Dirichlet boundary data obeyed by $c_i$ and the vanishing condition \eqref{upgradedcond}, we have
\be 
    \beg{aligned}
-D \int_{\mathcal{O}} \Delta \rho (\Phi - \gamma) dx
&= D\int_{\mathcal{O}} \na \rho \cdot \na \Phi dx
= D\int_{\mathcal{O}} \na \left(\rho - \sum\limits_{i=1}^{N} z_i \gamma_i\right) \cdot \na \Phi dx
\\
&= -D\int_{\mathcal{O}} \left(\rho - \sum\limits_{i=1}^{N} z_i \gamma_i\right) \cdot \Delta \Phi dx
= D\int_{\mathcal{O}} \rho^2 dx.
\end{aligned}
\ee
Due to the divergence-free property of the velocity $u$, it follows that
    \be 
\int_{\mathcal{O}} (u \cdot \na \rho) (\Phi - \gamma) dx 
= - \int_{\mathcal{O}} (u \cdot \na \Phi) \rho dx.
    \ee 
Another integration by parts allows us to deduce the relation
    \be 
D \sum\limits_{i=1}^{N} \int_{\mathcal{O}} \na \cdot (z_i^2 c_i \na \Phi) (\Phi - \gamma) dx
= - D \sum\limits_{i=1}^{N} \int_{\mathcal{O}} z_i^2 c_i \na \Phi \cdot \na \Phi dx 
= - D \sum\limits_{i=1}^{N} \|z_i \sqrt{c_i} \na \Phi\|_{L^2}^2,
    \ee 
where the nonnegativity of the ionic concentrations is used. Thus, the following deterministic energy equality
    \be \la{energy1}
\fr{d}{dt} \|\na \Phi\|_{L^2}^2 + 2D\|\rho\|_{L^2}^2 + 2D\sum\limits_{i=1}^{N} \|z_i \sqrt{c_i} \na \Phi\|_{L^2}^2
= 2 \int_{\mathcal{O}} \rho \na \Phi \cdot u dx
    \ee 
holds. As for the stochastic evolution of the velocity $u$, we apply It\^o's lemma and obtain 
    \be \la{energy2}
d \|u\|_{L^2}^2 
+ 2\|\na u\|_{L^2}^2 dt
= -2(\rho \na \Phi, u)_{L^2} dt
+ 2(f,u)_{L^2} dt
+ \|g\|_{L^2}^2 dt
+ 2(g,u)_{L^2} dW.
    \ee 
    Combining \eqref{energy1} and \eqref{energy2} together, we observe that the nonlinear terms cancel each other, which results in 
    \be \la{ito3}
    \beg{aligned}
&d \left[\|u\|_{L^2}^2 + \|\na \Phi\|_{L^2}^2 \right]
+ 2\left[\|\na u\|_{L^2}^2 + D\|\rho\|_{L^2}^2 + D\sum\limits_{i=1}^{N} \|z_i \sqrt{c_i} \na \Phi\|_{L^2}^2 \right] dt
\\&\quad\quad\quad\quad= \|g\|_{L^2}^2 dt
+ 2(f,u)_{L^2}dt
+ 2(g,u)_{L^2} dW.
\end{aligned}
 \ee
We control the forcing term $(f,u)_{L^2}$ as follows,
\be 
2(f,u)_{L^2}
\le 2\|f\|_{L^2} \|u\|_{L^2}
\le C\|f\|_{L^2} \|\na u\|_{L^2}
\le \|\na u\|_{L^2}^2 + C\|f\|_{L^2}^2,
\ee 
which leads to the stochastic inequality
\be \la{expm1}
\beg{aligned}
&d \left[\|u\|_{L^2}^2 + \|\na \Phi\|_{L^2}^2 \right]
+ \left[\|\na u\|_{L^2}^2 + D\|\rho\|_{L^2}^2 + D\sum\limits_{i=1}^{N} \|z_i \sqrt{c_i} \na \Phi\|_{L^2}^2 \right] dt
\\&\quad\quad\quad\quad
\le \|g\|_{L^2}^2 dt
+ C\|f\|_{L^2}^2 dt
+ 2(g,u)_{L^2} dW.
\end{aligned}
 \ee
We integrate in time, take expectations on both sides, and obtain \eqref{ito1}.

We now proceed to prove \eqref{ito2}. We define the energies
\be \la{ED}
\mathcal{E} = \|u\|_{L^2}^2 + \|\na \Phi\|_{L^2}^2,
\quad \text{and} \quad 
\mathcal{D} = \|\na u\|_{L^2}^2 + D\|\rho\|_{L^2}^2 + D\sum\limits_{i=1}^{N} \|z_i \sqrt{c_i} \na \Phi\|_{L^2}^2,
\ee 
and rewrite \eqref{ito3} as
\be 
d \mathcal{E} 
+ 2\mathcal{D} dt
= \|g\|_{L^2}^2 dt
+ 2(f,u)_{L^2} dt
+ 2(g,u)_{L^2} dW.
\ee Applying It\^o's lemma to the stochastic process $X = \mathcal{E}^2$ gives
\be 
d \mathcal{E}^2 
+ 4\mathcal{E}\mathcal{D} dt
= 2\mathcal{E} \|g\|_{L^2}^2 dt
+ 2\mathcal{E} (f,u)_{L^2} dt
+ 4(g,u)_{L^2}^2 dt
+ 4 \mathcal{E} (g,u)_{L^2} dW.
\ee 
We then estimate 
\be 
2\mathcal{E} \|g\|_{L^2}^2
= 2\mathcal{E}^{\fr{1}{2}} \mathcal{E}^{\fr{1}{2}} \|g\|_{L^2}^2
\le C\mathcal{E}^{\fr{1}{2}} \mathcal{D}^{\fr{1}{2}} \|g\|_{L^2}^2
\le \mathcal{E} \mathcal{D} + C\|g\|_{L^2}^4,
\ee 
\be 
|2\mathcal{E} (f,u)_{L^2}|
\le 2\mathcal{E} \|u\|_{L^2}\|f\|_{L^2}
\le C\mathcal{E}^{\fr{3}{2}} \|f\|_{L^2}
\le  C\mathcal{E}^{\fr{3}{4}} \mathcal{D}^{\fr{3}{4}}\|f\|_{L^2}
\le \mathcal{E} \mathcal{D} + C\|f\|_{L^2}^4,
\ee
and
\be 
4(g,u)_{L^2}^2
\le 4\|g\|_{L^2}^2\|u\|_{L^2}^2
\le 4\|g\|_{L^2}^2 \mathcal{E}^{\fr{1}{2}} \mathcal{D}^{\fr{1}{2}}
\le \mathcal{E}\mathcal{D} + C\|g\|_{L^2}^4,
\ee 
where we have used the Poincar\'e inequality $\mathcal{E} \le C\mathcal{D}$ due to the vanishing of $u$ on the boundary and the elliptic regularity estimate $\|\na (\Phi-\gamma)\|_{L^2} \le C\|\rho\|_{L^2}$. Consequently, we obtain the stochastic differential inequality
\be 
d \mathcal{E}^2 
+ \mathcal{E}\mathcal{D} dt
\le C\|g\|_{L^2}^4 dt
+ C\|f\|_{L^2}^4  dt
+ 4 \mathcal{E} (g,u)_{L^2} dW,
\ee from which we deduce \eqref{ito2} after integrating in time from $0$ to $t$ and applying the expectation $\E$.
\end{proof}

\begin{prop}\label{prop8} Let $u_0 \in H$ and $c_i(0) \in L^2$ for all $i \in \left\{1, \dots, N\right\}$, and suppose the ionic concentrations are nonnegative. Under Setting \ref{setting1} it holds that
\be 
\beg{aligned}
\E \int_{0}^{T} \log (1 + \|\na c_i\|_{L^2}^2) ds
\le R_1(\|u_0\|_{L^2}, \|c_i(0) - \gamma_i\|_{L^2}, g)  + R_2(g, f)T,
\end{aligned}
\ee for $T \ge 0$, where $R_1$ is a positive constant depending only on  $\|u_0\|_{L^2}, \|c_i(0) - \gamma_i\|_{L^2}, g$, the parameters of the problems, and some universal constants, with the property that $R_1 = 0$ when $u_0  = 0$ and $c_i(0) = \gamma_i$, and $R_2$ is a positive constant depending only on $f, g$, the parameters of the problem and some universal constants. 
\end{prop}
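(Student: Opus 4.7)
The plan is to first establish the auxiliary moment bound
\[
\E \int_0^T \|\na c_i\|_{L^2}^2\, ds \le C_1 + C_2 T,
\]
with $C_1$ vanishing at the initial state $(0,\gamma_1,\ldots,\gamma_N)$, and then convert it into the stated logarithmic bound via Jensen's inequality. The starting point is the deterministic $L^2$ energy identity
\[
\fr{1}{2}\fr{d}{dt}\|c_i - \gamma_i\|_{L^2}^2 + D\|\na c_i\|_{L^2}^2 = -Dz_i\int_{\mathcal{O}} c_i \na \Phi \cdot \na c_i\, dx
\]
already used in the proof of Theorem~\ref{longt}. Estimating the right hand side naively produces a quadratic-in-$\|c_i-\gamma_i\|_{L^2}^2$ term that obstructs a direct Grönwall argument, so the pairing structure provided by Setting~\ref{setting1} is essential.

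Second, I would invoke the pairing hypothesis: each $i$ admits a partner $j$ with $z_j=-z_i$. Adding the energy identities for $c_i$ and $c_j$ and setting $\sigma_{ij}=c_i+c_j$, $\rho_{ij}=c_i-c_j$, the combined migration contribution reorganizes as
\[
-Dz_i \int_{\mathcal{O}} (c_i \na c_i - c_j \na c_j)\cdot \na \Phi\, dx = -\fr{Dz_i}{2}\int_{\mathcal{O}} \na(\sigma_{ij}\rho_{ij})\cdot \na\Phi\, dx.
\]
Integrating by parts using $\Phi|_{\pa\mathcal{O}}=\gamma$, $-\Delta\Phi=\rho$, and $\sigma_{ij}\rho_{ij}|_{\pa\mathcal{O}}=\gamma_i^2-\gamma_j^2$, this reduces to $-\tfrac{Dz_i}{2}\int_{\mathcal{O}}(c_i^2-c_j^2)\rho\, dx$ plus boundary contributions proportional to $\gamma_i^2-\gamma_j^2$. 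Splitting $c_k^2=(c_k-\gamma_k)^2+2\gamma_k(c_k-\gamma_k)+\gamma_k^2$, applying Ladyzhenskaya's interpolation to $c_k-\gamma_k$ (which vanishes on $\pa\mathcal{O}$), and invoking the elliptic bound $\|\na\Phi\|_{L^\infty}\le C\|\rho\|_{L^4}$ together with interpolation, I would arrive at a differential inequality of the form
\[
\fr{d}{dt}\sum_i \|c_i-\gamma_i\|_{L^2}^2 + D\sum_i \|\na c_i\|_{L^2}^2 \le C\bigl(\|\rho\|_{L^2}^2+1\bigr)\sum_i \|c_i-\gamma_i\|_{L^2}^2 + C\max_i\gamma_i^2\bigl(\|\rho\|_{L^2}^2+1\bigr),
\]
in which the coefficient $\max_i \gamma_i^2$ is small by the smallness of $\gamma_1,\dots,\gamma_N$ imposed in Setting~\ref{setting1}.

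The third step converts this pathwise estimate into moment bounds using an exponential moment for $\|\rho\|_{L^2}^2$. Applying It\^o's formula to $e^{\mu(\|u\|_{L^2}^2+\|\na\Phi\|_{L^2}^2)}$ in the identity \eqref{ito3}, the quadratic absorption produced by the dissipation term $\|\na u\|_{L^2}^2 + D\|\rho\|_{L^2}^2$ yields, for sufficiently small $\mu$ depending on $D$,
\[
\E\, e^{\mu\|\rho(t)\|_{L^2}^2} \le e^{\mu(\|u_0\|_{L^2}^2+\|\na\Phi_0\|_{L^2}^2)}\, e^{\kappa(\|f\|_{L^2},\|g\|_{L^2})\,t}.
\]
Combining this with a Grönwall inequality on the preceding differential inequality, the smallness of $\max_i \gamma_i^2$ together with either smallness of $f,g$ or largeness of $D$ (both built into Setting~\ref{setting1}) ensures that the Grönwall factor $e^{C\int_0^t\|\rho\|_{L^2}^2 ds}$ and the exponential moment balance, producing $\sup_{t\ge 0}\E\|c_i(t)-\gamma_i\|_{L^2}^2 \le C$. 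Integrating the energy identity in time and taking expectations then yields $\E\int_0^T\|\na c_i\|_{L^2}^2\, ds \le C_1+C_2 T$, with $C_1=0$ when the initial datum is $(0,\gamma_1,\ldots,\gamma_N)$.

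Finally, the logarithmic bound follows from two successive applications of Jensen's inequality exploiting the concavity of $\log$:
\[
\E\int_0^T\log(1+\|\na c_i\|_{L^2}^2)\, ds \le T\log\!\Bigl(1+\tfrac{1}{T}\E\int_0^T\|\na c_i\|_{L^2}^2\, ds\Bigr) \le T\log(1+C_2) + C_1,
\]
where the last step uses $\log(1+a+b/T)\le\log(1+a)+b/T$ for $a,b\ge 0$. Setting $R_1=C_1$ and $R_2=\log(1+C_2)$ yields the claim, with $R_1=0$ at the reference initial datum. The main obstacle is the third step: the competing time growth of $\E\, e^{\mu\|\rho(t)\|_{L^2}^2}$ (driven by $f,g$) and the Grönwall factor produced by the migration nonlinearity (driven by $\max_i\gamma_i^2$) must be made compatible, and the quantitative smallness built into Setting~\ref{setting1} is exactly what forces these two exponentials to balance and deliver a linear-in-$T$ bound.
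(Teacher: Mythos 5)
Your approach is genuinely different from the paper's, but it has a gap that is exactly the obstacle the paper's proof is designed to avoid. You propose to first establish the linear-in-time moment bound $\E\int_0^T\|\na c_i\|_{L^2}^2\,ds\le C_1+C_2T$ and then apply Jensen; this requires, at a minimum, a quadratic moment bound $\sup_t\E\|c_i(t)-\gamma_i\|_{L^2}^2\le C$, and actually (because the integrated migration term produces $\E\int\|\rho\|_{L^2}^2\|c_i-\gamma_i\|_{L^2}^2\,ds$) something closer to a fourth moment. Such polynomial moment bounds are \emph{not} available under the full Setting~\ref{setting1}. The exponential moment estimate \eqref{expm2} only holds for $\eta<\frac{1}{4\|(-\Delta)^{-1/2}g\|_{L^2}^2}$. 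Applying it to the Gr\"onwall bound from Lemma~\ref{valpair}, which carries the factor $\exp\{C_2D|z|^2\int_0^t\|\rho\|_{L^2}^2\,ds\}$, forces the constraint $2C_2|z|^2\le\eta$; combined with the upper bound on $\eta$, this requires $\|(-\Delta)^{-1/2}g\|_{L^2}$ to be small. The ``$D$ large, $g$ not small'' branch of Setting~\ref{setting1} does not satisfy this, so $\eta$ must be taken small, yielding only a \emph{fractional} moment $\E\|c_i-\gamma_i\|_{L^2}^{\eta/(C_2|z|^2)}$ with exponent possibly far less than~$2$. Your second and fourth moments are therefore simply unavailable in that regime, and the derivation of $\E\int_0^T\|\na c_i\|_{L^2}^2\,ds\le C_1+C_2T$ collapses. (This is precisely what Remark~\ref{quadraticremark} is flagging: the quadratic moment is an \emph{additional} conclusion under the extra hypothesis that $g$ is small.)

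The paper's proof circumvents this entirely. It decomposes $\log(1+\|\na c_i\|_{L^2}^2)=\log\bigl(\tfrac{1+\|\na c_i\|_{L^2}^2}{1+\|c_i-\gamma_i\|_{L^2}^2}\bigr)+\log(1+\|c_i-\gamma_i\|_{L^2}^2)$, bounds the first term by $\tfrac{\|\na c_i\|_{L^2}^2}{1+\|c_i-\gamma_i\|_{L^2}^2}$, and then treats the two pieces separately. Lemma~\ref{momentbound1} bounds $\E\int\tfrac{\|\na c_i\|_{L^2}^2}{1+\|c_i-\gamma_i\|_{L^2}^2}$ linearly in $T$ \emph{without any pairing}: dividing the differential inequality \eqref{prop5eq1} by $1+\|c_i-\gamma_i\|_{L^2}^2$ turns the quadratic Gr\"onwall factor $\|\na\Phi\|_{L^2}^2\|\rho\|_{L^2}^2\|c_i-\gamma_i\|_{L^2}^2$ into a simple additive term controlled by the moment bounds of Proposition~\ref{prop7}. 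Corollary~\ref{firstcor} then bounds $\E\int\log(1+\|c_i-\gamma_i\|_{L^2}^2)$ by a $T$-independent constant using the pairing structure (Lemma~\ref{valpair}), the exponential moment \eqref{expm2} at a \emph{small} admissible rate~$\eta$, and the inequality $\log(1+x)\le kx^\alpha$ with $\alpha=\eta/(C_2|z|^2)$ matched precisely to the available moment. The logarithm is what makes a fractional moment sufficient; your plan tries to produce a genuine quadratic moment, which the Setting does not grant. If you restrict Setting~\ref{setting1} by additionally assuming $g$ small enough for fourth moments of $\|c_i-\gamma_i\|_{L^2}^2$, your Jensen argument (which is itself correct) would go through, but the resulting statement would be strictly weaker than Proposition~\ref{prop8}.
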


\begin{proof} The proof follows from the calculation
\be 
\beg{aligned}
\E \int_{0}^{T} \log (1 + \|\na c_i\|_{L^2}^2) ds
&= \E \int_{0}^{T} \log \left(\fr{1 + \|\na c_i(s)\|_{L^2}^2}{1 + \|c_i (s)- \gamma_i\|_{L^2}^2}\right) ds 
+ \E\int_{0}^{T} \log \left(1  + \|c_i(s) - \gamma_i\|_{L^2}^2 \right) ds
\\&\le \E\int_{0}^{T} \fr{\|\na c_i(s)\|_{L^2}^2}{1 + \|c_i(s) - \gamma_i\|_{L^2}^2} ds + \E \int_{0}^{T} \log \left(1  + \|c_i(s) - \gamma_i\|_{L^2}^2 \right) ds,
\end{aligned}
\ee 
and Lemma \ref{momentbound1}, Corollary \ref{firstcor}, below.
\end{proof}

\beg{lem} \la{momentbound1}
Let $u_0 \in H$ and $c_i(0) \in L^2$ for all $i \in \left\{1, \dots, N\right\}$, and suppose the ionic concentrations are nonnegative. Under conditions \eqref{dcon}-- \eqref{upgradedcond}, it holds that 
\be \la{logcon}
\beg{aligned}
&\E \log \left(1 + \|c_i(t) - \gamma_i\|_{L^2}^2 \right)
+ D\E \int_{0}^{t} \fr{\|\na c_i(s)\|_{L^2}^2}{1+ \|c_i(s) - \gamma_i\|_{L^2}^2} ds
\\&\quad\quad\le \log \left(1 + \|c_i(0) - \gamma_i\|_{L^2}^2 \right)
+ CD|z_i|^2 \mathcal{E}(0)\left(|z_i|^2 \mathcal{E}(0)+ \gamma_i^2 \right)
\\&\quad\quad\quad\quad\quad\quad+ CD|z_i|^2 {\left(\|g\|_{L^2}^2 + \|f\|_{L^2}^2\right)}\left(|z_i|^2 {\left(\|g\|_{L^2}^2 + \|f\|_{L^2}^2\right)} + \gamma_i^2 \right)t
\end{aligned}
\ee for all $t \ge 0$ and $i \in \left\{1, \dots, N\right\}.$ Here,  
$
\mathcal{E}(0) = \|u_0\|_{L^2}^2 + \|\na \Phi_0\|_{L^2}^2.
$
\end{lem}

\begin{proof}
    We multiply the ionic concentration equations by $c_i - \gamma_i$, integrate over $\mathcal{O}$, and obtain the energy equality
    \be 
\fr{1}{2} \fr{d}{dt} \|c_i - \gamma_i\|_{L^2}^2 
+ D\|\na c_i\|_{L^2}^2 
= - Dz_i \int_{\mathcal{O}} (c_i - \gamma_i) \na \Phi \cdot \na c_i
- Dz_i \gamma_i \int_{\mathcal{O}} \na \Phi \cdot \na c_i.
\ee 
 By the Ladyzhenskaya's interpolation inequality, the H\"older inequality, and Young's inequality with exponents $4$ and $4/3$, we estimate
\be 
\beg{aligned}
&\left|Dz_i \int_{\mathcal{O}} (c_i - \gamma_i) \na \Phi \cdot \na c_i \right|
\le D|z_i| \|c_i - \gamma_i\|_{L^4} \|\na \Phi\|_{L^4} \|\na c_i\|_{L^2}
\\&\quad\quad\le CD|z_i| \|c_i - \gamma_i\|_{L^2}^{\fr{1}{2}} \|\na c_i\|_{L^2}^{\fr{1}{2}} \|\na \Phi\|_{L^2}^{\fr{1}{2}}\|\rho\|_{L^2}^{\fr{1}{2}} \|\na c_i\|_{L^2}
\le \fr{D}{4} \|\na c_i\|_{L^2}^2
+ CD|z_i|^4 \|\na \Phi\|_{L^2}^2 \|\rho\|_{L^2}^2\|c_i - \gamma_i\|_{L^2}^2.
\end{aligned}
\ee Due to the Cauchy-Schwarz inequality and the elliptic regularity satisfied by the electric potential $\Phi$, it follows that
\be 
\left| Dz_i \gamma_i \int_{\mathcal{O}} \na \Phi \cdot \na c_i\right|
\le D|z_i||\gamma_i| \|\na \Phi\|_{L^2} \|\na c_i\|_{L^2}
\le CD|z_i| \gamma_i \|\rho\|_{L^2} \|\na c_i\|_{L^2}
\le \fr{D}{4} \|\na c_i\|_{L^2}^2
+ CD|z_i|^2 \gamma_i^2 \|\rho\|_{L^2}^2.
\ee 
This yields the energy inequality
\be\label{prop5eq1}
\fr{d}{dt} \|c_i - \gamma_i\|_{L^2}^2
+ D\|\na c_i\|_{L^2}^2
\le CD|z_i|^4 \|\na \Phi\|_{L^2}^2 \|\rho\|_{L^2}^2 \|c_i - \gamma_i\|_{L^2}^2 
+ CD|z_i|^2 \gamma_i^2 \|\rho\|_{L^2}^2.
\ee Letting 
$
\mathcal{X} = \log \left(1 + \|c_i - \gamma_i\|_{L^2}^2 \right),
$
we have 
\be 
\fr{d}{dt} \mathcal{X}
+ \fr{D\|\na c_i\|_{L^2}^2}{1 + \|c_i - \gamma_i\|_{L^2}^2}
\le \fr{CD|z_i|^4 \|\na \Phi\|_{L^2}^2 \|\rho\|_{L^2}^2 \|c_i - \gamma_i\|_{L^2}^2 }{1 + \|c_i - \gamma_i\|_{L^2}^2} + \fr{CD|z_i|^2 \gamma_i^2 \|\rho\|_{L^2}^2}{1 + \|c_i - \gamma_i\|_{L^2}^2},
\ee and thus
\be 
\fr{d}{dt} \mathcal{X}
+ \fr{D\|\na c_i\|_{L^2}^2}{1 + \|c_i - \gamma_i\|_{L^2}^2}
\le CD|z_i|^4 \|\na \Phi\|_{L^2}^2 \|\rho\|_{L^2}^2 + CD|z_i|^2 \gamma_i^2 \|\rho\|_{L^2}^2.
\ee Finally, we integrate in time from $0$ to $t$, take expectations, use the bounds \eqref{ito1} and \eqref{ito2}, and obtain \eqref{logcon}.
\end{proof}

\beg{lem} Let $\eta \in \left(0, \fr{1}{4\|(-\Delta)^{-\fr{1}{2}} g\|_{L^2}^2}\right)$.  Let $u_0 \in H$ and $c_i(0) \in L^2$ for all $i \in \left\{1, \dots, N\right\}$, and suppose the ionic concentrations are nonnegative. Under conditions \eqref{dcon}--\eqref{upgradedcond}, there exists a positive universal constant $C_1$ such that  
\be \la{expm2}
\E \exp \left\{\fr{\eta D}{2} \int_{0}^{t} \|\rho(s)\|_{L^2}^2 ds \right\}
\le \exp \left\{C_1 \eta \left(\|u_0\|_{L^2}^2 + \|\na \Phi_0\|_{L^2}^2 + \|g\|_{L^2}^2 t + \|f\|_{L^2}^2t \right) \right\}
\ee holds for all $t \ge 0$.
\end{lem}

\begin{proof}
From \eqref{expm1}, we have 
\be 
d \mathcal{E} 
+ \mathcal{D} dt
\le \|g\|_{L^2}^2 dt
+ C\|f\|_{L^2}^2 dt
+ 2(g,u)_{L^2} dW
\ee where $\mathcal{E}$ and $\mathcal{D}$ are given by \eqref{ED}. We integrate the above inequality in time from $0$ to $t$, multiply by $\eta$, and obtain 
\be 
\eta \mathcal{E}(t) 
+ \fr{\eta}{2} \int_{0}^{t} \mathcal{D}(s) ds
\le \eta \mathcal{E}(0) +\eta \|g\|_{L^2}^2t + C\eta \|f\|_{L^2}^2 t
+ 2\eta \int_{0}^{t} (g,u)_{L^2} dW
- \fr{\eta}{2} \int_{0}^{t} \mathcal{D}(s) ds,
\ee which yields
\be 
\E \exp \left\{\fr{\eta D}{2} \int_{0}^{t} \|\rho(s)\|_{L^2} ds \right\}
\le \exp \left\{\eta \mathcal{E}(0) +\eta \|g\|_{L^2}^2t + C\eta \|f\|_{L^2}^2 t \right\} \E \exp \left\{ 2\eta \int_{0}^{t} (g,u)_{L^2} dW
- \fr{\eta}{2} \int_{0}^{t} \mathcal{D}(s) ds \right\}.
\ee 
In view of the exponential martingale identity \cite{schilling2014brownian}
\be 
\E \exp \left\{  \int_{0}^{t} 2\eta (g,u)_{L^2} dW
- \fr{1}{2} \int_{0}^{t} 4\eta^2 (g,u)_{L^2}^2 ds \right\} = 1,
\ee 
 and the estimate
\be 
2\eta^2 (g,u)_{L^2}^2 
\le 2\eta^2 \|(-\Delta)^{-\fr{1}{2}} g\|_{L^2}^2 \|\na u\|_{L^2}^2
\le 2\eta^2  \|(-\Delta)^{-\fr{1}{2}} g\|_{L^2}^2  \mathcal{D},
\ee we obtain \eqref{expm2}, provided that
$
2\eta^2 \|(-\Delta)^{-\fr{1}{2}} g\|_{L^2}^2 \le \fr{\eta}{2},
$ 
which is equivalent to 
$
0 < \eta \le \fr{1}{4 \|(-\Delta)^{-\fr{1}{2}} g\|_{L^2}^2}.
$ 
\end{proof}

\beg{lem} \la{valpair} Let $u_0 \in H$ and $c_i(0) \in L^2$ for all $i \in \left\{1, \dots, N\right\}$, and suppose the ionic concentrations are nonnegative. Furthermore, suppose that the valences of the ionic species obey $z_1 = -z_2 = z_3 = -z_4 = \dots = \pm z_N$. We denote their absolute value by $|z|$. Under conditions \eqref{dcon}--\eqref{upgradedcond}, there exist positive universal constants $c$ and $C_2$ such that 
\be \la{expm8}
\beg{aligned}
\|c_i(t) - \gamma_i\|_{L^2}^2
 \le \Gamma_0 \exp \left\{-D(c - C_2z^2\mathcal{M}) t\right\} \exp \left\{C_2
D|z|^2 \int_{0}^{t} \|\rho(s)\|_{L^2}^2 ds \right\} 
\end{aligned}, 
\ee for all $i \in \left\{1, \dots, N\right\}$ and $t \ge 0$, where
\be \la{inda}
\Gamma_0 = \sum\limits_{j=1}^{N-1} \left(\|c_j(0) - \gamma_j\|_{L^2}^2 + \|c_{j+1}(0) - \gamma_{j+1}\|_{L^2}^2 \right),
\ee and $\mathcal{M}$ is a positive constant depending only on the maximum value of the boundary data $\gamma_1, \dots, \gamma_N$.
\end{lem}

\begin{proof}
    For $j \in \left\{1, \dots, N\right\}$, we define
    \be 
\rho_j = c_{j+1} - c_j, \quad  \tilde{\rho}_j = \gamma_{j+1} - \gamma_j \quad \text{and}
\quad
\sigma_{j} = c_{j+1} + c_j, \quad \tilde{\sigma}_j = \gamma_{j+1} + \gamma_j.
    \ee 
The difference $\rho_j - \tilde{\rho}_j$ and $\sigma_j - \tilde{\sigma}_j$ obey the nonlinear nonlocal equations
    \be \la{dif1}
\pa_t (\rho_j - \tilde{\rho}_j) + u \cdot \na \rho_j - D\Delta \rho_j  = Dz_{j+1} \na \cdot (\sigma_j \na \Phi),
    \ee 
    \be \la{dif2}
\pa_t (\sigma_j - \tilde{\sigma}_j) + u \cdot \na \sigma_j - D\Delta \sigma_j
= Dz_{j+1} \na \cdot (\rho_j \na \Phi).
    \ee 
    We take the scalar product in $L^2$ of \eqref{dif1} and \eqref{dif2} with $\rho_j - \tilde{\rho}_j$ and $\sigma_j - \tilde{\sigma}_j$ respectively, add the resulting energy equalities, and obtain 
    \be 
    \beg{aligned}
&\fr{1}{2} \fr{d}{dt} \left(\|\rho_j - \tilde{\rho}_j\|_{L^2}^2 + \|\sigma_j - \tilde{\sigma}_j\|_{L^2}^2 \right)
+ D \left(\|\na \rho_j\|_{L^2}^2 + \|\na \sigma_j\|_{L^2}^2
\right) 
\\&\quad\quad= - Dz_{j+1} \int_{\mathcal{O}} \sigma_j \na \Phi \cdot \na (\rho_j - \tilde{\rho}_j) dx
+ Dz_{j+1} \int_{\mathcal{O}} \na \cdot (\rho_j \na \Phi) (\sigma_j - \tilde{\sigma}_j) dx
\\&\quad\quad=  - Dz_{j+1} \int_{\mathcal{O}} \sigma_j \na \Phi \cdot \na (\rho_j - \tilde{\rho}_j) dx
\\&\quad\quad\quad\quad+ Dz_{j+1} \int_{\mathcal{O}} (\na \rho_j \cdot \na \Phi)(\sigma_j - \tilde{\sigma}_j) dx
+ Dz_{j+1} \int_{\mathcal{O}} (\rho_j \Delta \Phi) (\sigma_j - \tilde{\sigma}_j) dx
\\&\quad\quad=  - Dz_{j+1} \int_{\mathcal{O}} \sigma_j \na \Phi \cdot \na \rho_j dx 
+ Dz_{j+1} \int_{\mathcal{O}} (\na \rho_j \cdot \na \Phi) \sigma_j dx
\\&\quad\quad\quad\quad- Dz_{j+1} \int_{\mathcal{O}} (\na \rho_j \cdot \na \Phi)\tilde{\sigma}_j dx
- Dz_{j+1} \int_{\mathcal{O}} \rho_j \rho (\sigma_j - \tilde{\sigma}_j) dx,
    \end{aligned}
    \ee which reduces to
    \be 
    \beg{aligned}
&\fr{1}{2} \fr{d}{dt} \left(\|\rho_j - \tilde{\rho}_j\|_{L^2}^2 + \|\sigma_j - \tilde{\sigma}_j\|_{L^2}^2 \right)
+ D \left(\|\na \rho_j\|_{L^2}^2 + \|\na \sigma_j\|_{L^2}^2
\right) 
\\&\quad\quad=- Dz_{j+1} \int_{\mathcal{O}} (\na \rho_j \cdot \na \Phi)\tilde{\sigma}_j dx
- Dz_{j+1} \int_{\mathcal{O}} (\rho_j - \tilde{\rho}_j) \rho (\sigma_j - \tilde{\sigma}_j) dx
- Dz_{j+1} \int_{\mathcal{O}} \tilde{\rho}_j \rho (\sigma_j - \tilde{\sigma}_j) dx.
    \end{aligned}
    \ee
We then compute the following bounds
\begin{align}
&\left|- Dz_{j+1} \int_{\mathcal{O}} (\na \rho_j \cdot \na \Phi)\tilde{\sigma}_j dx \right|
\le D|z||\tilde{\sigma}_j| \|\na \rho_j\|_{L^2} \|\na \Phi\|_{L^2}\\
&\quad \quad
\le CD|z||\tilde{\sigma}_j| \|\na \rho_j\|_{L^2} \|\rho \|_{L^2}
\le \fr{D}{8} \|\na \rho_j\|_{L^2}^2
+ CD|z|^2 \tilde{\sigma}_j^2 \|\rho\|_{L^2}^2,\\
&\left| - Dz_{j+1} \int_{\mathcal{O}} (\rho_j - \tilde{\rho}_j) \rho (\sigma_j - \tilde{\sigma}_j) dx \right|
\le D|z| \|\rho\|_{L^2} \|\rho_j - \tilde{\rho}_j\|_{L^4} \|\sigma_j - \tilde{\sigma}_j\|_{L^4}
\\&\quad\quad\le CD|z| \|\rho\|_{L^2} \|\rho_j - \tilde{\rho}_j\|_{L^2}^{\fr{1}{2}}\|\na \rho_j\|_{L^2}^{\fr{1}{2}} \|\sigma_j - \tilde{\sigma}_j\|_{L^2}^{\fr{1}{2}}\|\na\sigma_j\|_{L^2}^{\fr{1}{2}}
\\&\quad\quad\le \fr{D}{8} \left(\|\na \rho_j\|_{L^2}^2 + \|\na \sigma_j\|_{L^2}^2 \right) 
+ CD|z|^2 \|\rho\|_{L^2}^2 \left(\|\rho_j - \tilde{\rho}_j\|_{L^2}^2 +  \|\sigma_j - \tilde{\sigma}_j\|_{L^2}^2\right),\\
&\left|Dz_{j+1} \int_{\mathcal{O}} \tilde{\rho}_j \rho (\sigma_j - \tilde{\sigma}_j) dx \right|
\le D|z||\tilde{\rho}_j| \|\rho\|_{L^2} \|\sigma_j - \tilde{\sigma}_j\|_{L^2}
\le \fr{1}{2}D|z|^2 |\tilde{\rho}_j| \|\rho\|_{L^2}^2 
+ \fr{1}{2}D|z|^2 |\tilde{\rho}_j| \|\sigma_j - \tilde{\sigma}_j\|_{L^2}^2,
\end{align}
using elliptic regularity estimates, Ladyzshenskaya's interpolation inequality, and Young's inequality for products. Consequently, we deduce the evolution
\be 
\beg{aligned}
&\fr{d}{dt} \left(\|\rho_j - \tilde{\rho}_j\|_{L^2}^2 + \|\sigma_j - \tilde{\sigma}_j\|_{L^2}^2 \right)
+ D \left(\|\na \rho_j\|_{L^2}^2 + \|\na \sigma_j\|_{L^2}^2
\right) 
\\&\quad\quad\le CD|z|^2 \left(|\tilde{\rho}_j| + \tilde{\sigma}_j^2\right) \|\rho\|_{L^2}^2
+ CD|z|^2 |\tilde{\rho}_j| \|\sigma_j - \tilde{\sigma}_j\|_{L^2}^2
+ CD|z|^2 \|\rho\|_{L^2}^2 \left(\|\rho_j - \tilde{\rho}_j\|_{L^2}^2 +  \|\sigma_j - \tilde{\sigma}_j\|_{L^2}^2\right).
\end{aligned}
\ee In view of the parallelogram law
\be 
2\|a\|_{L^2}^2 + 2\|b\|_{L^2}^2 = \|a+b\|_{L^2}^2 + \|a-b\|_{L^2}^2
\ee applied to $a = \rho_j - \tilde{\rho}_j$ and $b = \sigma_j - \tilde{\sigma}_j$, and to $a = \nabla \rho_j$ and $b = \nabla \sigma_j$, we obtain
\be
\beg{aligned}
&\fr{d}{dt} \left(\|c_j - \gamma_j\|_{L^2}^2 + \|c_{j+1} - \gamma_{j+1}\|_{L^2}^2 \right)
+ D \left(\|\na c_j\|_{L^2}^2 + \|\na c_{j+1}\|_{L^2}^2
\right) \\
&\quad\quad\quad\quad
\le CDz^2 \left(|\tilde{\rho}_j| + \tilde{\sigma}_j^2\right) \|\rho\|_{L^2}^2
+ \left(CD|z|^2  \|\rho\|_{L^2}^2  + CD|z|^2 |\tilde{\rho}_j|\right) \left(\|c_j - \gamma_j\|_{L^2}^2 +  \|c_{j+1} - \gamma_{j+1}\|_{L^2}^2\right).
\end{aligned} 
\ee 
We sum over the indices $j \in \left\{1, \dots, N-1\right\}$ and deduce that
\be \la{expm5}
\beg{aligned}
&\fr{d}{dt} \sum\limits_{j=1}^{N-1} \left(\|c_j - \gamma_j\|_{L^2}^2 + \|c_{j+1} - \gamma_{j+1}\|_{L^2}^2 \right)
+ D \sum\limits_{j=1}^{N-1} \left(\|\na c_j\|_{L^2}^2 + \|\na c_{j+1}\|_{L^2}^2
\right) \\
&\quad\quad\quad\quad \le CDz^2\mathcal{M}  \|\rho\|_{L^2}^2
+ \left(CD|z|^2  \|\rho\|_{L^2}^2  + CD|z|^2 \mathcal{M} \right) \sum\limits_{j=1}^{N-1} \left(\|c_j - \gamma_j\|_{L^2}^2 +  \|c_{j+1} - \gamma_{j+1}\|_{L^2}^2\right),
\end{aligned} 
\ee 
where
$\mathcal{M} = \max\limits_{1 \le j \le N-1} \left(|\tilde{\rho}_j| + \tilde{\sigma}_j^2 \right).$
The charge density $\rho$ obeys 
\be 
\|\rho\|_{L^2}^2 = \left\| \sum\limits_{i=1}^{N} z_i (c_i - \gamma_i) \right\|_{L^2}^2 
\le C |z|^2  \sum\limits_{i=1}^{N} \|c_i - \gamma_i\|_{L^2}^2
\le C|z|^2  \sum\limits_{j=1}^{N-1} \left(\|c_j - \gamma_j\|_{L^2}^2 +  \|c_{j+1} - \gamma_{j+1}\|_{L^2}^2\right).
\ee This allows us to rewrite \eqref{expm5} as 
\be 
\fr{d}{dt} \mathcal{A}_N + D\left(c -  Cz^2\mathcal{M} -  Cz^2\|\rho\|_{L^2}^2 \right) \mathcal{A}_N \le 0,
\ee 
after making use of the Poincar\'e inequality, where
$\mathcal{A}_N = \sum\limits_{j=1}^{N-1} \left(\|c_j - \gamma_j\|_{L^2}^2 +  \|c_{j+1} - \gamma_{j+1}\|_{L^2}^2\right),$
and $c$ is the Poincar\'e constant. We multiply the above inequality by the integrating factor
$\exp \left\{D\left(c -  Cz^2\mathcal{M} -  Cz^2\|\rho\|_{L^2}^2 \right) t\right\},$
integrate in time from $0$ to $t$, and conclude that
\be 
\mathcal{A}_N
\le \mathcal{A}_N(0) \exp \left\{-D(c - Cz^2 \mathcal{M}) t\right\} \exp \left\{CD|z|^2 \int_{0}^{t} \|\rho(s)\|_{L^2}^2 ds \right\}.
\ee 
\end{proof}

\beg{rem} Lemma \ref{valpair} holds under different conditions imposed on the valences {and diffusivities}: 
\begin{enumerate}
\item If the number of ionic species $N$ is even, then the proof of \eqref{expm8} works under the valences pairing condition $z_1 = -z_2$, $z_3 = -z_4, \dots, z_{N-1} = - z_N$ {and diffusivities pairing condition $D_1 = D_2, \dots, D_{N-1} = D_N$.} In this latter case, one can define 
\be \la{even12}
\rho_j = c_{2j} - c_{2j-1}, \quad \tilde{\rho}_j = \gamma_{2j} - \gamma_{2j-1},
\quad\text{and}\quad
\sigma_{j} = c_{2j} + c_{2j-1}, \quad \tilde{\sigma}_j = \gamma_{2j} + \gamma_{2j-1},
    \ee 
study the evolution of $\rho_j - \tilde{\rho}_j$ and $\sigma_j - \tilde{\sigma}_j$, sum over all indices $j \in \left\{1, \dots, N/2\right\}$, and obtain good control of the norms $\|c_i - \gamma_i\|_{L^2}$.
\item If the number of ionic species $N$ is odd, and the pairing conditions $z_1 = -z_2$, $z_3 = -z_4, \dots, z_{N-2} = - z_{N-1}$, $z_{N} = - z_k$  and {$D_1 = D_2, \dots, D_{N-2} = D_{N-1}, D_N = D_k$} hold for some $k \in \left\{1, \dots, N-1 \right\}$,  then one can define $\rho_j$, $\tilde{\rho}_j$, $\sigma_{j}$, and $\tilde{\sigma}_j$ for all integers $j \in \left\{1, \dots, \frac{N-1}2 \right\}$ as in \eqref{even12}, and
\be 
\rho_{\frac{N+1}2} = c_{N} - c_{k}, \quad \tilde{\rho}_{\frac{N+1}2} = \gamma_{N} - \gamma_{k} \quad \text{and} \quad
\sigma_{\frac{N+1}2} = c_{N} + c_{k}, \quad \tilde{\sigma}_{\frac{N+1}2} = \gamma_{N} + \gamma_{k},
    \ee 
and repeat the same argument as in Lemma \ref{valpair}.
\end{enumerate}
{More generally,} the result of Lemma \ref{valpair} holds whenever each $i$-th ionic species with valence $z_i$ {and diffusivity $D_i$} can be paired with a $j$-th ionic species with valence $z_j = - z_i$ {and diffusivity $D_j = D_i$}.  
\end{rem}

\beg{cor} \la{firstcor} 
Let $u_0 \in H$ and $c_i(0) \in L^2$ for all $i \in \left\{1, \dots, N\right\}$, and suppose the ionic concentrations are nonnegative. Suppose that Setting \ref{setting1} holds.
Then for $0<\eta < \fr{1}{4\|(-\Delta)^{-\fr{1}{2}} g \|_{L^2}^2} $, there exists a positive constant $C_3$ depending on $\eta$, the initial data, and the parameters of the S-NPNS system, such that for all $i \in \left\{1, \dots, N\right\}$, the estimate below holds
\be 
\int_{0}^{t} \E \log \left(1 + \|c_i- \gamma_i\|_{L^2}^2 \right) ds
\le C_3.
\ee
\end{cor}

\begin{proof} We denote by $|z|$ the maximum value of $|z_1|, \dots, |z_N|$. It is clear that for any $\alpha\in(0,1]$ there exists a constant $k>0$ depending on $\alpha$ such that the inequality $\log(1+x) \leq kx^\alpha$ holds for all $x \ge 0$.
    Consequently, it holds that
    \be \label{logestimate}
\log \left(1 + \|c_i(t)- \gamma_i\|_{L^2}^2 \right) 
\le K\|c_i(t) - \gamma_i\|_{L^2}^{\fr{\eta}{C_2|z|^2}}, 
    \ee for all $t \ge 0$, where $K$ is a positive constant depending on $\eta, |z|$ and some universal constants, and $C_2$ is the constant in estimate \eqref{expm8}. Due to \eqref{expm8}, we have 
    \be 
\|c_i(t) - \gamma_i\|_{L^2}^{\fr{\eta}{C_2 |z|^2}}
\le \Gamma_0^{\fr{\eta}{2C_2|z|^2}} \exp \left\{-\fr{\eta D}{2}(cC_2^{-1}|z|^{-2} - \mathcal{M}) t\right\} \exp \left\{\fr{\eta D}{2} \int_{0}^{t} \|\rho(s)\|_{L^2}^2 ds \right\}, 
    \ee where $\Gamma_0$ is given by \eqref{inda}.
    Now we apply the expectation $\E$ on both sides and use \eqref{expm2} to obtain 
    \be 
    \beg{aligned}
\E \|c_i(t) - \gamma_i\|_{L^2}^{\fr{\eta}{C_2|z|^2}}
&\le \Gamma_0^{\fr{\eta}{2C_2|z|^2}} \exp \left\{-\fr{\eta D}{2}(cC_2^{-1}|z|^{-2} - \mathcal{M}) t\right\} \exp \left\{C_1 \eta \left(\|u_0\|_{L^2}^2 + \|\na \Phi_0\|_{L^2}^2 + \|g\|_{L^2}^2 t + \|f\|_{L^2}^2t \right) \right\}
\\&= \Gamma_{00} \exp \left\{-\fr{c\eta D}{4C_2|z|^2} t\right\} \exp \left\{-\fr{c\eta D}{4C_2|z|^2}t  + \fr{\eta \mathcal{M} D }{2} t + C_1 \eta\|g\|_{L^2}^2 t + C_1 \eta \|f\|_{L^2}^2t \right\},
\end{aligned}
    \ee 
where
$\Gamma_{00} = \Gamma_0^{\fr{\eta}{2C_2|z|^2}} \exp \left\{C_1 \eta \left(\|u_0\|_{L^2}^2 + \|\na \Phi_0\|_{L^2}^2\right) \right\}$ 
    is a constant depending only on the initial data. Finally, if the body forces $f$, noise $g$, and boundary data $\gamma_1, \dots, \gamma_N$ are sufficiently small so that 
    \be 
\fr{\mathcal{M} D }{2}  + C_1 \|g\|_{L^2}^2  + C_1  \|f\|_{L^2}^2 \le \fr{cD}{4C_2|z|^2},
    \ee 
    we have 
    \be 
\E \log \left(1 + \|c_i- \gamma_i\|_{L^2}^2 \right)  \le K \Gamma_{00} \exp \left\{-\fr{c\eta D}{4C_2|z|^2} t\right\},
    \ee which gives
    \be 
\int_{0}^{t} \E \log \left(1 + \|c_i- \gamma_i\|_{L^2}^2 \right) ds
\le \fr{4C_2z^2 K \Gamma_{00} }{c\eta D}, 
    \ee after integrating in time. Thus, we obtain the desired result.
\end{proof}

{
\beg{rem} \label{quadraticremark}
If $g$ is assumed to be sufficiently small, then $\eta$ can be chosen to be large. In particular, one can take $\eta = 2C_2|z|^2$ in \eqref{logestimate} and deduce the quadratic moment estimate
\be \la{quadraticmoment}
\sum\limits_{i=1}^{N} \int_{0}^{t} \E \|c_i(s)  - \gamma_i\|_{L^2}^2 ds \le C_3',
\ee for any $t \ge 0$. Here $C_3'$ is a positive constant depending only on $\|c_i(0) - \gamma_0\|_{L^2}$, the parameters of the problem, and some universal constants. The bound \eqref{quadraticmoment} will be used later to study the regularity of any invariant measure for the S-NPNS problem. 
\end{rem}
}

{
\beg{rem} We observe that diffusivities are influenced by the specific environment in which the phenomenon is investigated. For example, when a substance is exposed to elevated temperatures, its diffusion coefficient tends to increase. Therefore, it is reasonable to expect higher diffusivities when certain parameters of the problem are altered.
\end{rem}
}

\subsubsection{Moment bounds under Setting \ref{setting2}} In this subsection, we present one main proposition required for the proof of Theorem \ref{kbprocedure} under Setting \ref{setting2}.

\begin{prop} \label{prop9} Let $u_0 \in H$ and $c_i(0) \in L^2$ for all $i \in \left\{1, \dots, N\right\}$, and suppose the ionic concentrations are nonnegative. Under Setting \ref{setting2}, it holds that 
\be 
\sum\limits_{i=1}^{N} \|c_i(t) - \gamma_i\|_{L^2}^2
\le C \left(\sum\limits_{j=1}^{N} \|c_j(0) - \gamma_j\|_{L^2}^2 \right) e^{C\sum\limits_{j=1}^{N} \|c_j(0)  - \gamma_j\|_{L^2}^4} e^{-cDt},
\ee and 
\be \label{gradientuniformcontrol}
\beg{aligned}
\sum\limits_{i=1}^{N} \int_{0}^{t}  D \|\na c_i(s)\|_{L^2}^2 ds
\le  C \left(\sum\limits_{j=1}^{N} \|c_j(0) - \gamma_j\|_{L^2}^2 \right) e^{C\sum\limits_{j=1}^{N} \|c_j(0)  - \gamma_j\|_{L^2}^4}
\end{aligned},
\ee for $t \ge 0$, where $C$ is a positive constant depending only on the parameters of the problem. 
\end{prop}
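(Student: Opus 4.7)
The plan is to exploit the equal-absolute-valence hypothesis by coupling the $L^2$ evolutions of the charge density $\rho=\sum_{i=1}^N z_ic_i$ and the total concentration $\tilde\rho=\sum_{i=1}^N c_i$. Summing \eqref{s210} with and without the weights $z_i$ and using $z_i^2=z^2$ yields
\begin{align}
\partial_t\rho+u\cdot\nabla\rho-D\Delta\rho &= Dz^2\nabla\cdot(\tilde\rho\nabla\Phi), \\
\partial_t\tilde\rho+u\cdot\nabla\tilde\rho-D\Delta\tilde\rho &= D\nabla\cdot(\rho\nabla\Phi),
\end{align}
with $\rho|_{\partial\mathcal{O}}=0$ by \eqref{upgradedcond} and $\tilde\rho|_{\partial\mathcal{O}}=\tilde\gamma:=\sum_i\gamma_i$. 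First I would test the $\rho$-equation against $\rho$ and the $\tilde\rho$-equation (pre-multiplied by $z^2$) against $\tilde\rho-\tilde\gamma$. The advection terms vanish by incompressibility and the Dirichlet condition on $u$, and the electromigration terms combine via the algebraic identity $\tilde\rho\nabla\rho+\rho\nabla\tilde\rho=\nabla(\rho\tilde\rho)$ into
\begin{equation}
-Dz^2\int_{\mathcal{O}}\nabla\Phi\cdot\nabla(\rho\tilde\rho)\,dx \;=\; -Dz^2\int_{\mathcal{O}}\rho^2\tilde\rho\,dx,
\end{equation}
after using $-\Delta\Phi=\rho$ and the vanishing of $\rho$ on $\partial\mathcal{O}$. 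Since $\tilde\rho\geq 0$ by nonnegativity of the concentrations, this term is favorably signed; dropping it and applying Poincar\'e's inequality on $\rho$ and $\tilde\rho-\tilde\gamma$ (both boundary-vanishing) gives the exponential decay
\begin{equation}
\|\rho(t)\|_{L^2}^2 + z^2\|\tilde\rho(t)-\tilde\gamma\|_{L^2}^2 \le A_0\,e^{-cDt},\qquad A_0:=\|\rho_0\|_{L^2}^2+z^2\|\tilde\rho_0-\tilde\gamma\|_{L^2}^2,
\end{equation}
together with the uniform-in-$t$ bound $\int_0^\infty\|\rho\|_{L^2}^p\,ds \le C(A_0,D)$ for $p\in\{2,4\}$.

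Next, I would revisit each individual concentration by testing its equation against $c_i-\gamma_i$, as in the derivation of \eqref{prop5eq1}. Splitting $c_i=(c_i-\gamma_i)+\gamma_i$ inside the electromigration, invoking the elliptic regularity $\|\nabla\Phi\|_{L^4}\le C\|\rho\|_{L^2}$, Ladyzhenskaya's interpolation, and Young's inequality to absorb the dissipation, I obtain
\begin{equation}
\tfrac{d}{dt}\|c_i-\gamma_i\|_{L^2}^2 + D\|\nabla c_i\|_{L^2}^2 \le CD\|\rho\|_{L^2}^4\|c_i-\gamma_i\|_{L^2}^2 + CD\gamma_i^2\|\rho\|_{L^2}^2,
\end{equation}
with constants depending only on $z$ and the domain. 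Summing in $i$, invoking Poincar\'e on each $c_i-\gamma_i$ to compare the dissipation with $Y(t):=\sum_i\|c_i-\gamma_i\|_{L^2}^2$, and applying Gronwall together with the uniform bound $\int_0^\infty\|\rho\|_{L^2}^4\,ds\le CA_0^2/D$ from Step~1 would yield $Y(t) \le C(Y(0)+A_0)\,e^{CA_0^2}\,e^{-cDt}$. Rewriting $\rho_0=\sum_i z_i(c_i(0)-\gamma_i)$ via \eqref{upgradedcond} and $\tilde\rho_0-\tilde\gamma=\sum_i(c_i(0)-\gamma_i)$ gives $A_0\le C\sum_j\|c_j(0)-\gamma_j\|_{L^2}^2$ and $A_0^2\le C\sum_j\|c_j(0)-\gamma_j\|_{L^2}^4$, which puts the prefactor in the stated form. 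The integrated gradient bound \eqref{gradientuniformcontrol} then follows by integrating the same differential inequality in time and controlling the two $\|\rho\|_{L^2}$-moments on the right by the same constants.

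The hard part will be Step~1: recognizing the correct pairing $(\rho,z^2\tilde\rho)$ and seeing that the divergence structure of the electromigration, combined with the Poisson relation $-\Delta\Phi=\rho$ and the nonnegativity $\tilde\rho\ge 0$, collapses a would-be indefinite nonlinear term into a strictly dissipative one. The equal-absolute-valence hypothesis is essential here: otherwise the $z_i^2$ factors in the $\rho$-equation do not reduce to a single constant, and the identity $\tilde\rho\nabla\rho+\rho\nabla\tilde\rho=\nabla(\rho\tilde\rho)$ fails to produce the cancellation. Once the decay of $\|\rho\|_{L^2}$ is secured, the remaining Gronwall argument is routine, since every time integral of a positive power of $\|\rho\|_{L^2}$ is then finite uniformly in the terminal time.
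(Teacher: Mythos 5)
Your proposal is correct and follows essentially the same strategy as the paper: you couple the $L^2$ evolutions of $\rho$ and $\tilde\rho$ (testing against $\rho$ and $\tilde\rho-\tilde\gamma$ respectively, with the weights $1$ and $z^2$ playing the role of the paper's $1/z^2$ and $1$), observe that the electromigration terms collapse to $-Dz^2\int_{\mathcal{O}}\rho^2\tilde\rho\,dx\le 0$ via the Poisson relation and the nonnegativity of $\tilde\rho$, obtain the exponential $L^2$-decay of $\rho$, and then feed that decay into the per-species inequality \eqref{prop5eq1} via an integrating factor/Gronwall argument. The only cosmetic difference is your explicit use of the product rule identity $\tilde\rho\nabla\rho+\rho\nabla\tilde\rho=\nabla(\rho\tilde\rho)$ to exhibit the cancellation, whereas the paper expands $\nabla\cdot(\tilde\rho\nabla\Phi)$ directly; both lead to the same dissipative term.
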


\begin{proof} The proof is divided into two main steps. 

\smallskip
\noindent{\bf{Step 1. Charge density bounds.}} The charge density $\rho$ obeys
\be \la{unitvalence}
\fr{1}{z^2}\pa_t \rho + \fr{1}{z^2} u \cdot \na \rho - \fr{D}{z^2} \Delta \rho 
= D \sum\limits_{i=1}^{N} \na \cdot (c_i \na \Phi)
= D \na \cdot (\tilde \rho \na \Phi), 
\ee 
where 
$
\tilde{\rho} = \sum\limits_{i=1}^{N} c_i.
$ 
Here the condition $|z_i|=z>0$ is used. The sum of the ionic concentrations $\tilde{\rho}$ evolves according to 
\be \la{unitvalence2}
\pa_t{\tilde{\rho}} + u \cdot \na \tilde{\rho} - D\Delta \tilde{\rho} = D \sum\limits_{i=1}^{N} \na \cdot (z_ic_i \na \Phi)
= D \na \cdot (\rho \na \Phi).
\ee We take the scalar product in $L^2$ of the equation \eqref{unitvalence} obeyed by $\rho$ with $\rho - \sum\limits_{i=1}^{N} z_i \gamma_i$ and the equation \eqref{unitvalence2} obeyed by $\tilde{\rho}$ with $\tilde{\rho} -\sum\limits_{i=1}^{N} \gamma_i$. We sum the resulting energy equalities, integrate by parts, and use condition \eqref{upgradedcond} to obtain 
\be 
\beg{aligned}
&\fr{1}{2} \fr{d}{dt} \left(\fr{1}{z^2}\left\|\sum\limits_{i=1}^{N} z_i(c_i - \gamma_i) \right\|_{L^2}^2 + \left\|\sum\limits_{i=1}^{N} (c_i - \gamma_i) \right\|_{L^2}^2  \right)
+ D \left(\fr{1}{z^2}\left\|\sum\limits_{i=1}^{N} z_i \na c_i\right\|_{L^2}^2 + \left\|\sum\limits_{i=1}^{N} \na c_i \right\|_{L^2}^2  \right)
\\&\quad\quad\quad\quad= D\int_{\mathcal{O}} \na \cdot (\tilde{\rho} \na \Phi) \rho dx
- D\int_{\mathcal{O}} \rho \na \Phi \cdot \na \tilde{\rho} dx
\\&\quad\quad\quad\quad= D\int_{\mathcal{O}} (\na \tilde{\rho} \cdot \na \Phi) \rho dx
+ D\int_{\mathcal{O}} \tilde{\rho} \Delta \Phi \rho dx
- D\int_{\mathcal{O}} \rho  \na \Phi \cdot  \na \tilde{\rho} dx,
\end{aligned}
\ee which reduces to 
\be \la{gradcharge}
\fr{1}{2} \fr{d}{dt} \left(\fr{1}{z^2} \left\|\sum\limits_{i=1}^{N} z_i(c_i - \gamma_i) \right\|_{L^2}^2 + \left\|\sum\limits_{i=1}^{N} (c_i - \gamma_i) \right\|_{L^2}^2  \right)
+ D \left(\fr{1}{z^2} \left\|\sum\limits_{i=1}^{N} z_i \na c_i\right\|_{L^2}^2 + \left\|\sum\limits_{i=1}^{N} \na c_i \right\|_{L^2}^2  \right) + D\|\rho \sqrt{\tilde{\rho}} \|_{L^2}^2 =0.
\ee 
In view of the Poincar\'e inequality, we have 
\be 
 \fr{d}{dt} \left(\fr{1}{z^2} \left\|\sum\limits_{i=1}^{N} z_i(c_i - \gamma_i) \right\|_{L^2}^2 + \left\|\sum\limits_{i=1}^{N} (c_i - \gamma_i) \right\|_{L^2}^2  \right)
+ 2cD \left(\fr{1}{z^2}\left\|\sum\limits_{i=1}^{N} z_i(c_i - \gamma_i) \right\|_{L^2}^2 + \left\|\sum\limits_{i=1}^{N} (c_i - \gamma_i) \right\|_{L^2}^2  \right) \le 0.
\ee  We deduce that $\rho$ and $\tilde{\rho}$ decay exponentially in time to their boundary data and obey
\be 
\fr{1}{z^2} \left\|\sum\limits_{i=1}^{N} z_i(c_i(t) - \gamma_i) \right\|_{L^2}^2 + \left\|\sum\limits_{i=1}^{N} (c_i(t) - \gamma_i) \right\|_{L^2}^2  
\le \left(\fr{1}{z^2} \left\|\sum\limits_{i=1}^{N} z_i(c_i(0) - \gamma_i) \right\|_{L^2}^2 + \left\|\sum\limits_{i=1}^{N} (c_i(0) - \gamma_i) \right\|_{L^2}^2  \right) e^{-2cDt}
\ee for all $t \ge 0$. Therefore, it holds that, for all $t \ge 0$, 
\be \la{specialcase1}
\|\rho(t)\|_{L^2}^2 \le \left( \left\|\sum\limits_{i=1}^{N} z_i(c_i(0) - \gamma_i) \right\|_{L^2}^2 + z^2 \left\|\sum\limits_{i=1}^{N} (c_i(0) - \gamma_i) \right\|_{L^2}^2  \right) e^{-2cDt}.
\ee

\smallskip
\noindent{\bf{Step 2. Ionic concentrations bounds.}} For $i \in \left\{1, \dots, N \right\}$, the $L^2$ norm of $c_i - \gamma_i$ evolves according to 
\be \la{specialcase3}
\fr{d}{dt} \|c_i - \gamma_i\|_{L^2}^2
+ D\|\na c_i\|_{L^2}^2
\le CD \|\na \Phi\|_{L^2}^2 \|\rho\|_{L^2}^2 \|c_i - \gamma_i\|_{L^2}^2 
+ CD \gamma_i^2 \|\rho\|_{L^2}^2,
\ee as shown in \eqref{prop5eq1}. From \eqref{specialcase3}, we have 
\be \la{specialcase4}
\fr{d}{dt} \|c_i - \gamma_i\|_{L^2}^2
+ D\|\na c_i\|_{L^2}^2
\le CD \|\rho\|_{L^2}^4 \|c_i - \gamma_i\|_{L^2}^2 
+ CD \gamma_i^2 \|\rho\|_{L^2}^2
\ee after making use of elliptic regularity estimates. By the Poincar\'e inequality, we obtain 
\be 
\fr{d}{dt} \|c_i - \gamma_i\|_{L^2}^2
+ D\left(c - C\|\rho\|_{L^2}^4 \right) \|c_i - \gamma_i\|_{L^2}^2
\le CD \gamma_i^2 \|\rho\|_{L^2}^2.
\ee We multiply by the integrating factor $e^{cDt - CD\int_{0}^{t} \|\rho\|_{L^2}^4} ds$, integrate in time from $0$ to $t$, use the decaying estimate \eqref{specialcase1}, and infer that 
\be 
\|c_i(t) - \gamma_i\|_{L^2}^2 
\le C \left(\sum\limits_{j=1}^{N} \|c_j(0) - \gamma_j\|_{L^2}^2 \right) e^{C\sum\limits_{j=1}^{N} \|c_j(0)  - \gamma_j\|_{L^2}^4} e^{-cDt},
\ee for all $t \ge 0$. Integrating \eqref{specialcase4} in time from $0$ to $t$, we conclude that, for all $t \ge 0$
\be 
\int_{0}^{t} \|\na c_i(s)\|_{L^2}^2 ds
\le  C \left(\sum\limits_{j=1}^{N} \|c_j(0) - \gamma_j\|_{L^2}^2 \right) e^{C\sum\limits_{j=1}^{N} \|c_j(0)  - \gamma_j\|_{L^2}^4}.
\ee 
\end{proof}

\beg{rem} In view of Proposition \ref{prop9}, we deduce that the ionic concentrations decay in the spatial $L^2$ norm to their boundary values exponentially fast in time provided that the species have equal diffusivities and absolute valences. No smallness conditions are imposed neither on the initial and boundary data nor on the forcing terms $f$ and $g$. If the initial concentrations are spatially $L^p(\mathcal{O})$ regular for some even integer $p$, then we deduce that the decay holds in $L^p$ as well. This is obtained as a consequence of Theorem \ref{t4}. 
\end{rem}

\subsection{Regularity of the Invariant Measures}
In this subsection, we address the regularity of any invariant measure associated with the initial boundary value S-NPNS problem. We shall start by establishing logarithmic moment bounds for higher-order derivatives of the solution. 

\beg{prop} \label{boundedsmooth} Let $f \in H$ and $g \in \mathcal{D}(A^{\fr{1}{2}})$. Suppose $c_i(0) \in H^1(\mathcal{O})$ for all $i \in \left\{1, \dots, N\right\}$ and $u_0 \in \mathcal{D}(A^{\fr{1}{2}})$. {Furthermore, suppose that $g$ is sufficiently small in $L^2$.} Under the hypotheses of Theorem \ref{kbprocedure}, we have 
\be \label{H2moment}
\E \int_{0}^{T} \log \left(1 + \|\Delta c_i(t)\|_{L^2}^2 + \|Au(t)\|_{L^2}^2 \right) dt \le R_3(\|\na u_0\|_{L^2}, \|\na c_i(0)\|_{L^2}, \|\na g\|_{L^2}) + R_4(\|f\|_{L^2}, \|\na g\|_{L^2})T 
\ee for all times $T \ge 0$, where $R_3$ is a positive constant depending only on $\|\na u_0\|_{L^2}, \|\na c_i(0)\|_{L^2}, g$, the parameters of the problems, and some universal constants, and $R_4$ is a positive constant depending only on $f$, $g$, the parameters of the problem and some universal constants.
\end{prop}

\begin{proof} The gradient of the $i$-th concentration $c_i$ evolves in $L^2$ according to the deterministic energy equality
\be 
\beg{aligned}
\fr{d}{dt} \|\na c_i\|_{L^2}^2
+ 2D \|\Delta c_i\|_{L^2}^2
&= 2(u \cdot \na c_i, \Delta c_i)_{L^2} 
+ 2Dz_i(\na c_i \cdot \na \Phi), \Delta c_i)_{L^2} 
\\&\quad\quad+ 2D z_i ((c_i - \gamma_i)\Delta \Phi, \Delta c_i)_{L^2}
+ 2Dz_i\gamma_i (\Delta \Phi, \Delta c_i)_{L^2},
\end{aligned}
\ee and consequently, their sum obeys
\be 
\beg{aligned}
\fr{d}{dt} \sum\limits_{i=1}^{N} \|\na c_i\|_{L^2}^2
+ 2D \sum\limits_{i=1}^{N} \|\Delta c_i\|_{L^2}^2
&= 2\sum\limits_{i=1}^{N} (u \cdot \na c_i, \Delta c_i)_{L^2} 
+ 2D\sum\limits_{i=1}^{N} z_i(\na c_i \cdot \na \Phi), \Delta c_i)_{L^2} 
\\&\quad\quad+ 2D \sum\limits_{i=1}^{N} z_i ((c_i - \gamma_i)\Delta \Phi, \Delta c_i)_{L^2}
+ 2D\sum\limits_{i=1}^{N} z_i\gamma_i (\Delta \Phi, \Delta c_i)_{L^2}.
\end{aligned}
\ee In contrast, the stochastic evolution of the velocity in the spatial norm of $\mathcal{D}(A^{\fr{1}{2}})$ is described by 
\be 
\beg{aligned}
d \|A^{\fr{1}{2}} u\|_{L^2}^2
+ 2\|Au\|_{L^2}^2 dt
&= -2(B(u,u), Au)_{L^2} dt
- 2(\rho \na \Phi, Au)_{L^2} dt
\\&\quad\quad - 2(f, Au)_{L^2} dt
+ \|A^{\fr{1}{2}} g\|_{L^2}^2 dt
- 2(g, Au)_{L^2} dW.
\end{aligned}
\ee 
We consider the stochastic processes
\be 
X(t) = \log \left(1+ \|A^{\fr{1}{2}} u(t)\|_{L^2}^2 + \sum\limits_{i=1}^{N} \|\na c_i(t)\|_{L^2}^2 \right)  \quad \text{and} \quad 
Y(t) = \|Au(t)\|_{L^2}^2 + D\sum\limits_{i=1}^{N} \|\Delta c_i(t)\|_{L^2}^2.
\ee An application of It\^o's lemma gives 
\be \la{itodiff}
\beg{aligned}
d X
+ 2Ye^{-X} dt
&= 2e^{-X} \sum\limits_{i=1}^{N} (u \cdot \na c_i, \Delta c_i)_{L^2} dt
+ 2e^{-X} D\sum\limits_{i=1}^{N} z_i(\na c_i \cdot \na \Phi), \Delta c_i)_{L^2} dt
\\&\quad\quad+ 2e^{-X}D \sum\limits_{i=1}^{N} z_i ((c_i - \gamma_i)\Delta \Phi, \Delta c_i)_{L^2} dt
+ 2e^{-X}D\sum\limits_{i=1}^{N} z_i\gamma_i (\Delta \Phi, \Delta c_i)_{L^2} dt
\\&\quad\quad-2e^{-X} (B(u,u), Au)_{L^2} dt
- 2e^{-X} (\rho \na \Phi, Au)_{L^2} dt
- 2e^{-X}(f, Au)_{L^2} dt
\\&\quad\quad+ e^{-X}\|A^{\fr{1}{2}} g\|_{L^2}^2 dt
- 2e^{-2X}(g, Au)_{L^2}^2 dt
- 2e^{-X} (g, Au)_{L^2} dW.
\end{aligned}
\ee 
Now we estimate the nonlinear terms. In view of the divergence-free condition obeyed by the velocity, we integrate by parts, estimate using $L^4$ interpolation inequalities and the boundedness of $e^{-X}$ by $1$, and obtain 
\be 
\beg{aligned}
&\left|2e^{-X} \sum\limits_{i=1}^{N} (u \cdot \na c_i, \Delta c_i)_{L^2} \right|
\le Ce^{-X} \sum\limits_{i=1}^{N} \int_{\mathcal{O}} |\na u| |\na c_i|^2 dx
\\&\quad\quad\le Ce^{-X} \sum\limits_{i=1}^{N} \|\na u\|_{L^2} \|\na c_i\|_{L^4}^2
\le Ce^{-X}  \sum\limits_{i=1}^{N} \|\na u\|_{L^2} \|\na c_i\|_{L^2} \|\Delta c_i\|_{L^2} 
\\&\quad\quad\le \fr{1}{16} e^{-X}Y + Ce^{-X} \|\na u\|_{L^2}^2 \sum\limits_{i=1}^{N} \|\na c_i\|_{L^2}^2
\le \fr{1}{16} e^{-X}Y + C\|\na u\|_{L^2}^2.
\end{aligned}
\ee 
Elliptic estimates provide bounds on the $L^4$ norm of $\na \Phi$ and yield
\be 
\beg{aligned}
&\left|2e^{-X} D\sum\limits_{i=1}^{N} z_i(\na c_i \cdot \na \Phi), \Delta c_i)_{L^2}  \right| \\
&\quad\quad \le 2De^{-X} \sum\limits_{i=1}^{N} \|\na c_i\|_{L^4} \|\na \Phi\|_{L^4} \|\Delta c_i\|_{L^2}
\le CDe^{-X} \sum\limits_{i=1}^{N} \|\na \Phi\|_{L^2}^{\fr{1}{2}}\|\rho\|_{L^2}^{\fr{1}{2}} \|\na c_i\|_{L^2}^{\fr{1}{2}} \|\Delta c_i\|_{L^2}^{\fr{3}{2}}\\
&\quad \quad \le \fr{1}{16}e^{-X}Y + Ce^{-X} \|\na \Phi\|_{L^2}^2 \|\rho\|_{L^2}^2 \sum\limits_{i=1}^{N} \|\na c_i\|_{L^2}^2
\le \fr{1}{16} e^{-X}Y + C \|\na \Phi\|_{L^2}^2 \|\rho\|_{L^2}^2.
\end{aligned}
\ee 
By making use of the Poisson equation obeyed by $\Phi$, we have
\be 
\beg{aligned}
&\left|  2e^{-X}D \sum\limits_{i=1}^{N} z_i ((c_i - \gamma_i)\Delta \Phi, \Delta c_i)_{L^2}\right| \\
&\quad\quad\le CDe^{-X} \sum\limits_{i=1}^{N} |z_i| \|c_i - \gamma_i\|_{L^4} \|\rho\|_{L^4} \|\Delta c_i\|_{L^2} 
\le CDe^{-X} \sum\limits_{i=1}^{N} |z_i| \|c_i - \gamma_i\|_{L^2}^{\fr{1}{2}} \|\na c_i\|_{L^2}^{\fr{1}{2}} \|\rho\|_{L^2}^{\fr{1}{2}} \|\na \rho\|_{L^2}^{\fr{1}{2}} \|\Delta c_i\|_{L^2} \\
&\quad\quad\le \fr{1}{16}e^{-X}Y + Ce^{-X} \sum\limits_{i=1}^{N} \|c_i - \gamma_i\|_{L^2}^2 \|\na c_i\|_{L^2}^2
+ Ce^{-X} \|\rho\|_{L^2}^2 \|\na \rho\|_{L^2}^2 \le \fr{1}{16}e^{-X}Y + C\sum\limits_{i=1}^{N} \|c_i - \gamma_i\|_{L^2}^2 
+ C \|\rho\|_{L^2}^2
\end{aligned}
\ee and
\be 
\left|2e^{-X}D\sum\limits_{i=1}^{N} z_i\gamma_i (\Delta \Phi, \Delta c_i)_{L^2}\right|
\le \fr{1}{16}e^{-X}Y + C\|\rho\|_{L^2}^2.
\ee Due to the self-adjointness of the Leray projector, the Ladyzhenskaya's $L^4$ inequality, and the ellipticity of the Stokes operator, we bound
\be 
\beg{aligned}
&\left|2e^{-X} (B(u,u), Au)_{L^2}\right|
= 2e^{-X} (u \cdot \na u, Au)_{L^2}
\le Ce^{-X} \|u\|_{L^2}^{\fr{1}{2}} \|\na u\|_{L^2} \|\Delta u\|_{L^2}^{\fr{1}{2}} \|Au\|_{L^2}
\\&\quad\quad\le Ce^{-X} \|u\|_{L^2}^{\fr{1}{2}} \|A^{\fr{1}{2}}u\|_{L^2} \|Au\|_{L^2}^{\fr{3}{2}}
\le \fr{1}{16}e^{-X}Y + Ce^{-X} \|u\|_{L^2}^2\|\na u\|_{L^2}^4
\le \fr{1}{16}e^{-X}Y + C\|u\|_{L^2}^2\|\na u\|_{L^2}^2.
\end{aligned}
\ee Exploiting again the properties of $\PP$ and interpolating, we obtain
\be 
\beg{aligned}
&\left|2e^{-X} (\rho \na \Phi, Au)_{L^2} \right|
\le Ce^{-X} \|\rho\|_{L^4} \|\na \Phi\|_{L^4} \|Au\|_{L^2}
\le Ce^{-X} \|\rho\|_{L^2} \|\na \rho\|_{L^2}^{\fr{1}{2}} \|\na \Phi\|_{L^2}^{\fr{1}{2}} \|Au\|_{L^2}
\\&\quad\quad\le \fr{1}{16}e^{-X}Y + Ce^{-X}\|\na \Phi\|_{L^2}^2 \|\rho\|_{L^2}^2 
+ Ce^{-X} \|\rho\|_{L^2}^2 \|\na \rho\|_{L^2}^2
\le \fr{1}{16}e^{-X}Y + C\|\na \Phi\|_{L^2}^2 \|\rho\|_{L^2}^2 
+ C\|\rho\|_{L^2}^2.
\end{aligned}
\ee 
{The linear terms can be handled easily by using Cauchy-Schwarz and Young's inequality.}
Thus, the It\^o's differential equality \eqref{itodiff} gives rise to the energy inequality
\be 
\beg{aligned}
dX + Ye^{-X}dt
&\le C\sum\limits_{i=1}^{N} \|c_i - \gamma_i\|_{L^2}^2 dt + C\|\na u\|_{L^2}^2 dt+ C\|u\|_{L^2}^2 \|\na u\|_{L^2}^2 dt+ C\|\rho\|_{L^2}^2 dt+ C\|\na \Phi\|_{L^2}^2 \|\rho\|_{L^2}^2 dt
\\&\quad\quad\quad\quad+ C\|f\|_{L^2}^2 dt+ C\|\na g\|_{L^2}^2dt 
-2 e^{-X} (g, Au)_{L^2} dW.
\end{aligned}
\ee
Integrating in time from $0$ to $t$, applying $\E$, using the moment bounds obtained in Proposition \ref{prop7} and Remark \ref{quadraticremark}
we deduce that the time integral of $\E (Ye^{-X})$ grows at most linearly in time. Using, in addition, the concentration gradient estimates derived in Propositions \ref{prop8} and \ref{prop9}, we obtain \eqref{H2moment}. 
\end{proof}

\beg{Thm} \label{boundeddomainh2regularity} 
Let $f \in H$, and $g \in \mathcal{D}(A^{\fr{1}{2}})$ be sufficiently small. Let $\mu$ be an invariant measure for the S-NPNS problem \eqref{sys:SNPNS} with boundary conditions 
\be 
u|_{\pa \mathcal{O}} = 0, \quad \Phi|_{\pa \mathcal{O}} = \gamma, \quad (c_1,\dots,c_N)|_{\pa \mathcal{O}}  = (\gamma_1, \dots,  \gamma_N).
\ee  Under the hypotheses of Theorem \ref{kbprocedure}, it holds that
\be 
\int_{\tilde{\mathcal{H}}} \log (1 + \|\omega\|_{H^2}^2) d\mu(\omega) < \infty.
\ee In other words, the invariant measure $\mu$ is supported on the Sobolev space $H^2$. 
\end{Thm}

\begin{proof} The invariant probability measure $\mu$ satisfies 
\be \label{invariancereforumlation}
\int_{\tilde{\mathcal{H}}} \phi (\omega_0) d\mu(\omega_0) = \int_{\tilde{\mathcal{H}}} \int_{\tilde{\mathcal{H}}} \fr{1}{T} \int_{0}^{T} P_t (\omega_0, d\omega) \phi (\omega) dt d\mu(\omega_0),
\ee for any $T > 0$ and $\phi \in C_b(\tilde{\mathcal{H}})$. For an integer $n \ge 1$, we denote by $P_n$ and $\tilde{P_n}$ the projections onto the spaces spanned by the first $n$ eigenfunctions of the Stokes operator $A$ and the homogeneous Dirichlet Laplacian $-\Delta$, respectively. The operators $P_n$ and $\tilde{P_n}$ commute with  $A^{\fr{1}{2}}$ and $(-\Delta)^{\fr{1}{2}}$, respectively. 
For an integer $n \ge 1$, a real number $R>0$, and a vector $\omega = (u, c_1, \dots, c_N) \in \tilde{\mathcal{H}}$, we define 
\be 
\psi_{n,R} (\omega) = \log \left(1 + \sum\limits_{i=1}^{N} \|(-\Delta)^{\fr{1}{2}} \tilde{P_n} c_i \|_{L^2}^2 + \|A^{\fr{1}{2}} P_n u\|_{L^2}^2 \right) \wedge R,
\ee 
and note that $\psi_{n,R}$ is well-defined on $\tilde{\mathcal{H}}$ and obeys $\psi_{n,R} \in C_b(\tilde{\mathcal{H}})$. As the operators $P_n$ and $A^{\fr{1}{2}}$, and $\tilde{P}_n$ and $(-\Delta)^{\fr{1}{2}}$ commutes, and due to the boundedness of the projections $P_n$ and $\tilde{P}_n$ on $L^2$, we estimate, for any $T>0$,
\be 
\begin{aligned} 
&\left|\fr{1}{T} \int_{0}^{T} \int_{\tilde{\mathcal{H}}} P_t(\omega_0, d\omega) \psi_{n,R}(\omega) dt \right|
= \left|\fr{1}{T} \int_{0}^{T} \E \psi_{n,R}(\omega(t, \omega_0)) dt \right| 
\\&\le \fr{1}{T} \E \int_{0}^{T} \log \left(1 +  \sum\limits_{i=1}^{N} \|\na c_i \|_{L^2}^2 + \|A^{\fr{1}{2}}u\|_{L^2}^2 \right)
\le C_1(\|c_i(0)\|_{L^2}, \|u_0\|_{L^2})T^{-1} + C_2(\|f\|_{L^2}, \|g\|_{L^2})
\end{aligned}
\ee

Let $B_{\tilde{\mathcal{H}}}(\rho)$ be the ball
$B_{\tilde{\mathcal{H}}}(\rho) = \left\{\omega \in \tilde{\mathcal{H}}: \|\omega\|_{\tilde{\mathcal{H}}}^2 \le \rho^2 \right\}.$
In view of the invariance property \eqref{invariancereforumlation}, we have
\be
\begin{aligned}
&\int_{\tilde{\mathcal{H}}} \psi_{n,R} (\omega_0) d\mu(\omega_0)\\
&\quad\le \int_{B_{\tilde{\mathcal{H}}}(\rho)} \left|\fr{1}{T} \int_{0}^{T} \int_{\tilde{\mathcal{H}}} P_t(\omega_0, d\omega) \psi_{n,R}(\omega) dt \right| d\mu(\omega_0)+ \int_{\tilde{\mathcal{H}} \setminus B_{\tilde{\mathcal{H}}}(\rho)} \left|\fr{1}{T} \int_{0}^{T} \int_{\tilde{\mathcal{H}}} P_t(\omega_0, d\omega) \psi_{n,R}(\omega) dt \right| d\mu(\omega_0) 
\\&\quad\le (C_1(\rho) T^{-1}+ C_2(\|f\|_{L^2}, \|g\|_{L^2})) \mu(B_{\tilde{\mathcal{H}}}(\rho)) + R\mu(\tilde{\mathcal{H}} \setminus B_{\tilde{\mathcal{H}}}(\rho)).
\end{aligned}
\ee
We choose a sufficiently large radius $\rho$ so that 
\be 
R\mu(\tilde{\mathcal{H}} \setminus B_{\tilde{\mathcal{H}}}(\rho)) \le 1,
\ee and then we pick a sufficiently large time $T$ such that 
\be 
C_1(\rho) T^{-1} \le 1.
\ee These choices of $\rho$ and $T$ allows us to obtain the bound
\be 
\int_{\tilde{\mathcal{H}}} \psi_{n,R} (\omega_0) d\mu(\omega_0) \le  2 + C_2(\|f\|_{L^2}, \|g\|_{L^2}), 
\ee which yields 
\be 
\int_{\tilde{\mathcal{H}}}  \left(\log \left(1 + \sum\limits_{i=1}^{N} \|\na c_i(0)\|_{L^2}^2 + \|A^{\fr{1}{2}}u\|_{L^2}^2 \right) \wedge R \right) d\mu (\omega_0) \le  2 +  C_2(\|f\|_{L^2}, \|g\|_{L^2})
\ee after an application of Fatou's lemma. By the Monotone Convergence Theorem, it holds that 
\be 
\int_{\tilde{\mathcal{H}}} \log \left(1 + \sum\limits_{i=1}^{N} \|\na c_i(0)\|_{L^2}^2 + \|A^{\fr{1}{2}}u\|_{L^2}^2 \right) d\mu(\omega_0) \le  2 +  C_2(\|f\|_{L^2}, \|g\|_{L^2}),
\ee and thus, the invariant measure $\mu$ is supported on $H^1$. Now we upgrade the regularity of the invariant measure and fix a vector $\omega$ in $H^1$. We define 
\be 
\psi_{n,R}^2 (w) = \log \left(1 + \sum\limits_{i=1}^{N} \|\Delta P_n c_i \|_{L^2}^2 +  \|A P_n u \|_{L^2}^2 \right) \wedge R,
\ee make use of Proposition \ref{boundedsmooth}, and repeat the same argument as above to obtain 
\be 
\int_{H^1} \log \left(1 + \sum\limits_{i=1}^{N} \|\Delta c_i\|_{L^2}^2 +\|\Delta u\|_{L^2}^2 \right) d\mu(\omega_0) \le C_3(\|f\|_{L^2}, \|\na g\|_{L^2}).
\ee Therefore, the invariant measure $\mu$ is supported on $H^2$. 
\end{proof}

\subsection{Uniqueness of the Invariant Measure} 

In this subsection, we employ asymptotic coupling techniques to study the uniqueness of invariant measures for the S-NPNS system.
    \begin{Thm} \label{uniquemeasure}
    Fix N species with equal sufficiently large diffusivities $D_1 = \dots = D_N = D$ and equal absolute valences $|z_1| = \dots = |z_N|$. Let $\gamma$ be a given real number and $\gamma_1, \dots, \gamma_N$ be {
    small} positive real numbers satisfying condition \eqref{upgradedcond}. Let $f \in H$ and $g \in H$ be time-independent. There exists an integer $n:= n(f, g)$ depending only on the body forces $f$, the noise $g$, the parameters of the problem, and some universal constants such that if $P_n H \subset range(g)$, then there exists at most one ergodic invariant probability measure for the Markov semigroup \eqref{markovsemigroup} associated with the S-NPNS problem \eqref{sys:SNPNS} with boundary conditions 
\be \label{bcon}
u|_{\pa \mathcal{O}} = 0, \quad \Phi|_{\pa \mathcal{O}} = \gamma, \quad (c_1,\dots,c_N)|_{\pa \mathcal{O}}  = (\gamma_1, \dots,  \gamma_N).
\ee 
Here, the operator $P_n$ is the projection onto the space spanned by the first $n$ eigenfunctions of the Stokes operator $A$.
\end{Thm}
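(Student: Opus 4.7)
The plan is to implement the generalized asymptotic coupling framework of Hairer--Mattingly and its successors. For fixed initial data $\omega_0, \tilde\omega_0 \in \tilde{\mathcal H}$, I would construct a companion process $\tilde\omega = (\tilde u, \tilde c_1, \dots, \tilde c_N)$ driven by the same Brownian motion as the original solution $\omega = (u, c_1, \dots, c_N)$, but with a nudging term added to the velocity equation acting only on finitely many low Stokes modes:
\be
d \tilde u + (B(\tilde u, \tilde u) + A \tilde u + \mathcal{P}(\tilde \rho \na \tilde \Phi) - f) dt = \lambda P_n(u - \tilde u) dt + g dW,
\ee
together with the usual Nernst--Planck equations for $\tilde c_i$ driven by $\tilde u$ and $\tilde \Phi$, subject to the boundary conditions \eqref{bcon}. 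Since the hypothesis $P_n H \subset \mathrm{range}(g)$ holds, the feedback $\lambda P_n(u - \tilde u)$ can be written as $g h(t)$ for some adapted process $h(t)$; this is what will ultimately allow the use of Girsanov's theorem.

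Writing $v = u - \tilde u$ and $\eta_i = c_i - \tilde c_i$, these differences obey a deterministic coupled system. The $L^2$ evolution of $v$ takes the schematic form
\be
\fr{d}{dt} \|v\|_{L^2}^2 + 2\|\na v\|_{L^2}^2 + 2\lambda \|P_n v\|_{L^2}^2 \le \mathcal{N}(v, \eta_1, \dots, \eta_N),
\ee
while the concentration differences satisfy
\be
\fr{d}{dt} \|\eta_i\|_{L^2}^2 + 2D \|\na \eta_i\|_{L^2}^2 \le \mathcal{N}_i(v, \eta_1, \dots, \eta_N, \na \tilde c_i, \na \Phi, \na \tilde \Phi).
\ee
Three ingredients feed into closing these estimates: the uniform time-integrated $H^1$ bound on the concentrations from Proposition \ref{prop9}, specifically \eqref{gradientuniformcontrol}, which is available under Setting \ref{setting2}; the splitting of $v$ into low frequencies (dissipated at rate $\lambda$ via the nudging) and high frequencies (dissipated at rate $\mu_{n+1}$ via viscosity); and the elliptic identity $\Phi - \tilde \Phi = (-\Delta_D)^{-1} \sum_j z_j \eta_j$, which converts norms of $\na(\Phi - \tilde \Phi)$ into norms of the $\eta_j$'s. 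Choosing $n$ large enough depending on $f, g$ and the parameters, and exploiting the assumed largeness of the common diffusivity $D$ to absorb the nonlinear cross terms coming from transport and electromigration, one obtains an almost-sure exponential decay of $\|v(t)\|_{L^2}^2 + \sum_i \|\eta_i(t)\|_{L^2}^2$ as $t \to \infty$.

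This decay implies that $\int_0^\infty \|h(t)\|^2 dt < \infty$ almost surely, and hence Girsanov's theorem ensures that the law of $\tilde \omega$ on pathspace is absolutely continuous with respect to that of an honest solution of the S-NPNS system started from $\tilde\omega_0$. The construction therefore realises an asymptotic coupling in the sense of \cite{glatt2017unique, butkovsky2020generalized, hairer2011asymptotic}: two solutions from different initial data become asymptotically close on a pathspace event of controlled Radon--Nikodym weight. Combined with the Feller property of Theorem \ref{feller} and the existence of invariant measures of Theorem \ref{kbprocedure}, the standard consequence of the asymptotic coupling criterion is uniqueness of the ergodic invariant measure, which is the content of Theorem \ref{uniquemeasure}.

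The main obstacle will be closing the energy estimate for the concentration differences $\eta_i$. Because the Nernst--Planck equations carry no stochastic forcing and no direct feedback, synchronization of $c_i$ with $\tilde c_i$ must propagate entirely through the advective coupling $u \cdot \na c_i$ and through the nonlinear electromigration $z_i D \na\cdot(c_i \na \Phi)$, and both must be absorbed into the dissipation $D\|\na \eta_i\|_{L^2}^2$. This is precisely where the largeness of $D$ and the uniform $H^1$ control of the concentrations afforded by Setting \ref{setting2} are both indispensable, and where careful exploitation of the elliptic identity above together with Ladyzhenskaya-type interpolation inequalities applied to $c_i, \tilde c_i, \na \Phi, \na \tilde\Phi$ will be required.
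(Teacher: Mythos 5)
Your overall plan matches the paper's: construct a companion process with a low-mode nudging term in the velocity equation, derive a Gronwall inequality for the differences $(v,\eta_1,\dots,\eta_N)$, split the viscous dissipation of $v$ into low modes (killed by the $\lambda P_n$ feedback) and high modes (killed by $\mu_{n+1}$), use the deterministic uniform-in-time bound $\int_0^\infty \|\nabla c_i\|_{L^2}^2\,ds \le C$ from Proposition \ref{prop9}, and then invoke the asymptotic coupling criterion of \cite{glatt2017unique}. The elliptic identity $\Phi-\tilde\Phi = (-\Delta_D)^{-1}\sum_j z_j\eta_j$ and the Ladyzhenskaya interpolation you anticipate are exactly what the paper uses to absorb the electromigration nonlinearities.

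There is, however, a genuine gap in the step where you pass from the energy estimate to ``almost-sure exponential decay.'' After Gronwall you are left with a factor of the schematic form $\exp\left(-\min\{cD,\mu_{n+1}\}\,t + C\int_0^t\|\nabla u\|_{L^2}^2\,ds + \dots\right)$, and none of the three ingredients you list controls the stochastic term $\int_0^t\|\nabla u\|_{L^2}^2\,ds$. Largeness of $D$, largeness of $n$, and Proposition \ref{prop9} bound the deterministic and linear-in-$t$ contributions, but the random fluctuation of $\int_0^t\|\nabla u\|_{L^2}^2\,ds$ around its linear envelope is what threatens to overwhelm the dissipation, and neither an Itô bound on $\mathbb{E}\int_0^t\|\nabla u\|_{L^2}^2\,ds$ nor any deterministic estimate suffices, since the exponential is not Lipschitz. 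The missing ingredient is an exponential martingale estimate of the form \eqref{expmartin}: $\mathbb{P}\bigl(\sup_{t\ge 0}\{\tfrac12\int_0^t\|\nabla u\|^2\,ds - \|u_0\|^2-\|\nabla\Phi_0\|^2 - (\|g\|^2+C\|f\|^2)t\} > R\bigr) \le e^{-R/(8\|A^{-1/2}g\|^2)}$, which the paper establishes as Step 1 of its proof and which gives the pathwise envelope control that your argument tacitly assumes. Relatedly, the paper inserts the cut-off $\mathbf{1}_{\tau_k>t}$ into the feedback term, where $\tau_k$ is the first time $\int_0^t\|P_n(u-\tilde u)\|_{L^2}^2\,ds$ reaches $k$; this guarantees the companion process is an honest solution after $\tau_k$, so the Novikov condition for the Girsanov density is automatic (the shift is bounded in $L^2_t$ by $k$). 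Your un-truncated construction can be made rigorous, but only via a Girsanov localization argument, and that argument again requires exactly the a.s.\ integrability of the shift that rests on the same exponential martingale bound. You should also note that the smallness of the boundary data $\gamma_i$ interacts with the largeness of $D$ through the term $C(1+D)\gamma_i^2$ in the Gronwall exponent (see \eqref{crucialcondition}); largeness of $D$ alone does not absorb this, since it grows with $D$.
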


\begin{proof} 

Fix $(u_0, c_1(0), \dots, c_N(0))$ and $(\tilde{u}_0, \tilde{c}_1(0), \dots, \tilde{c}_N(0))$ in $\mathcal{H}$, where $\mathcal{H}$ is the space defined by \eqref{Hspace}. We denote by $(u(t), c_1(t), \dots, c_N(t))$ the solution to the S-NPNS problem \eqref{sys:SNPNS} with boundary conditions \eqref{bcon} and initial data $(u_0, c_1(0), \dots, c_N(0))$, and  by $(\tilde{u}(t), \tilde{c}_1(t), \dots, \tilde{c}_N(t))$ the solution to the system
\begin{subequations}
    \begin{align}
&d\tilde{u} + \tilde u \cdot \na \tilde u dt - \Delta \tilde u dt+ \na \tilde p dt = - \tilde \rho \na \tilde \Phi dt + f dt + {\bf{1}}_{\tau_k > t} \lambda P_n (u-\tilde u)dt + gdW,
\\ 
&\pa_t \tilde{c}_i + \tilde{u} \cdot \na \tilde{c}_i - D \Delta \tilde c_i = Dz_i \na \cdot (\tilde{c}_i \cdot \na \tilde \Phi),
\\
&-\Delta \tilde{\Phi} = \tilde \rho = \sum\limits_{i=1}^{N} z_i \tilde{c}_i,
\\ 
&\na \cdot \tilde{u} = 0,
    \end{align}
\end{subequations} 
with boundary conditions
\[
\tilde{u}|_{\pa \mathcal{O}} = 0, \quad \tilde{\Phi}|_{\pa \mathcal O} = \gamma, \quad (\tilde{c}_1, \dots, \tilde{c}_N)|_{\pa \mathcal{O}} = (\gamma_1, \dots, \gamma_N)
\]
and initial data $(\tilde{u}_0, \tilde{c}_1(0), \dots, \tilde{c}_N(0))$. Define a stopping time  $\tau_k$ by 
\be 
\tau_k = \inf\limits_{t \ge 0} \left\{\int_{0}^{t} \|P_n (u- \tilde{u})\|_{L^2}^2 ds \ge k \right\},
\ee
and $k$, $n$ and $\lambda$ are constants to be determined in such a way that the set $\left\{\tau_k = \infty \right\}$ has a nonvanishing probability and the differences $\|u - \tilde{u}\|_{L^2}$ and $\|c_i - \tilde{c}_i\|_{L^2}$ converge to $0$ as $t\to\infty$ on $\left\{\tau_k = \infty \right\}$ for all $i \in \left\{1, \dots, N \right\}$. This construction implies the uniqueness of invariant measures for \eqref{sys:SNPNS} with boundary data \eqref{bcon} based on the asymptotic coupling technique of \cite{glatt2017unique}.

The proof is divided into two main steps.

\smallskip
\noindent{\bf{Step 1. }} Fix an $R>0$. There exists a positive universal constant $C$ such that the set
\be 
A_R = \left\{\omega \in \Omega: \sup\limits_{t \ge 0} \left\{\fr{1}{2}\int_{0}^{t} \|\na u(s)\|_{L^2}^2 ds - \|u_0\|_{L^2}^2 - \|\na \Phi_0\|_{L^2}^2 -  \|g\|_{L^2}^2 t - C\|f\|_{L^2}^2t \right\} > R \right\}
\ee obeys 
\be \label{expmartin}
\PP(A_R) \le \exp \left\{-\fr{R}{8\|A^{-\fr{1}{2}}g\|_{L^2}^2} \right\}.
\ee Indeed, integrating the differential inequality \eqref{expm1} in time from $0$ to $t$ and taking the supremum over all positive times give rise to the inequality
\be 
\beg{aligned}
&\sup\limits_{t \ge 0} \left\{\fr{1}{2}\int_{0}^{t} \|A^{\fr{1}{2}} u(s)\|_{L^2}^2 ds - \|u_0\|_{L^2}^2 - \|\na \Phi_0\|_{L^2}^2 - \|g\|_{L^2}^2 t - C\|f\|_{L^2}^2t \right\}
\\&\quad\quad \le \sup\limits_{t \ge 0} \left\{\int_{0}^{t} 2(A^{-\fr{1}{2}}g, A^{\fr{1}{2}}u)_{L^2} dW - \fr{1}{8\|A^{-\fr{1}{2}}g\|_{L^2}^2} \int_{0}^{t} 4\|A^{-\fr{1}{2}}g\|_{L^2}^2 \|A^{\fr{1}{2}} u\|_{L^2}^2 ds \right\}.
\end{aligned}
\ee 
Consequently, \eqref{expmartin} follows from exponential martingale estimates. 

\smallskip
\noindent{\bf{Step 2.}} We define the following differences
\be 
U = u - \tilde{u}, \quad P = p - \tilde{p}, \quad R = \rho - \tilde{\rho}, \quad \Psi = \Phi - \tilde{\Phi}, \quad C_1 = c_1 - \tilde{c}_1, \;\dots,\; C_N = c_N - \tilde{c}_N,
\ee 
which evolve according to the deterministic system of equations
\noeqref{phie}
\begin{subequations}\label{sys:difference}
   \begin{align} 
&\pa_t U - \Delta U + \na P + {\bf{1}}_{\tau_k > t} \lambda P_n U = - U \cdot \na u - \tilde{u} \cdot \na U - R \na \Phi - \tilde{\rho} \cdot \na \Psi, \label{Ue}
\\
&\pa_t C_i - D \Delta C_i = -U \cdot \na c_i - \tilde{u} \cdot \na C_i + Dz_i \na \cdot (C_i \na \Phi) + Dz_i \na \cdot (\tilde{c}_i \na \Psi),\label{Cie} 
\\
& -\Delta \Psi = R = \sum\limits_{i=1}^{N} z_i C_i, \quad \na \cdot U = 0, \label{phie}
   \end{align}
\end{subequations}
with boundary conditions
\be 
U|_{\pa \mathcal{O}} = 0, \Psi|_{\pa \mathcal{O}} = 0, (C_1, \dots, C_n)|_{\pa \mathcal{O}} = (0, \dots, 0). 
\ee 
We take the scalar products in $L^2$ of the $U$-equation \eqref{Ue} with $U$ and the $C_i$-equations \eqref{Cie} with $C_i$ and add them. This results in the following energy equality
\be \label{asymptoticcoupling1}
\beg{aligned}
&\fr{1}{2} \fr{d}{dt} \left(\|U\|_{L^2}^2 + \sum\limits_{i=1}^{N} \|C_i\|_{L^2}^2 \right)
+ \|\na U\|_{L^2}^2 + D\sum\limits_{i=1}^{N} \|\na C_i\|_{L^2}^2 + {\bf{1}}_{\tau_k > t} \lambda (P_nU, U)_{L^2} 
\\&\quad\quad= - (U \cdot \na u, U)_{L^2} - (R \na \Phi, U)_{L^2} - (\tilde{\rho} \cdot \na \Psi, U)_{L^2} 
\\&\quad\quad\quad\quad- \sum\limits_{i=1}^{N} (U \cdot \na c_i, C_i)_{L^2} 
- \sum\limits_{i=1}^{N} Dz_i (\na \cdot (C_i \na \Phi), C_i)_{L^2}
- \sum\limits_{i=1}^{N} Dz_i (\na \cdot (\tilde{c}_i \na \Psi), C_i)_{L^2}.
\end{aligned}
\ee 
We estimate the nonlinearities term by term. An application of the Ladyzhenskaya's interpolation inequality gives
\be 
\left|(U \cdot \na u, U)_{L^2}\right|
\le C\|\na u\|_{L^2} \|U\|_{L^2} \|\na U\|_{L^2}.
\ee 
Due to the elliptic regularity obeyed by $\Phi$ and $\Psi$ and the Poincar\'e inequality applied to the boundary vanishing scalar function $\rho$ and $\tilde{\rho}$, we have
\be 
\left|(R \na \Phi, U)_{L^2} \right|
\le C\|R\|_{L^2} \|\na \Phi\|_{L^{\infty}}
\|U\|_{L^2}
\le C\|\na {\rho}\|_{L^2} \left(\sum\limits_{i=1}^{N} \|C_i\|_{L^2} \right) \|U\|_{L^2}
\ee and
\be 
\left|(\tilde{\rho} \cdot \na \Psi, U)_{L^2} \right|
\le C\|\tilde{\rho}\|_{L^4} \|\na \Psi\|_{L^4} \|U\|_{L^2}
\le C\|\na \tilde{\rho}\|_{L^2} \left(\sum\limits_{i=1}^{N} \|C_i\|_{L^2} \right) \|U\|_{L^2}.
\ee 
Simultaneous interpolations in $L^4$ implemented on the boundary vanishing differences $U$ and $C_i$ yield
\be 
\left|\sum\limits_{i=1}^{N} (U \cdot \na c_i, C_i)_{L^2}  \right|
\le C\sum\limits_{i=1}^{N} \|U\|_{L^4} \|\na c_i\|_{L^2} \|C_i\|_{L^4}
\le C\sum\limits_{i=1}^{N}  \|U\|_{L^2}^{\fr{1}{2}}  \|\na U\|_{L^2}^{\fr{1}{2}} \|\na c_i\|_{L^2} \|C_i\|_{L^2}^{\fr{1}{2}}\|\na C_i\|_{L^2}^{\fr{1}{2}}.
\ee 
Standard elliptic estimates and the Poincar\'e inequality bring that
\be 
\left|\sum\limits_{i=1}^{N} Dz_i (\na \cdot (C_i \na \Phi), C_i)_{L^2} \right| 
\le D\sum\limits_{i=1}^{N} |z_i| \|C_i\|_{L^2} \|\na \Phi\|_{L^{\infty}} \|\na C_i\|_{L^2} 
\le CD\sum\limits_{i=1}^{N}  \|C_i\|_{L^2} \|\na \rho\|_{L^{2}} \|\na C_i\|_{L^2}
\ee  and {
\be \label{asymptoticcoupling2}
\beg{aligned}
&\left|\sum\limits_{i=1}^{N} Dz_i (\na \cdot (\tilde{c}_i \na \Psi), C_i)_{L^2} \right|
\le D \sum\limits_{i=1}^{N} |z_i| \|\tilde{c}_i\|_{L^4} \|\na \Psi\|_{L^4} \|\na C_i\|_{L^2}
\\&\quad\quad\le CD \left(\sum\limits_{i=1}^{N} \left[\|\tilde c_i\|_{L^2} + \|\na \tilde{c}_i\|_{L^2} \right]  \|\na C_i\|_{L^2}\right) \left(\sum\limits_{i=1}^{N} \|C_i \|_{L^2} \right).
\end{aligned}
\ee }
Putting \eqref{asymptoticcoupling1}--\eqref{asymptoticcoupling2} and applying Young's inequality imply that
\be 
\beg{aligned}
&\fr{d}{dt} \left(\|U\|_{L^2}^2 + \sum\limits_{i=1}^{N} \|C_i\|_{L^2}^2 \right)
+ \|A^{\fr{1}{2}} U\|_{L^2}^2 + D\sum\limits_{i=1}^{N} \|\na C_i\|_{L^2}^2 + {\bf{1}}_{\tau_k > t} \lambda \|P_n U\|_{L^2}^2
\\&\quad\quad\le C\left(\|\na u\|_{L^2}^2 + (1+D)\sum\limits_{i=1}^{N} \left(\|\na c_i\|_{L^2}^2  + \|\na \tilde{c}_i\|_{L^2}^2  + {\|\tilde c_i\|_{L^2}^2}\right) \right) \left(\|U\|_{L^2}^2 + \sum\limits_{i=1}^{N} \|C_i\|_{L^2}^2 \right).
\end{aligned}
\ee 
Now we make use of the generalized Poincar\'e inequality
\be 
\|Q_n U\|_{L^2}^2 \le \mu_{n+1}^{-1} \|A^{\fr{1}{2}}U\|_{L^2}^2,
\ee 
where $Q_n$ is the orthogonal projection of $H$ onto the space spanned by the first $n$ eigenfunctions of the Stokes operator $A$ and $\mu_{n+1}$ is the $(n+1)$-th eigenvalue of $A$. We deduce that 
\be 
\beg{aligned}
&\|A^{\fr{1}{2}} U\|_{L^2}^2 + 1_{\tau_k > t} \lambda \|P_n U\|_{L^2}^2
\ge \mu_{n+1} \|Q_n U\|_{L^2}^2 + 1_{\tau_k > t} \lambda \|P_n U\|_{L^2}^2
\\&\quad\quad
\ge 1_{\tau_k > t} \mu_{n+1} \left(\|Q_n U\|_{L^2}^2 + \|P_n U\|_{L^2}^2\right) 
= 1_{\tau_k > t} \mu_{n+1}\|U\|_{L^2}^2,
\end{aligned}
\ee 
provided that $\lambda \ge \mu_{n+1}$. Consequently, on the time interval $[0, \tau_k]$, it holds that 
\be 
\beg{aligned}
&\fr{d}{dt} \left(\|U\|_{L^2}^2 + \sum\limits_{i=1}^{N} \|C_i\|_{L^2}^2 \right)
+\min\left\{cD, \mu_{n+1} \right\} \left(\|U\|_{L^2}^2 + \sum\limits_{i=1}^{N} \|C_i\|_{L^2}^2 \right)
\\&\quad\quad\le C\left(\|\na u\|_{L^2}^2 + (1+D)\sum\limits_{i=1}^{N} \left(\|\na c_i\|_{L^2}^2  + \|\na \tilde{c}_i\|_{L^2}^2  + {\gamma_i^2}\right)  \right) \left(\|U\|_{L^2}^2 + \sum\limits_{i=1}^{N} \|C_i\|_{L^2}^2 \right),
\end{aligned}
\ee 
where $c$ is the Poincar\'e constant. Then Gronwall's inequality produces
\be \label{asycoupling}
\beg{aligned}
\|U(t)\|_{L^2}^2 + \sum\limits_{i=1}^{N} \|C_i(t)\|_{L^2}^2
&\le \left(\|U(0)\|_{L^2}^2 + \sum\limits_{i=1}^{N} \|C_i(0)\|_{L^2}^2 \right) \exp\Big({- \min \left\{cD, \mu_{n+1} \right\}t}\Big) 
\\
&\hspace{1cm}\times\exp\left({C\int_{0}^{t} \left(\|\na u\|_{L^2}^2 + (1+D)\sum\limits_{i=1}^{N} \left(\|\na c_i\|_{L^2}^2  + \|\na \tilde{c}_i\|_{L^2}^2 + {\gamma_i^2}  \right)  \right)  ds}\right),
\end{aligned}
\ee 
on $[0, \tau_k]$. In view of \eqref{gradientuniformcontrol} obeyed by $\na c_i$, and that $\na \tilde c_i$ satisfies a similar estimate, the bound \eqref{asycoupling} reduces to 
\be 
\|U(t)\|_{L^2}^2 + \sum\limits_{i=1}^{N} \|C_i(t)\|_{L^2}^2
\le C_0 \exp\left({- \min \left\{cD, \mu_{n+1} \right\}t} \right) \exp\left({{C(1+D)\gamma_i^2 t } + C\int_{0}^{t} \|\na u(s)\|_{L^2}^2 dt}\right), 
\ee 
on $[0, \tau_k]$ for some constant $C_0$ depending on the $L^2$ norms of $c_i(0), \tilde{c}_i(0), u_0, \tilde{u}_0$, the boundary data, the parameters of the problem, and some universal constants. By \textbf{Step 1}, the complement $A_R^c$ of the set $A_R$ obeys
$\PP (A_R^c) > 0$,
for any $R>0$. Moreover, we have 
\be 
\begin{aligned}
    &\|U(t)\|_{L^2}^2 + \sum\limits_{i=1}^{N} \|C_i(t)\|_{L^2}^2
\\
\le &C_0  \exp\left({2C\|u_0\|_{L^2}^2 + 2C\|\na \Phi_0\|_{L^2}^2} \right)  \exp\left({- \min \left\{cD, \mu_{n+1} \right\}t + C\|g\|_{L^2}^2 t + C\|f\|_{L^2}^2t +  {C(1+D)\gamma_i^2 t}}\right)
\end{aligned}
\ee 
for any $t \in [0, \tau_k]$ and $\omega \in A_R^c$. If, in addition, the relation 
\be \label{crucialcondition}
C\|g\|_{L^2}^2 + C\|f\|_{L^2}^2 + {  C(1+D)\gamma_i^2} \le
\fr{\min \left\{ cD, \mu_{n+1}\right\}}{2}
\ee 
holds, then we obtain 
\be 
\|U(t)\|_{L^2}^2 + \sum\limits_{i=1}^{N} \|C_i(t)\|_{L^2}^2
\le C_0e^{2C\|u_0\|_{L^2}^2 + 2C\|\na \Phi_0\|_{L^2}^2}   e^{- \fr{1}{2}\min \left\{cD, \mu_{n+1} \right\}t } 
\ee 
on $A_R^c$ for any $t \in [0, \tau_k].$ Therefore, we choose $n$ and the diffusivity $D$ to be sufficiently large and $k$ so that $A_R^c \subset \left\{\tau_k =\infty \right\}$ and conclude that the $L^2$ norm of both $U$ and $C_i$ converges to 0 in time on the nontrivial set $A_R^c$, completing the proof of Theorem \ref{uniquemeasure}. 
  \end{proof}

\section{The Periodic Case} \label{per}

In this section, we address the ergodicity of the S-NPNS system \eqref{sys:SNPNS} on the periodic box $\TT^2 = [0, 2\pi]^2$ with periodic boundary conditions. It is evident that the condition 
\be 
\sum\limits_{i=1}^{N} \frac{z_i}{|\TT^2|} \int_{\TT^2} c_i(x,t) dx = 0
\ee holds for all positive times, a fact that follows from integrating the Poisson equation obeyed by $\Phi$ spatially over $\TT^2$. 

We denote by $H_{per}$ the space of periodic, mean-free, and divergence-free two-dimensional vector fields, by $L_{per}^2$ the space of periodic $L^2(\TT^2)$ integrable functions, and by $H_{per}^s$ the space of periodic $H^s(\TT^2)$ Sobolev functions. In the sequel, we write $H$ instead of $H_{per}$ for simplicity. 

We consider the product space 
\be 
\mathcal{H}_{per} := H \times L^2 \times \dots L^2
\ee  
of vectors $\omega = (v, \xi_1, \dots, \xi_N)$ equipped with the norm 
$\|\omega\|_{\mathcal{H}_{per}}^2 = \|v\|_{L^2}^2 + \sum\limits_{i=1}^{N} \|\xi_i - \bar{\xi}_i\|_{L^2}^2,$
where $\bar{\xi}_i$ denotes the average of $\xi$ over $\TT^2$. For a given vector $K = (K_1, \dots, K_N)$ of {nonnegative} real number $K_1, \dots, K_N$, we consider the space $\tilde{\mathcal{H}}_{per}$ consisting of vectors $(v, \xi_1, \dots, \xi_N) \in \mathcal{H}_{per}$ such that the scalar functions $\xi_1, \dots, \xi_N$ are nonnegative a.e. and satisfy 
\be  \label{torusmaincondition}
\int_{\TT^2} \xi_i(x) dx = K_i, \quad \text{for all} \; i \in \left\{1, \dots, N \right\}.
\ee

For a positive time $t \ge 0$ and a Borel set $A \in \mathcal{B}(\tilde{\mathcal{H}}_{per})$, we define the Markov transition kernels associated with the S-NPNS system \eqref{sys:SNPNS} with periodic boundary conditions by 
\be 
P_t^{per}(\omega_0, A) := \PP (\omega (t, \omega_0) \in A),
\ee
where $\omega(t, \omega_0)$ denotes the solution $\omega = (u, c_1, \dots, c_N)$ to the problem \eqref{sys:SNPNS} with periodic boundary conditions and initial datum $\omega_0 = (u_0, c_1(0), \dots, c_N(0))$.
Let $\mathcal{M}_b(\tilde{\mathcal{H}}_{per})$ be the collection of bounded real-valued Borel measurable functions on $\tilde{\mathcal{H}}_{per}$. For each $t \ge 0$ and $\phi \in \mathcal{M}_b(\tilde{\mathcal{H}}_{per})$, we define the Markovian semigroup, denoted by $\tilde P_t^{per}$, by 
\be \label{markovperiodic}
\tilde P_t^{per} \phi (\omega_0) := \mathbb{E}\phi(\omega(t, \omega_0)) = \int_{\mathcal{H}} \phi (\omega) P_t^{per} (\omega_0, d\omega).
\ee 

Let $C_b(\tilde{\mathcal{H}}_{per})$ be the space of continuous bounded real-valued functions on $\tilde{\mathcal{H}}_{per}$. As shown in Theorem \ref{feller}, the semigroup $\left\{\tilde P_t^{per}\right\}_{t \ge 0}$ is Markov Feller on $C_b(\tilde{\mathcal{H}}_{per})$. 

The main goal of this section is to prove the existence of smooth invariant measures for $\left\{\tilde P_t^{per}\right\}_{t \ge 0}$ for an arbitrary number of ionic species with different diffusivities and valences, provided that $K$ is sufficiently small. No size restrictions are imposed on the noise $g$ nor on the body forces $f$, which improve the results obtained on bounded domains. 
{
The following theorem is the main result of this section.

\beg{Thm} \label{maintheoremonthetorus} 
Fix $N$ ionic species with diffusivities $D_1, \dots, D_N$ and valences $z_1, \dots, z_N$. Let $m \ge 0$ be a nonnegative integer. Suppose $f \in H^m$ and $g \in H^m$ are time-independent, divergence-free, and mean-free. If the constant $K$ defined in \eqref{torusmaincondition} is sufficiently small, then there exists an ergodic invariant probability measure $\mu^{per}$ for the Markov semigroup \eqref{markovperiodic} associated with the periodic S-NPNS problem, that is,
\be 
\int_{\tilde{\mathcal{H}}_{per}} \phi(\omega) d\mu^{per}(\omega) 
= \int_{\tilde{\mathcal{H}}_{per}} \tilde{P}_t^{per} \phi(\omega) d\mu^{per}(\omega)
\ee for any $\phi \in C_b(\tilde{\mathcal{H}}_{per})$. Moreover, the invariant measure $\mu^{per}$ is $H^m$ regular, that is, 
\be 
\int_{\tilde{\mathcal{H}}_{per}} \log \left(1 + \|\omega\|_{H^m}^2 \right) d\mu^{per}(\omega) < \infty.
\ee {If the minimum value of the diffusivities is sufficiently large, then there exists an integer $n:= n(f, g)$ depending only on the body forces $f$, the noise $g$, the parameters of the problem, and some universal constants such that if $P_n H \subset range(g)$, then the invariant measure is unique.}
\end{Thm}

Theorem \ref{maintheoremonthetorus} is a consequence of Propositions \ref{torusmoment18} and \ref{torussmoothness1} below.
} 

\beg{prop} \label{torusmoment18} Let $u_0 \in H$ and $c_i(0) \in L^2$ be nonnegative for all $i \in \left\{1, \dots, N \right\}$. Assume $f \in H$ and $g \in H$ are time-independent. Suppose the initial spatial averages of the ionic concentrations are sufficiently small. {
Then the quadratic moment bound 
\be \label{torusmoment16}
\E\int_{0}^{t} \sum\limits_{i=1}^{N} D_i \|\na c_i(s)\|_{L^2}^2  ds  
\le \mathcal{J}_0
\ee holds for all $t \ge 0$, where $\mathcal{J}_0$ is a nonnegative constant depending on the valences, diffusivities, the number of ionic species, the $L^2$ norm of the initial velocity and concentrations, the body forces $f$, the noise $g$, and some universal constants such that $\mathcal{J}_0 = 0$ when $c_i(0) = \bar{c}_i$ and $u_0 = 0$.}
\end{prop}

\begin{proof} The proof is divided into four main steps. 

\smallskip
\noindent{\bf{Step 1. Charge density $L^2$ quadratic moment bounds.}} We fix a species index $i \in \left\{1, \dots, N\right\}$ and take the $L^2(\TT^2)$ inner product of the equation obeyed by the ionic concentrations $c_i$ with $\log \left(\fr{c_i}{\bar{c}_i}\right)$. In view of the identity
\be 
\int_{\TT^2} (\pa_t c_i) \log \left(\fr{c_i}{\bar{c}_i}\right) dx
= \int_{\TT^2} \pa_t \left(c_i \log \fr{c_i}{\bar{c}_i} - c_i + \bar{c}_i\right) dx
= \fr{d}{dt} \int_{\TT^2} \left(c_i \log \fr{c_i}{\bar{c}_i} - c_i + \bar{c}_i\right) dx,
\ee and the nonlinearity cancellation
\be 
\int_{\TT^2} (u \cdot \na c_i) \log \frac{c_i}{\bar{c}_i} dx
= - \int_{\TT^2} (u \cdot \na 
\log c_i) c_i dx
= - \int_{\TT^2} u \cdot \na c_i dx
= 0,
\ee we obtain the energy equality
\be 
\fr{d}{dt} E_i 
= D_i \int_{\TT^2} \na \cdot \left(\na c_i + z_ic_i \na \Phi \right) \log \frac{c_i}{\bar{c}_i} dx
= - D_i \int_{\TT^2} \left(\na c_i + z_ic_i \na \Phi \right) \cdot \fr{\na c_i}{c_i} dx ,
\ee where $E_i(t)$ is the energy defined by
\be 
E_i(t)= \int_{\TT^2} \left(c_i(t) \log \fr{c_i(t)}{\bar{c}_i} - c_i(t) + \bar{c}_i\right) dx,
\ee at time $t \ge 0$. Obviously, $E_i(0) = 0$ when $c_i(0) = \bar{c}_i$. We rewrite the forcing migration term as 
\be 
\beg{aligned}
&- D_i \int_{\TT^2} \left(\na c_i + z_ic_i \na \Phi \right) \cdot \fr{\na c_i}{c_i} dx 
\\&= - D_i \int_{\TT^2} \fr{1}{c_i} \left(\na c_i + z_ic_i \na \Phi \right) \cdot \left(\na c_i + z_ic_i  \na \Phi \right) dx
+ D_i \int_{\TT^2} \fr{1}{c_i} \left(\na c_i + z_ic_i \na \Phi \right) \cdot \left( z_ic_i  \na \Phi \right) dx
\\&= - D_i \left\|\fr{\na c_i + z_ic_i \na \Phi}{\sqrt{c_i}} \right\|_{L^2}^2
+ D_i z_i\int_{\TT^2} \left(\na c_i + z_ic_i \na \Phi \right) \cdot {\na \Phi}  dx,
\end{aligned}
\ee and infer that
\be \label{torusmoment1}
\fr{d}{dt}E_i
+ D_i \left\|\fr{\na c_i + z_ic_i \na \Phi}{\sqrt{c_i}} \right\|_{L^2}^2
= D_i z_i\int_{\TT^2} \left(\na c_i + z_ic_i \na \Phi \right) \cdot {\na \Phi}  dx.
\ee
Seeking a cancellation of the term on the right-hand side of \eqref{torusmoment1}, we observe that 
\begin{align} 
& \sum\limits_{i=1}^{N} - D_i z_i\int_{\TT^2} \left(\na c_i + z_ic_i \na \Phi \right) \cdot {\na \Phi}  dx
= \sum\limits_{i=1}^{N} D_i z_i \int_{\TT^2} \na \cdot (\na c_i + z_i c_i \na \Phi) \Phi dx
= \sum\limits_{i=1}^{N} z_i \int_{\TT^2} \left(\pa_t c_i + u \cdot \na c_i\right) \Phi dx
\\&=\int_{\TT^2} \Phi \pa_t \rho  dx 
+ \int_{\TT^2} u \cdot \na \rho \Phi dx
= -\int_{\TT^2} \Phi \pa_t \Delta \Phi dx
- \int_{\TT^2} u \cdot \na \Phi  \rho dx
= \fr{1}{2}  \fr{d}{dt} \|\na \Phi\|_{L^2}^2
- \int_{\TT^2} u \cdot \na \Phi \rho dx.\label{torusmoment2}
\end{align}
Adding \eqref{torusmoment1} and \eqref{torusmoment2} gives
\be 
\fr{d}{dt} \left(\sum\limits_{i=1}^{N} E_i + \fr{1}{2} \|\na \Phi\|_{L^2}^2 \right)
+ \sum\limits_{i=1}^{N} D_i \left\|\fr{\na c_i + z_ic_i \na \Phi}{\sqrt{c_i}} \right\|_{L^2}^2 = \int_{\TT^2} u \cdot \na \Phi \rho dx.
\ee
Coupled with the stochastic $L^2$ evolution of the velocity $u$ described by 
\be 
\fr{1}{2} d \|u\|_{L^2}^2 
+ \|\na u\|_{L^2}^2 dt
= -(\rho \na \Phi, u)_{L^2} dt
+ (f,u)_{L^2} dt
+ \fr{1}{2} \|g\|_{L^2}^2 dt
+ (g,u)_{L^2} dW,
\ee we obtain the stochastic evolution equation
\be \label{torusmoment3}
\beg{aligned}
& d\left(\fr{1}{2} \|u\|_{L^2}^2 + \sum\limits_{i=1}^{N} E_i + \fr{1}{2} \|\na \Phi\|_{L^2}^2 \right)
+ \left( \|\na u\|_{L^2}^2  + \sum\limits_{i=1}^{N} D_i \left\|\fr{\na c_i + z_ic_i \na \Phi}{\sqrt{c_i}} \right\|_{L^2}^2 \right) dt
\\&\quad\quad\quad\quad= (f,u)_{L^2} dt
+ \fr{1}{2} \|g\|_{L^2}^2 dt
+ (g,u)_{L^2} dW.
\end{aligned}
\ee In view of the estimate
\be 
|(f,u)_{L^2}| \le \fr{1}{2} \|\na u\|_{L^2}^2 + C\|f\|_{L^2}^2,
\ee the equality \eqref{torusmoment3} gives rise to the differential inequality
\be \label{torusmoment4}
\beg{aligned}
& d\left(\fr{1}{2} \|u\|_{L^2}^2 + \sum\limits_{i=1}^{N} E_i + \fr{1}{2} \|\na \Phi\|_{L^2}^2 \right)
+ \left( \fr{1}{2} \|\na u\|_{L^2}^2  + \sum\limits_{i=1}^{N} D_i \left\|\fr{\na c_i + z_ic_i \na \Phi}{\sqrt{c_i}} \right\|_{L^2}^2 \right) dt
\\&\quad\quad\quad\quad
\le C\|f\|_{L^2}^2 dt
+ \fr{1}{2} \|g\|_{L^2}^2 dt
+ (g,u)_{L^2} dW,
\end{aligned}
\ee which yields the moment bound
\be 
\E \int_{0}^{t} \left( \fr{1}{2} \|\na u\|_{L^2}^2  + \sum\limits_{i=1}^{N} D_i \left\|\fr{\na c_i + z_ic_i \na \Phi}{\sqrt{c_i}} \right\|_{L^2}^2 \right) ds
\le \fr{1}{2} \|u_0\|_{L^2}^2 + \sum\limits_{i=1}^{N} E_i(0) + \fr{1}{2} \|\na \Phi_0\|_{L^2}^2 + C\left(\|f\|_{L^2}^2 + \|g\|_{L^2}^2\right) t, 
\ee 
after integrating in time from $0$ to $t$ and applying $\E$. 
The dissipation arising from the evolution of the ionic concentrations can be controlled as follows:
\be 
\beg{aligned}
\sum\limits_{i=1}^{N} D_i \left\|\fr{\na c_i + z_ic_i \na \Phi}{\sqrt{c_i}} \right\|_{L^2}^2
&\ge D \sum\limits_{i=1}^{N}  \left\|2\na \sqrt{c_i} + z_i\sqrt{c_i} \na \Phi \right\|_{L^2}^2
\\&= 4D\sum\limits_{i=1}^{N} \|\na \sqrt{c_i}\|_{L^2}^2
+ D\sum\limits_{i=1}^{N} z_i^2 \|\sqrt{c_i} \na \Phi\|_{L^2}^2
+ 2D (\na \rho, \na \Phi)_{L^2}
\ge D(\na \rho, \na \Phi)_{L^2}
= D\|\rho\|_{L^2}^2
\end{aligned}
\ee where $D$ is the minimum value of the diffusivities $D_1, \dots, D_N.$ It follows that the quadratic moment bound
\be \label{torusmoment17}
\E \int_{0}^{t} \left(\fr{1}{2} \|\na u\|_{L^2}^2 + D\|\rho\|_{L^2}^2 \right) ds
\le \fr{1}{2} \|u_0\|_{L^2}^2 + \sum\limits_{i=1}^{N} E_i(0) + \fr{1}{2} \|\na \Phi_0\|_{L^2}^2 + C\left(\|f\|_{L^2}^2 + \|g\|_{L^2}^2\right) t 
\ee holds for all times $t \ge 0$.

\smallskip
\noindent{\bf{Step 2. Charge density exponential moment bounds.}} In view of the stochastic inequality
\be
\beg{aligned}
& d\left(\fr{1}{2} \|u\|_{L^2}^2 + \sum\limits_{i=1}^{N} E_i + \fr{1}{2} \|\na \Phi\|_{L^2}^2 \right)
+ \left( \fr{1}{2} \|\na u\|_{L^2}^2  + D\|\rho\|_{L^2}^2 \right) dt
\le C\|f\|_{L^2}^2 dt
+ \fr{1}{2} \|g\|_{L^2}^2 dt
+ (g,u)_{L^2} dW,
\end{aligned}
\ee 
we have 
\be 
\beg{aligned}
&\E \exp{\left(\mu D\int_{0}^{t} \|\rho(s)\|_{L^2}^2 ds\right) }
\\&\le \exp \left(\mu a_0 + C\mu \left(\|f\|_{L^2}^2 + \|g\|_{L^2}^2\right) t \right) \E \exp \left(\int_{0}^{t} \mu (g,u)_{L^2} dW - \fr{\mu }{2} \int_{0}^{t} \|\na u(s)\|_{L^2}^2 ds  \right)
\end{aligned}
\ee 
for any $t \ge 0$ and  $\mu > 0$, where
\be \label{torusmoment7}
a_0 = \fr{1}{2} \|u_0\|_{L^2}^2 + \sum\limits_{i=1}^{N} E_i(0) + \fr{1}{2} \|\na \Phi_0\|_{L^2}^2.
\ee By making use of exponential martingale estimates, we infer that 
\be 
\E \exp{\left(\mu D\int_{0}^{t} \|\rho(s)\|_{L^2}^2 ds\right) }
\le \exp \left(\mu a_0 + C\mu \left(\|f\|_{L^2}^2 + \|g\|_{L^2}^2\right) t \right) 
\ee for any $t \ge 0$, provided that $\mu \in \left(0, \fr{1}{2 \|(-\Delta)^{-\fr{1}{2}}g \|_{L^2}^2} \right).$

\smallskip
\noindent{\bf{Step 3. Concentrations $L^2$ moment bounds.}} The sum of the $L^2$ norms of the ionic concentrations evolves in time according to the nonlinear deterministic equation 
 \be 
\fr{1}{2} \fr{d}{dt} \sum\limits_{i=1}^{N} \|c_i - \bar{c}_i\|_{L^2}^2 
+ \sum\limits_{i=1}^{N} D_i\|\na c_i\|_{L^2}^2 
= -  \sum\limits_{i=1}^{N} D_iz_i \int_{\mathcal{\TT^2}} (c_i - \bar{c}_i) \na \Phi \cdot \na c_i dx
-  \sum\limits_{i=1}^{N} D_i\bar{c}_i  \int_{\mathcal{\TT^2}} \na \Phi \cdot \na c_i dx.
\ee 
In view of the periodic elliptic estimate \eqref{torusmom} and $L^p$ interpolation inequalities, we have 
\be 
\|\na \Phi\|_{L^4}^4
\le C\|\rho\|_{L^{\fr{4}{3}}}^4
\le C\|\rho\|_{L^1}^2 \|\rho\|_{L^2}^2, 
\ee which reduces to 
\be 
\|\na \Phi\|_{L^4}^4
\le C\|\rho\|_{L^{\fr{4}{3}}}^4
\le C\left(\sum\limits_{i=1}^{N} |z_i| \int_{\TT^2} c_i(x,0) dx\right)^2 \|\rho\|_{L^2}^2
\ee due to the nonnegativity and conservation of the spatial averages of the ionic concentrations. As a consequence, the electromigration nonlinear term bounds as 
\be 
\beg{aligned}
&\left|\sum\limits_{i=1}^{N} D_iz_i \int_{\mathcal{\TT^2}} (c_i - \bar{c}_i) \na \Phi \cdot \na c_i dx \right|
\le \sum\limits_{i=1}^{N} D_i|z_i| \|c_i - \bar{c}_i\|_{L^4} \|\na \Phi\|_{L^4} \|\na c_i\|_{L^2}
\\&\quad\quad\le C\left(\sum\limits_{i=1}^{N} |z_i| \int_{\TT^2} c_i(x,0) dx\right)^{\fr{1}{2}}\sum\limits_{i=1}^{N} D_i|z_i| \|c_i - \bar{c}_i\|_{L^2}^{\fr{1}{2}} \|\rho\|_{L^2}^{\fr{1}{2}} \|\na c_i\|_{L^2}^{\fr{3}{2}}
\\&\quad\quad\le \sum\limits_{i=1}^{N} \fr{D_i}{4} \|\na c_i\|_{L^2}^2
+ C\left(\sum\limits_{i=1}^{N} |z_i| \int_{\TT^2} c_i(x,0) dx\right)^2 \|\rho\|_{L^2}^{2} \sum\limits_{i=1}^{N} D_i z_i^4 \|c_i - \bar{c}_i\|_{L^2}^2
\end{aligned}
\ee due to H\"older, Ladyzhenskaya, and Young inequalities. We infer that 
\be \label{torusmoment15}
\fr{d}{dt} \sum\limits_{i=1}^{N} \|c_i - \bar{c}_i\|_{L^2}^2 
+ \sum\limits_{i=1}^{N} D_i\|\na c_i\|_{L^2}^2
\le  C\left(\sum\limits_{i=1}^{N} |z_i| \int_{\TT^2} c_i(x,0) dx\right)^2 \|\rho\|_{L^2}^{2} \sum\limits_{i=1}^{N} D_i z_i^4 \|c_i - \bar{c}_i\|_{L^2}^2 
+ C\left(\sum\limits_{i=1}^{N} D_i \bar{c}_i^2 \right)\|\rho\|_{L^2}^2.
\ee Applying  the Poincar\'e inequality
$c\|c_i - \bar{c}_i\|_{L^2}^2 \le \|\na c_i\|_{L^2}^2 $ 
produces
\be 
\fr{d}{dt} \sum\limits_{i=1}^{N} \|c_i - \bar{c}_i\|_{L^2}^2
+\left(cD - r(t) \right) \sum\limits_{i=1}^{N} \|c_i - \bar{c}_i\|_{L^2}^2 \le 0
\ee where $D$ is the minimum of the diffusivities and 
\be 
r(t) = C \left(\max\limits_{1 \le i \le N} D_i \right) \left(\max\limits_{1 \le i \le N} |z_i|\right)^6 \left(\sum\limits_{i=1}^{N} \int_{\TT^2} c_i(x,0) dx\right)^2  \|\rho(t)\|_{L^2}^2 + C\left(\max\limits_{1 \le i \le N}  |z_i|\right)^2 \sum\limits_{i=1}^{N} D_i \left(\int_{\TT^2} c_i(x,0)dx\right)^2.
\ee Multiplying by the integrating factor and integrating in time from $0$ to $t$, we obtain {
\be \label{squareb}
\sum\limits_{i=1}^{N} \|c_i(t) - \bar{c}_i\|_{L^2}^2
\le  \left(\sum\limits_{i=1}^{N} \|c_i(0) - \bar{c}_i\|_{L^2}^2\right) \exp \left(-cDt + \int_{0}^{t} r(s) ds \right)
\ee
for any $t \ge 0$. Now let $p \in \left\{1, 2, 3, 4\right\}.$ From \eqref{squareb}, we deduce the moment bound
\be \la{torusmoment6}
\E \left(\sum\limits_{i=1}^{N} \|c_i(t) - \bar{c}_i\|_{L^2}^2\right)^p
\le  \left(\sum\limits_{i=1}^{N} \|c_i(0) - \bar{c}_i\|_{L^2}^2\right)^p \E\exp \left(-cpDt + p\int_{0}^{t} r(s) ds \right)
\ee holds for all $t \ge 0.$} As a consequence of Step 2, it holds that 
\be 
\E \exp \left(\mu D  \int_{0}^{t} \|\rho(s)\|_{L^2}^2 ds \right) 
\le \exp \left(\mu a_0 + C'\mu \left(\|f\|_{L^2}^2 + \|g\|_{L^2}^2\right) t \right) 
\ee where  
\be \label{torusmoment10}
\mu: = \fr{C{p}}{D} \left(\max\limits_{1 \le i \le N} D_i \right) \left(\max\limits_{1 \le i \le N} |z_i|\right)^6 \left(\sum\limits_{i=1}^{N} \int_{\TT^2} c_i(x,0) dx\right)^2 \le \min\left\{\fr{1}{2\|(-\Delta)^{-\fr{1}{2}}g\|_{L^2}^2}, 1 \right\}.
\ee Furthermore, if $\mu$ is chosen so that 
\be \label{torusmoment11}
\mu \le \fr{cD}{4C'\left(\|f\|_{L^2}^2 + \|g\|_{L^2}^2 \right)}
\ee and the initial spatial averages are chosen so that 
\be \label{torusmoment12}
 C \left(\max\limits_{1 \le i \le N}  |z_i|\right)^2 \sum\limits_{i=1}^{N} D_i \left(\int_{\TT^2} c_i(x,0)dx\right)^2
 \le \fr{cD}{4},
\ee the bound \eqref{torusmoment6} boils down to {
\be \label{torusmoment13}
\E \left(\sum\limits_{i=1}^{N} \|c_i(t) - \bar{c}_i\|_{L^2}^2\right)^p
\le \left(\sum\limits_{i=1}^{N} \|c_i(0) - \bar{c}_i\|_{L^2}^2 \right)^p e^{a_0} e^{-\fr{cpD}{2}t},
\ee }for any $t \ge 0$, where $a_0$ is given by \eqref{torusmoment7}. Thus, we assume that the spatial averages of the initial ionic concentrations are sufficiently small so that \eqref{torusmoment10}, \eqref{torusmoment11}, and \eqref{torusmoment12} are satisfied {for $p=1, 2, 3, 4$}. Under this smallness condition, the decaying-in-time estimate \eqref{torusmoment13} holds for any $t \ge 0$ {and $p \in \left\{1, 2, 3, 4\right\}.$} 

\smallskip
\noindent{\bf{Step 4. Concentrations $H^1$ quadratic moment bounds.}} 
Integrating \eqref{torusmoment15} in time from $0$ to $t$ and applying the expectation $\E$, we deduce the existence of a positive constant $C$ depending on the parameters of the problem, the $L^2$ norms of $f$ and $g$, and some universal constants such that the moment estimate   
\be 
\E \int_{0}^{t} \sum\limits_{i=1}^{N} D_i \|\na c_i(s)\|_{L^2}^2 ds
\le C\int_{0}^{t} \E \left(\left(\sum\limits_{i=1}^{N} \|c_i(s) - 
 \bar{c}_i\|_{L^2}^2 \right)^2 + \left(\sum\limits_{i=1}^{N} \|c_i(s) - 
 \bar{c}_i\|_{L^2}^2 \right) \right) ds
\ee holds for any $t \ge 0$. As a consequence of \eqref{torusmoment13}, we deduce that \eqref{torusmoment16} holds, ending the proof of Proposition \ref{torusmoment18}.  
\end{proof}

\beg{rem} The moment bounds in the periodic setting hold for the S-NPNS system in the case of $N$ ionic species with different diffusivities and valences. Indeed, the potential $\Phi$ solving the Poisson equation equipped with periodic boundary conditions satisfies elliptic estimates that yield good control of its $L^4$ norm by the product of the $L^1$ and $L^2$ norms of the charge density $\rho$, for which we have exponential moment bounds. In contrast, this does not hold on bounded domains where $\|\na \Phi\|_{L^4}^4$ is controlled via interpolation by $\|\na \Phi\|_{L^2}^2 \|\rho\|_{L^2}^2$ for which exponential moment bounds are very challenging to obtain. In this latter case, different assumptions are imposed either on the parameters of the problem or the forcing terms, and different techniques are established to address the ergodicity of the model. 
\end{rem}

{
Interested in the existence and regularity of invariant measures for the periodic S-NPNS system, we seek higher-order Sobolev moment bounds. For that purpose, we also need the following product moment estimates:

\beg{lem} \label{productmomentbouds} Let $u_0 \in H$  and $c_i(0) \in L^2$ be nonnegative for all $i \in \left\{1, \dots, N \right\}$. Assume $f \in H$ and $g \in H$ are time-independent. Suppose the initial spatial averages of the ionic concentrations are sufficiently small. It holds that
\be \label{torusmoment30}
\E \int_{0}^{t} \|c_i(s) - \bar{c}_i\|_{L^2}^2 \|\na c_i(s)\|_{L^2}^2 ds \le \mathcal{J}_1
\ee 
\be \label{torusmoment31}
\E \int_{0}^{t} \|u(s)\|_{L^2}^2 \|\na u(s)\|_{L^2}^2 ds \le \mathcal{J}_2 + \mathcal{J}_3 t,
\ee for any $t \ge 0$, where $\mathcal{J}_1$ and $\mathcal{J}_2$ are nonnegative constants depending on the valences, diffusivities, the number of ionic species, the $L^2$ norm of the initial velocity and concentrations, the body forces $f$, the noise $g$, and some universal constants such that $\mathcal{J}_1 = \mathcal{J}_2 = 0$ when $c_i(0) = \bar{c}_i$ and $u_0 = 0$, whereas $J_3$ is a nonnegative constant depending on the valences, diffusivities, the number of ionic species, the body forces $f$, the noise $g$, and some universal constants.
\end{lem}

\beg{proof} We multiply the energy evolution \eqref{torusmoment15} by the sum $\sum\limits_{i=1}^{N} \|c_i - \bar{c}_i\|_{L^2}^2$, 
integrate in time the resulting differential inequality from $0$ to $t$, apply the expectation $\E$, make use of the decaying-in-time estimates \eqref{torusmoment13} for $p = 2,3$, and deduce \eqref{torusmoment30}. 

The fourth power of the $L^2$ norm of the velocity satisfies the stochastic equation
\be
\beg{aligned}
 &d \|u\|_{L^2}^4
+ 4\|u\|_{L^2}^2 \|\na u\|_{L^2}^2 dt
\\&= -4 \|u\|_{L^2}^2 (\rho \na \Phi, u)_{L^2} dt
+ 4\|u\|_{L^2}^2(f,u)_{L^2} dt
+  2\|u\|_{L^2}^2 \|g\|_{L^2}^2 dt
+ 4(g,u)_{L^2}^2 dt
+ 4\|u\|_{L^2}^2 (g,u)_{L^2} dW.
\end{aligned}
\ee We bound the nonlinear
term in $\rho$ as follows,
\be 
\beg{aligned}
4 \|u\|_{L^2}^2  |(\rho \na \Phi, u)_{L^2}|
&\le C\|u\|_{L^2}^2 \|\rho\|_{L^2} \|\na \Phi\|_{L^4} \|u\|_{L^4}
\le C \|\rho\|_{L^2}^2 \|u\|_{L^2}^2 \|\na u\|_{L^2}
\\&\le C \|\rho\|_{L^2}^2 \|u\|_{L^2}^{\fr{3}{2}} \|\na u\|_{L^2}^{\fr{3}{2}}
\le \fr{1}{2} \|u\|_{L^2}^2 \|\na u\|_{L^2}^2 + C\|\rho\|_{L^2}^8.
\end{aligned}
\ee The above estimates are based on interpolation, elliptic regularity, and applications of the Poincar\'e inequality to the mean-free velocity vector field $u$. As a consequence, we obtain the differential inequality
\be 
d\|u\|_{L^2}^4
+ \|u\|_{L^2}^2 \|\na u\|_{L^2}^2
\le C\|\rho\|_{L^2}^8 + C\|f\|_{L^2}^4 + C\|g\|_{L^2}^4+ 4\|u\|_{L^2}^2 (g,u)_{L^2} dW,
\ee which gives rise to the moment estimate \eqref{torusmoment31} after making use of the decaying bound \eqref{torusmoment13} for $p=4$.
\end{proof}
}

\beg{prop} \label{torussmoothness1} Fix an integer $m \ge 1$. Let $u_0 \in H^m \cap H$ and $c_i(0) \in H^m$ be nonnegative for all $i \in \left\{1, \dots, N \right\}$. Assume $f \in H$ and $g \in H$ are time-independent. Suppose the initial spatial averages of the ionic concentrations are sufficiently small. Then the logarithmic moment bound 
\be \label{torusmoment20}
\E\int_{0}^{t} \log \left(1 + \|\l^{m+1} u(s)\|_{L^2}^2 + \sum\limits_{i=1}^{N} D_i \|\l^{m+1} (c_i - \bar{c}_i)(s)\|_{L^2}^2 \right) ds  
\le \mathcal{J}_{0,m} + \mathcal{J}_{1,m}t
\ee holds for all $t \ge 0$, where $\mathcal{J}_{0,m}$ is a nonnegative constant depending on the valences, diffusivities, the number of ionic species, the $H^m$ norm of the initial velocity and concentrations, and some universal constants, whereas $\mathcal{J}_{1,m}$ is a positive constant depending on the valences, the diffusivities, the number of species, the $H^{m}$ norm of the body forces, the $H^{m}$ norm of the noise $g$, and some universal constants. 
\end{prop}

\begin{proof} We present a proof by induction. The base step $(m=1)$ follows along the lines of the proof of Proposition \ref{boundedsmooth}. Supposing that the statement of Proposition \ref{torussmoothness1}  holds at the $(m-1)$-th iteration, we show that it remains true at the subsequent $m$-th level. For that purpose, we address the stochastic evolution of the energies $\|\l^m (c_i - \bar{c}_i)\|_{L^2}^2$ and $\|\l^m u\|_{L^2}^2$.
Indeed, we have
\be 
\beg{aligned}
&\fr{d}{dt} \|\l^m (c_i - \bar{c}_i)\|_{L^2}^2 
+ 2D_i \|\l^{m+1} (c_i - \bar{c}_i)\|_{L^2}^2 
\\&= - 2(u \cdot \na c_i, \l^{2m}(c_i - \bar{c}_i))_{L^2}
+ 2D_i z_i (\na \cdot ((c_i - \bar{c}_i) \na \Phi), \l^{2m} (c_i - \bar{c}_i))_{L^2} 
+ 2D_i z_i \bar{c}_i (\Delta \Phi, \l^{2m}(c_i - \bar{c}_i))_{L^2}
\end{aligned}
\ee and
\be 
\beg{aligned}
&d \|\l^m u\|_{L^2}^2
+ 2\|\l^{m+1} u\|_{L^2}^2 dt
\\&=- 2(u \cdot \na u, \l^{2m} u)_{L^2} dt
- 2(\rho \na \Phi, \l^{2m} u)_{L^2} dt
+ 2(f, \l^{2m} u)_{L^2} dt
+ \|\l^m g\|_{L^2}^2 dt
- 2(g, \l^{2m} u)_{L^2} dW.
\end{aligned}
\ee For each $t \ge 0$, we consider the instantaneous stochastic processes 
\be 
\mathcal{X}_m(t)= \|\l^m u(t)\|_{L^2}^2 + \sum\limits_{i=1}^{N} \|\l^m (c_i - \bar{c}_i)(t)\|_{L^2}^2 \quad \text{and} \quad
\mathcal{Y}_m(t) = 2\|\l^{m+1} u(t)\|_{L^2}^2 + 2\sum\limits_{i=1}^{N} D_i \|\l^{m+1} (c_i - \bar{c}_i)(t)\|_{L^2}^2,
\ee and we note that the stochastic evolution of $\log (1 + \mathcal{X}_m)$ is described by
\be 
\beg{aligned}
d \log (1 + \mathcal{X}_m)
+ \frac{\mathcal{Y}_m}{1 + \mathcal{X}_m} dt
=\frac{\mathcal{A}_m}{1 + \mathcal{X}_m} dt - \fr{2(g, \l^{2m}u)_{L^2}^2}{(1+\mathcal{X}_m)^2} dt - \frac{2(g, \l^{2m} u)_{L^2}}{1 + \mathcal{X}_m} dW
\end{aligned}
\ee where
\be 
\beg{aligned}
\mathcal{A}_m 
&= - 2\sum\limits_{i=1}^{N} (u \cdot \na c_i, \l^{2m}(c_i - \bar{c}_i))_{L^2}
+ 2\sum\limits_{i=1}^{N} D_i z_i (\na \cdot ((c_i - \bar{c}_i) \na \Phi), \l^{2m} (c_i - \bar{c}_i))_{L^2} 
\\&\quad\quad+ 2\sum\limits_{i=1}^{N} D_i z_i \bar{c}_i (\Delta \Phi, \l^{2m}(c_i - \bar{c}_i))_{L^2}
- 2(u \cdot \na u, \l^{2m} u)_{L^2} 
- 2(\rho \na \Phi, \l^{2m} u)_{L^2} 
\\&\quad\quad+ 2(f, \l^{2m} u)_{L^2} 
+ \|\l^m g\|_{L^2}^2.
\end{aligned}
\ee 
In the sequel, we will prove that the estimate 
\be \label{torussmoothness3}
\fr{\mathcal{A}_m}{1 + \mathcal{X}_m}
\le \fr{1}{2} \fr{\mathcal{Y}_m}{1 + \mathcal{X}_m} + 
 {C \|u\|_{L^2}^2 \|\na u\|_{L^2}^2 + C\sum\limits_{i=1}^{N} \|c_i - \bar{c}_i\|_{L^2}^2 \|\na c_i\|_{L^2}^2}  + C\|\l^m g\|_{L^2}^2 + C\|\l^{m-1} f\|_{L^2}^2 + C \sum\limits_{i=1}^{N} D_i z_i^2 \bar{c}_i^2
\ee holds and deduce the differential inequality
\be 
\beg{aligned}
&d \log (1 + \mathcal{X}_m)
+ \fr{1}{2} \frac{\mathcal{Y}_m}{1 + \mathcal{X}_m} dt
\leq  
 {C \|u\|_{L^2}^2 \|\na u\|_{L^2}^2dt + C\sum\limits_{i=1}^{N} \|c_i - \bar{c}_i\|_{L^2}^2 \|\na c_i\|_{L^2}^2 dt} 
 \\&\quad\quad\quad\quad+  C\|\l^m g\|_{L^2}^2dt + C\|\l^{m-1} f\|_{L^2}^2dt + C \sum\limits_{i=1}^{N} D_i z_i^2 \bar{c}_i^2 dt - \frac{2(g, \l^{2m} u)_{L^2}}{1 + \mathcal{X}_m} dW,
\end{aligned}
\ee from which we obtain the bound 
\be \label{torussmoothness6}
\E \int_{0}^{t} \frac{\mathcal{Y}_m}{1 + \mathcal{X}_m} ds \le C_1(\|\l^m u_0\|_{L^2}, \|\l^m (c_i - \bar{c}_i(0))\|_{L^2}, f, g) + C_2(f, g)t
\ee after integrating in time from $0$ to $t$, applying $\E$, and making use of {Lemma \ref{productmomentbouds}}. We proceed to prove the estimate \eqref{torussmoothness3}. 
Integrating by parts, exploiting the divergence-free property of $u$, applying the product estimate {
\be 
\beg{aligned}
\|\l^m(h_1h_2)\|_{L^2} 
&\le C\|h_1\|_{L^{4}} \|\l^m h_2\|_{L^4} + \|h_2\|_{L^{4}} \|\l^m h_1\|_{L^4}
\\&\le  C\|h_1\|_{L^{2}}^{\fr{1}{2}} \|\na h_1\|_{L^2}^{\fr{1}{2}} \|\l^m h_2\|_{L^2}^{\fr{1}{2}} \|\l^{m+1} h_2\|_{L^2}^{\fr{1}{2}} 
+ \|h_2\|_{L^{2}}^{\fr{1}{2}} \|\na h_2\|_{L^2}^{\fr{1}{2}} \|\l^m h_1\|_{L^2}^{\fr{1}{2}}\|\l^{m+1} h_1\|_{L^2}^{\fr{1}{2}}
\end{aligned}
\ee} that holds for any mean-free functions $h_1$ and $h_2$,
we deduce the following bound
\be \label{torussmoothness4}
\beg{aligned}
&\frac1{1 + \mathcal{X}_m}\left| 2\sum\limits_{i=1}^{N} (u \cdot \na c_i, \l^{2m}(c_i - \bar{c}_i))_{L^2} \right| 
\le \frac{C\sum\limits_{i=1}^{N}\|\l^{m+1}(c_i - \bar{c}_i)\|_{L^2} \|\l^{m-1} \na \cdot (u(c_i - \bar{c}_i))\|_{L^2}}{1 + \mathcal{X}_m} 
\\&\le \fr{1}{12}\fr{\mathcal{Y}_m}{1 + \mathcal{X}_m} + \frac{C \sum\limits_{i=1}^{N}\left(\|\l^m u\|_{L^2}^2 {\|c_i - \bar{c}_i\|_{L^2}^2 \|\na c_i\|_{L^2}^2} + \|\l^m (c_i - \bar{c}_i)\|_{L^2}^2 { \|u\|_{L^2}^2 \|\na u\|_{L^2}^2} \right)}{1 + \mathcal{X}_m}
\\&\le \fr{1}{12}\fr{\mathcal{Y}_m}{1 + \mathcal{X}_m} + C \sum\limits_{i=1}^{N} { \|c_i - \bar{c}_i\|_{L^2}^2 \|\na c_i\|_{L^2}^2} + C\|u\|_{L^2}^2 \|\na u\|_{L^2}^2.
\end{aligned}
\ee Using, in addition, the elliptic regularity obeyed by the potential $\Phi$, we have
\be 
\beg{aligned}
&\frac1{1 + \mathcal{X}_m}\left|2\sum\limits_{i=1}^{N} D_i z_i (\na \cdot ((c_i - \bar{c}_i) \na \Phi), \l^{2m} (c_i - \bar{c}_i))_{L^2} \right|
\\
\le &\fr{1}{12}\fr{\mathcal{Y}_m}{1 + \mathcal{X}_m} + \frac{C \sum\limits_{i=1}^{N}\left(\|\l^m (c_i - \bar{c}_i)\|_{L^2}^2 { \|\na \Phi\|_{L^{2}}^2 \|\rho\|_{L^2}^2 } + \|\l^m \na \Phi\|_{L^2}^2 { \|c_i - \bar{c}_i\|_{L^{2}}^2 \|\na c_i\|_{L^2}^2 }\right)}{1 + \mathcal{X}_m}
\\
\le &\fr{1}{12}\fr{\mathcal{Y}_m}{1 + \mathcal{X}_m} + C \sum\limits_{i=1}^{N} { \|c_i - \bar{c}_i\|_{L^2}^2 \|\na c_i\|_{L^2}^2}.
\end{aligned}
\ee Integration by parts and the use of the Poisson equation obeyed by $\Phi$ yield
\be 
\frac1{1 + \mathcal{X}_m}\left| 2\sum\limits_{i=1}^{N} D_i z_i \bar{c}_i (\Delta \Phi, \l^{2m}(c_i - \bar{c}_i))_{L^2}\right| \le \fr{1}{12}\fr{\mathcal{Y}_m}{1 + \mathcal{X}_m} + C \sum\limits_{i=1}^{N} D_i z_i^2 \bar{c}_i^2.
\ee We apply again standard product estimates and continuous Sobolev embeddings to estimate the nonlinear terms
\be 
\frac1{1 + \mathcal{X}_m}\left|2(u \cdot \na u, \l^{2m} u)_{L^2} \right| 
\le \fr{1}{12}\fr{\mathcal{Y}_m}{1 + \mathcal{X}_m} + { C\|u\|_{L^2}^2 \|\na u\|_{L^2}^2},
\ee and
\be \label{torussmoothness5}
\frac1{1 + \mathcal{X}_m}\left|2(\rho \na \Phi, \l^{2m} u)_{L^2}\right| \le  \fr{1}{12}\fr{\mathcal{Y}_m}{1 + \mathcal{X}_m} + C \sum\limits_{i=1}^{N} { \|c_i - \bar{c}_i\|_{L^2}^2 \|\na c_i\|_{L^2}^2}.
\ee 
Putting \eqref{torussmoothness4}--\eqref{torussmoothness5} together, we obtain the desired bound \eqref{torussmoothness3} for $\mathcal{A}_m$ from which \eqref{torussmoothness6} follows. 

In view of the logarithmic estimate
\be 
E \int_{0}^{t} \log (1 + \mathcal{Y}_m) ds \le \E \int_{0}^{t} \frac{\mathcal{Y}_m}{1 + \mathcal{X}_m} ds + \E \int_{0}^{t} \log (1 + \mathcal{X}_m) ds
\ee and the induction hypothesis, we infer that Proposition \ref{torussmoothness1} holds at the $m$-th regularity stage. 
\end{proof}

\section{The Two-Species Model: Exponential Ergodicity} \label{two}

In this section, we consider the periodic S-NPNS model for two ionic species with equal diffusivities $D$ and valences $1$ and $-1$ respectively. This model is described by the system of equations
\noeqref{twospeciescharge}
\begin{subequations}\label{sys:npns-2species}
    \begin{align}
        &d u + u \cdot \na u dt - \Delta u dt + \na p dt = - \rho \na \Phi dt + f dt + g dW, \label{twospeciesvel}
        \\
        &\pa_t c_1 + u \cdot \na c_1 - D\Delta c_1 = D \na \cdot (c_1 \na \Phi),\label{twospeciescon1}
        \\
        &\pa_t c_2 + u \cdot \na c_2 - D \Delta c_2 = -D \na \cdot (c_2 \na \Phi), \label{twospeciescon2}
        \\
        &-\Delta \Phi = \rho = c_1 - c_2, \label{twospeciescharge}
        \\
        &\na \cdot u = 0. \label{twospeciesdiv}
    \end{align}
\end{subequations}
on $\TT^2$ with periodic boundary conditions. We define $\sigma$ to be the sum of the two ionic concentrations, that is
$\sigma = c_1 + c_2,$
and we denote its spatial average by $\bar{\sigma}$. By adding and subtracting \eqref{twospeciescon1} and \eqref{twospeciescon2}, we obtain the equations obeyed by $\rho$ and $\sigma$ as follows:
\begin{align} \label{twospeciesrho}
&\pa_t \rho + u \cdot \na \rho - D \Delta \rho 
= D \na \cdot (\sigma \na \Phi),\\
\label{twospeciessigma}
&\pa_t \sigma + u \cdot \na \sigma - D \Delta \sigma 
= D \na \cdot (\rho \na \Phi).
\end{align}

The following theorem is the main result of this section.


\beg{Thm} \label{maintheoremtwospecies} 
Suppose that Setting \ref{setting D} holds. There exists an ergodic invariant probability measure $\pi$ for the Markov semigroup \eqref{markovperiodic} associated with the periodic two-species S-NPNS problem \eqref{sys:npns-2species}. If $D$ is sufficiently large, then there exists an integer $n:= n(f, g)$ depending only on the body forces $f$, the noise $g$, the parameters of the problem, and some universal constants such that if $P_n H \subset range(g)$, then the invariant measure is unique. Moreover, there exist positive constants $r, C >0$ such that the decaying-in-time estimate
\be 
W_{\|\omega - \tilde{\omega}\|_{\mathcal{H}} \wedge  1} (P_t(\omega, \cdot), \pi)
\le C(1  + \|\omega\|_{\mathcal{H}}) e^{-rt},
\ee holds, where $W$ denotes the Wasserstein metric and is defined by \eqref{wasserstein}.
\end{Thm}

The proof of Theorem \ref{maintheoremtwospecies} follows from the generalized coupling framework presented in Appendix \ref{ExpErgFramework} and Propositions \ref{prop:twospecies-1} and \ref{prop:twospecies-2} below.

\beg{prop} \label{prop:twospecies-1}
Let $(u_0, c_1(0), c_2(0)) \in \tilde{\mathcal{H}}_{per}.$ For each time $t \ge 0$, we define the energies
\be 
Y_{E}(t) = \|u(t)\|_{L^2}^2 + \|\rho(t)\|_{L^2}^2 + \|\sigma(t) - 
\bar{\sigma}\|_{L^2}^2 + \|\na \Phi(t)\|_{L^2}^2
\ee and 
\be 
Y_{D}(t) = \|\na u\|_{L^2}^2 + 2D \left(\|\na \rho(t)\|_{L^2}^2 + \|\na \sigma(t)\|_{L^2}^2 + \|\rho(t)\|_{L^2}^2 + \|\sqrt{\sigma} \rho (t)\|_{L^2}^2 +  \|\sqrt{\sigma} \na \Phi (t)\|_{L^2}^2 \right).
\ee 
The following stochastic inequality holds, for all times $t \ge 0,$ 
\be \label{twospecies5}
Y_{E}(t) + \int_{0}^{t} Y_{D}(s) ds 
\le Y_{E}(0) + C\|g\|_{L^2}^2 t + C \|f\|_{L^2}^2 t + 2(g, u)_{L^2} dW.
\ee
\end{prop}

\begin{proof}
The stochastic evolution of the $L^2$ norm of the velocity $u$ is described by the stochastic equation
\be \label{twospecies1}
d \|u\|_{L^2}^2 + 2 \|\na u\|_{L^2}^2 dt
= -2(\rho \na \Phi, u)_{L^2} dt + 2(f,u)_{L^2}^2 dt 
+ \|g\|_{L^2}^2 dt
+ 2(g,u)_{L^2}dW.
\ee Taking the scalar product in $L^2$ of the charge density equation \eqref{twospeciesrho} obeyed by $\rho$ with $\Phi$ brings 
\be \label{twospecies2}
\fr{1}{2} \fr{d}{dt} \|\na \Phi\|_{L^2}^2 + D\|\rho\|_{L^2}^2 = - (u \cdot \na \rho, \Phi)_{L^2} - D (\sigma \na \Phi, \na \Phi)_{L^2}.
\ee Adding \eqref{twospecies1} and \eqref{twospecies2}, using the cancellation 
\be 
(\rho \na \Phi, u)_{L^2}  + (u \cdot \na \rho, \Phi)_{L^2}= 0 ,
\ee and applying the Cauchy-Schwarz inequality, we obtain 
\be \label{twospecies4}
\beg{aligned}
&d \left(\|u\|_{L^2}^2 + \|\na \Phi\|_{L^2}^2\right)
+ \left(\|\na u\|_{L^2}^2 + D\|\rho\|_{L^2}^2 + D\|\sqrt{\sigma} \na \Phi\|_{L^2}^2  \right) dt 
\le  \left(C\|f\|_{L^2}^2 + \|g\|_{L^2}^2\right) dt + 2(g,u)_{L^2} dW.
\end{aligned}
\ee Finally, we take the $L^2$ inner product of the equation \eqref{twospeciesrho} obeyed by $\rho$ with $\rho$ and the equation \eqref{twospeciessigma} obeyed by $\sigma$ with $\sigma$ and add them. In view of the cancellations
\be 
(u \cdot \na \rho, \rho)_{L^2} = (u \cdot \na \sigma, \sigma)_{L^2} = 0,
\ee 
\be 
\beg{aligned}
(\na \cdot (\sigma \na \Phi), \rho)_{L^2} + (\na \cdot (\rho \na \Phi), \sigma)_{L^2}
&= -(\sigma \na \Phi, \na \rho)_{L^2} + (\na \rho \cdot \na \Phi, \sigma)_{L^2}
+ (\rho \Delta \Phi, \sigma)_{L^2}
= - (\rho^2, \sigma)_{L^2},
\end{aligned}
\ee we obtain the deterministic energy equality
\be \label{twospecies3}
\fr{1}{2} \fr{d}{dt} \left(\|\rho\|_{L^2}^2 + \|\sigma - 
\bar{\sigma}\|_{L^2}^2 \right)
+ D \|\na \rho\|_{L^2}^2 + D\|\na \sigma\|_{L^2}^2
+ D\|\sqrt{\sigma} \rho\|_{L^2}^2 = 0,
\ee after making use of the identity
\be 
\fr{d}{dt} \|\sigma - \bar{\sigma}\|_{L^2}^2 = \fr{d}{dt} \|\sigma\|_{L^2}^2 -2 \fr{d}{dt} (\sigma, \bar{\sigma})_{L^2}
= \fr{d}{dt} \|\sigma\|_{L^2}^2 - 2\bar{\sigma} \fr{d}{dt} \int_{\TT^2} \sigma dx 
= \fr{d}{dt} \|\sigma\|_{L^2}^2.
\ee We add \eqref{twospecies4} and \eqref{twospecies3}, integrate in time, and deduce \eqref{twospecies5}.
\end{proof}

\beg{prop} \label{prop:twospecies-2}
Let $(u_0, c_1(0), c_2(0))$ and $(\tilde{u}_0, \tilde{c}_1(0), \tilde{c_2}(0)) \in \tilde{\mathcal{H}}_{per}$.  Let $\lambda > 0$ be a positive constant.  We denote by $(u(t), c_1(t), c_2(t))$ the solution to the two-species S-NPNS system \eqref{sys:npns-2species} with initial data $(u_0, c_1(0), c_2(0))$, and by $(\tilde{u}(t), \tilde{c}_1(t), \tilde{c}_2(t))$ the solution to the modified system
\noeqref{twospeciesmodified2}
\noeqref{twospeciesmodified3}
\noeqref{twospeciesmodified4}
\begin{subequations}\label{sys:npns-modified}
    \begin{align}
        \label{twospeciesmodified1}
&d \tilde{u} + \tilde{u} \cdot \na \tilde{u} dt- \Delta \tilde{u} dt + \na \tilde{p} dt= - \tilde{\rho} \na \tilde{\Phi} dt 
+ f dt 
+ \lambda P_n(u - \tilde{u}) dt
+ g dW
\\
\label{twospeciesmodified2}
&\pa_t \tilde{c}_1 + \tilde{u} \cdot \na \tilde{c}_1 - D\Delta \tilde{c}_1 = D\na \cdot (\tilde{c}_1 \na \tilde\Phi),
\\
\label{twospeciesmodified3}
&\pa_t \tilde{c}_2 + \tilde{u} \cdot \na \tilde{c}_2 - D\Delta \tilde{c}_2 = -D\na \cdot (\tilde{c}_2 \na \tilde\Phi),
\\
\label{twospeciesmodified4}
&-\Delta \tilde{\Phi} = \tilde{\rho} = \tilde{c}_1 - \tilde{c}_2,
\\
\label{twospeciesmodified5}
&\na \cdot \tilde{u} = 0,
    \end{align}
\end{subequations}
with initial data $(\tilde{u}_0, \tilde{c}_1(0), \tilde{c}_2(0))$. Letting $\tilde{\sigma} = \tilde{c}_1 + \tilde{c}_2$, we define the instantaneous energy
\be \label{twospeciesmodified6}
\mathcal{Q}(t) = \|u(t) - \tilde{u}(t)\|_{L^2}^2
+ \|\rho(t) - \tilde{\rho}(t)\|_{L^2}^2
+ \|\sigma(t) - \tilde{\sigma}(t)\|_{L^2}^2
+ \|\na (\Phi - \tilde{\Phi})(t)\|_{L^2}^2
\ee at a positive time $t$. We denote by $\lambda_{n+1}$ the $(n+1)$ eigenvalue of the periodic Laplacian. Then there exist positive constants $c, C>0$ such that the following dissipativity bound
\be \label{twospecieserg23}
\|\mathcal{Q}(t)\|_{L^2}^2
\le \|\mathcal{Q}(0)\|_{L^2}^2
e^{- \min \left\{cD, \lambda_{n+1} \right\}t + C\int_{0}^{t} \left(\|\na u(s)\|_{L^2}^2 + \|\na \rho(s)\|_{L^2}^2 +\|\sigma\|_{L^2}^2 + \|\na \sigma(s)\|_{L^2}^2 \right) ds}
\ee holds for any $t \ge 0$, provided that $\lambda \ge \lambda_{n+1}.$
\end{prop}

\begin{proof}
    We define the differences
    \be 
U = u - \tilde{u}, \quad R = \rho - \tilde{\rho},\quad S = \sigma - \tilde{\sigma}, \quad\Psi = \Phi - \tilde{\Phi}, \quad P = p - \tilde{p}.
    \ee 
    These differences satisfy the following system of equations
\noeqref{twospecieserg4}
\noeqref{twospecieserg5-1}
\begin{subequations}\label{sys:2species-difference}
    \begin{align}
        \label{twospecieserg1}
&\pa_t U - \Delta U + \na P + \lambda P_n U 
= - U \cdot \na u - \tilde{u} \cdot \na U
- R \na \Phi - \tilde{\rho} \na \Psi,
\\
\label{twospecieserg2}
&\pa_t R - D \Delta R 
= - U \cdot \na \rho - \tilde{u} \cdot \na R
+ D \na \cdot (S \na \Phi) + D \na \cdot (\tilde{\sigma} \na \Psi),
   \\
   \label{twospecieserg3}
&\pa_t S - D \Delta S 
= - U \cdot \na \sigma - \tilde{u} \cdot \na S
+ D \na \cdot (R \na \Phi) + D \na \cdot (\tilde{\rho} \na \Psi),
    \\ \label{twospecieserg4}
&- \Delta \Psi = R,
    \\ \label{twospecieserg5-1}
&\na \cdot U = 0.
    \end{align}
\end{subequations}

\noindent\textbf{Step 1. $L^2$ evolution of $R$ and $S$.} We take scalar the scalar products in $L^2$ of the equations \eqref{twospecieserg2} and \eqref{twospecieserg3} obeyed by $R$ and $S$ with $R$ and $S$ respectively and add them. We then obtain the energy equality
\be \label{twospecieserg5}
\beg{aligned}
&\fr{1}{2} \fr{d}{dt} \left(\|R\|_{L^2}^2 + \|S\|_{L^2}^2\right) + D\left(\|\na R\|_{L^2}^2 + \|\na S\|_{L^2}^2 \right) 
\\&\quad\quad= -(U \cdot \na \rho, R)_{L^2} 
- (U \cdot \na \sigma, S)_{L^2} 
+ D(\na \cdot (S \na \Phi), R)_{L^2} 
\\&\quad\quad\quad\quad+ D (\na \cdot (\tilde{\sigma} \na \Psi), R)_{L^2} + D (\na \cdot (R \na \Phi), S)_{L^2} 
+ D(\na \cdot (\tilde{\rho} \na \Psi), S)_{L^2}. 
\end{aligned}
\ee 
Integrating by parts and using the positivity of $\tilde{\sigma}$ produce
\be 
(\na \cdot (S \na \Phi), R)_{L^2} 
+ (\na \cdot (R \na \Phi), S)_{L^2} 
= - (S \na \Phi, \na R)_{L^2} 
+ (\na R \cdot  \na \Phi, S)_{L^2}
+ (R \Delta \Phi, S)_{L^2}
= - (R \rho, S)_{L^2}
\ee 
and 
\be \label{twospecieserg6}
\beg{aligned}
&(\na \cdot (\tilde{\sigma} \na \Psi), R)_{L^2}
+ (\na \cdot (\tilde{\rho} \na \Psi), S)_{L^2}
= - (\tilde{\sigma} \na \Psi, \na R)_{L^2}
+ (\na \tilde{\rho} \cdot \na \Psi, S)_{L^2}
+ (\tilde{\rho} \Delta \Psi, S)_{L^2}
\\&= (S \na \Psi, \na R)_{L^2} - (\sigma \na \Psi, \na R)_{L^2}
+ (\na \rho \cdot \na \Psi, S)_{L^2}
- (\na R \cdot \na \Psi, S)_{L^2}
+ (\rho \Delta \Psi, S)_{L^2}
- (R \Delta \Psi, S)_{L^2}
\\&= -(\sigma \na \Psi, \na R)_{L^2}
+ (\na \rho \cdot\na \Psi, S)_{L^2}
- (\rho R, S)_{L^2}
+ (R^2, S)_{L^2}
\\&= -(\sigma \na \Psi, \na R)_{L^2}
+ (\na \rho \cdot\na \Psi, S)_{L^2}
- (\rho R, S)_{L^2}
+ (R^2, \sigma)_{L^2}
- (R^2, \tilde{\sigma})_{L^2}
\\&\le -(\sigma \na \Psi, \na R)_{L^2}
+ (\na \rho \cdot\na \Psi, S)_{L^2}
- (\rho R, S)_{L^2}
+ (R^2, \sigma)_{L^2}.
\end{aligned}
\ee Putting \eqref{twospecieserg5}--\eqref{twospecieserg6} together, we deduce the differential inequality
\be 
\beg{aligned}
&\fr{1}{2} \fr{d}{dt} \left(\|R\|_{L^2}^2 + \|S\|_{L^2}^2\right) + D\left(\|\na R\|_{L^2}^2 + \|\na S\|_{L^2}^2 \right) 
\\&\quad\quad\le -(U \cdot \na \rho, R)_{L^2} 
- (U \cdot \na \sigma, S)_{L^2} 
- 2D(\rho, RS)_{L^2} - D(\sigma, \na \Psi \cdot \na R)_{L^2}
+ D(\na \rho, S\na \Psi)_{L^2}
+ D(\sigma, R^2)_{L^2},
\end{aligned} 
\ee whose all the nonlinear terms depend solely on the difference $R, S, U$ and the solution to 
 the S-NPNS system \eqref{sys:npns-2species}. The nondependency on the solution to the modified system \eqref{sys:npns-modified} is crucial to obtain the desired estimates. As $U$, $R$, and $S$ are mean-free, we have 
\be 
\beg{aligned}
&\fr{1}{2} \fr{d}{dt} \left(\|R\|_{L^2}^2 + \|S\|_{L^2}^2\right) + D\left(\|\na R\|_{L^2}^2 + \|\na S\|_{L^2}^2 \right) 
\le C\|U\|_{L^2}^{\fr{1}{2}}\|\na U\|_{L^2}^{\fr{1}{2}} \|R\|_{L^2}^{\fr{1}{2}} \|\na R\|_{L^2}^{\fr{1}{2}} \|\na \rho\|_{L^2} 
\\&\quad\quad\quad\quad+ C\|U\|_{L^2}^{\fr{1}{2}}\|\na U\|_{L^2}^{\fr{1}{2}} \|S\|_{L^2}^{\fr{1}{2}} \|\na S\|_{L^2}^{\fr{1}{2}} \|\na \sigma\|_{L^2} 
+2D\|\rho\|_{L^4} \|\na R\|_{L^2} \|S\|_{L^2} 
\\&\quad\quad\quad\quad\quad\quad+ D\|\sigma\|_{L^4} \|R\|_{L^2} \|\na R\|_{L^2}
+ D\|\na \rho\|_{L^2} \|S\|_{L^2} \|\na R\|_{L^2}
+ D\|\sigma\|_{L^2}\|R\|_{L^2}\|\na R\|_{L^2},
\end{aligned} 
\ee due to interpolation inequalities, the Poincar\'e inequality, and elliptic regularity. A straightforward application of Young's inequality gives rise to 
\be \label{twospecieserg20}
\beg{aligned}
& \fr{d}{dt} \left(\|R\|_{L^2}^2 + \|S\|_{L^2}^2\right) 
+ \fr{3D}{2} \left(\|\na R\|_{L^2}^2 + \|\na S\|_{L^2}^2 \right) 
\\&\quad\quad\le\fr{1}{2} \|\na U\|_{L^2}^2 + C\left(\|\sigma\|_{L^2}^2 + \|\na \sigma\|_{L^2}^2 + \|\na \rho\|_{L^2}^2  \right) \left(\|U\|_{L^2}^2 + \|R\|_{L^2}^2 + \|S\|_{L^2}^2 \right).
\end{aligned} 
\ee

\smallskip
\noindent\textbf{Step 2. $L^2$ evolution of $U$ and $\na \Phi$.}  
    We take the $L^2$ inner product of the equations \eqref{twospecieserg1} and \eqref{twospecieserg2} obeyed by $U$ and $R$ with $U$ and $\Psi$ respectively. We add them and obtain 
    \be 
\beg{aligned}
&\fr{1}{2} \fr{d}{dt} \left(\|U\|_{L^2}^2 + \|\na \Psi\|_{L^2}^2 \right) + \|\na U\|_{L^2}^2 + \lambda \|P_n U\|_{L^2}^2 + D\|R\|_{L^2}^2
\\&\quad\quad= -(U \cdot \na u, U)_{L^2} - (R\na \Phi, U)_{L^2} - (\tilde{\rho} \na \Psi, U)_{L^2}
- (U \cdot \na \rho, \Psi)_{L^2} 
\\&\quad\quad\quad\quad- (\tilde{u} \cdot \na R, \Psi)_{L^2}
+ D (\na \cdot (S \na \Phi), \Psi)_{L^2}
+ D(\na \cdot (\tilde{\sigma} \na \Psi, \Psi)_{L^2}.
\end{aligned}
    \ee In view of the divergence-free condition obeyed by $U$, we integrate by parts and deduce the relation 
\be 
\beg{aligned}
&-(\tilde{\rho} \na \Psi, U)_{L^2} - (\tilde{u} \cdot \na R, \Psi)_{L^2}
= (R \na \Psi, U)_{L^2} - (\rho \na \Psi, U)_{L^2} + (U \cdot \na R, \Psi)_{L^2} - (u \cdot \na R, \Psi)_{L^2}
\\&= -(U \cdot \na R, \Psi)_{L^2} - (\rho \na \Psi, U)_{L^2} + (U \cdot \na R, \Psi)_{L^2} - (u \cdot \na R, \Psi)_{L^2}
= - (\rho \na \Psi, U)_{L^2} - (u \cdot \na R, \Psi)_{L^2}.
\end{aligned}
\ee Due to the positivity of the modified concentrations, we have 
\be 
D(\na \cdot (\tilde{\sigma} \na \Psi, \Psi)_{L^2}
= - D(\tilde{\sigma} \na \Psi, \na \Psi)_{L^2} \le 0.
\ee Consequently, we infer that the evolution inequality
\be 
\beg{aligned}
&\fr{1}{2} \fr{d}{dt} \left(\|U\|_{L^2}^2 + \|\na \Psi\|_{L^2}^2 \right) + \|\na U\|_{L^2}^2 + \lambda \|P_n U\|_{L^2}^2 + D\|R\|_{L^2}^2
\\&\quad\quad\le -(U \cdot \na u, U)_{L^2} - (R\na \Phi, U)_{L^2} - (U \cdot \na \rho, \Psi)_{L^2} 
+ D (\na \cdot (S \na \Phi), \Psi)_{L^2} 
- (\rho \na \Psi, U)_{L^2} - (u \cdot \na R, \Psi)_{L^2}
\end{aligned}
\ee holds. We estimate the nonlinearities of the system and obtain
\be 
\beg{aligned}
&\fr{1}{2} \fr{d}{dt} \left(\|U\|_{L^2}^2 + \|\na \Psi\|_{L^2}^2 \right) + \|\na U\|_{L^2}^2 + \lambda \|P_n U\|_{L^2}^2 + D\|R\|_{L^2}^2
\\&\quad\quad\le \|\na u\|_{L^2} \|U\|_{L^4}^2
+ \|R\|_{L^2} \|\na \Phi\|_{L^{\infty}} \|U\|_{L^2} 
+ D\|\na \Psi\|_{L^4} \|\na \Phi\|_{L^4} \|S\|_{L^2}
+ \|u\|_{L^4} \|\na \Psi\|_{L^{4}} \|R\|_{L^2},
\end{aligned}
\ee which yields 
\be 
\beg{aligned}
& \fr{d}{dt} \left(\|U\|_{L^2}^2 + \|\na \Psi\|_{L^2}^2 \right) + \fr{3}{2}\|\na U\|_{L^2}^2 + \lambda \|P_n U\|_{L^2}^2 + D\|R\|_{L^2}^2
\le  C\left(\|\na u\|_{L^2}^2 + \|\na \rho\|_{L^2}^2 \right)\left(\|U\|_{L^2}^2 + \|R\|_{L^2}^2 + {\|S\|_{L^2}^2}\right)
\end{aligned}
\ee after interpolating and employing elliptic estimates. By the generalized Poincar\'e inequality, we have
\be 
\beg{aligned}
&\|\na U\|_{L^2}^2 + \lambda \|P_n U\|_{L^2}^2
\ge \lambda_{n+1}\|U\|_{L^2}^2,
\end{aligned}
\ee provided that $\lambda \ge \lambda_{n+1}$. Thus, we deduce the evolution inequality
\be \label{twospecieserg21}
\beg{aligned}
& \fr{d}{dt} \left(\|U\|_{L^2}^2 + \|\na \Psi\|_{L^2}^2 \right) + \fr{1}{2}\|\na U\|_{L^2}^2 + \lambda_{n+1} \|U\|_{L^2}^2 + D\|R\|_{L^2}^2
\\&\quad\quad\le  C\left(\|\na u\|_{L^2}^2 + \|\na \rho\|_{L^2}^2 \right)\left(\|U\|_{L^2}^2 + \|R\|_{L^2}^2 +{\|S\|_{L^2}^2} \right).
\end{aligned}
\ee 

\smallskip
\noindent\textbf{Step 3. $L^2$ evolution of $\mathcal{Q}$.} Adding \eqref{twospecieserg20} and \eqref{twospecieserg21} and using the Poincar\'e inequality, we obtain
\be 
\fr{d}{dt} \mathcal{Q} + \min \left\{cD, \lambda_{n+1} 
\right\} \mathcal{Q}
\le C\left(\|\na u\|_{L^2}^2 + \|\na \rho\|_{L^2}^2 + \|\sigma\|_{L^2}^2 + \|\na \sigma\|_{L^2}^2 \right) \mathcal{Q},
\ee which gives \eqref{twospecieserg23}.
\end{proof}

\medskip
\section*{Acknowledgment}
R.H. was partially supported by a grant from the Simons Foundation (MP-TSM-00002783). Q.L. was partially supported by the AMS-Simons Travel Grant.

\medskip
\appendix

\section{Elliptic Estimates} \label{Marcin}

In this appendix, we recall the definition of weak Lebesgue spaces, state the Marcinkiewicz interpolation theorem, and use it to prove a new elliptic estimate for solutions to periodic Poisson equations.  

Let $(X, \mu)$ be a measure space. For $0 < p < \infty$, we denote by $L^{p, \infty}(X, \mu)$ the set of all $\mu$-measurable functions $f$ such that 
\be 
\|f\|_{L^{p,\infty}} = \sup \left\{\lambda d_f(\lambda)^{1/p} : \lambda > 0\right\} < \infty,
\ee where
\be 
d_f(\lambda) = \mu (\left\{x \in X: |f(x)| > \lambda  \right\}). 
\ee 
We note that $L^{p, \infty}$ is a quasinormed linear space for $0 < p <\infty$, that is 
\begin{enumerate}
    \item $\|kf\|_{L^{p,\infty}} = |k| \|f\|_{p, \infty}$ for any complex nonzero constant $k$;
    \item $\|f_1 + f_2\|_{L^{p,\infty}} \le C_p \left(\|f_1\|_{L^{p,\infty}} + \|f_2\|_{L^{p,\infty}} \right)$;
    \item If $\|f\|_{L^{p,\infty}} = 0$, then $f = 0$ $\mu$-a.e.
\end{enumerate} We refer the reader to \cite{grafakos2008classical} for a detailed exposition of weak Lebesgue spaces. 

The Marcinkiewicz interpolation theorem states the following: 

\begin{Thm} \cite{grafakos2008classical} Let $(X, \mu)$ and $(Y, \nu)$ be two measure spaces. Let $T$ be a linear operator defined on the set of all simple functions on $X$ and taking values in the set of measurable functions on $Y$. Let $0 < p_0 \ne p_1 \le \infty$ and $0 < q_0 \ne q_1 \le 
\infty$. If $T$ maps $L^{p_0}$ to $L^{q_0, \infty}$ and $L^{p_1}$ to $L^{q_1, \infty}$, and for some $\theta \in (0,1)$ we have 
\be \la{pc}
\fr{1}{p} = \fr{1-\theta}{p_0} + \fr{\theta}{p_1},
\ee 
\be \la{qc}
\fr{1}{q} = \fr{1-\theta}{q_0} + \fr{\theta}{q_1},
\ee and $p \le q$, then 
\be 
\|Tf\|_{L^q} \le C \|f\|_{L^p},
\ee for all functions $f$ in the domain of $T$. By density, $T$ has a unique extension as a bounded operator from $L^{p}(X, \mu)$ to $L^q(Y, \nu)$.
\end{Thm}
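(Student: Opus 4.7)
My plan is to follow the classical layer-cake/splitting argument for Marcinkiewicz interpolation. The starting point will be the identity
\begin{equation}
\|Tf\|_{L^q}^q = q \int_0^\infty \lambda^{q-1} d_{Tf}(\lambda)\, d\nu(\lambda),
\end{equation}
so that the task is reduced to an effective estimate on the distribution function $d_{Tf}$. For each $\lambda > 0$, I would split $f = f_\lambda^0 + f_\lambda^1$ where $f_\lambda^1 = f \cdot \mathbf{1}_{\{|f| > \delta(\lambda)\}}$ and $f_\lambda^0 = f - f_\lambda^1$, for some threshold $\delta(\lambda)$ to be chosen. Without loss of generality I may assume $p_0 < p_1$, so that $f_\lambda^1 \in L^{p_0}$ (since the large part has a fast-decaying distribution there) and $f_\lambda^0 \in L^{p_1}$ (since the small part is bounded). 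Linearity gives $|Tf| \leq |Tf_\lambda^0| + |Tf_\lambda^1|$ pointwise.

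Next I would use the quasi-triangle-type inclusion
\begin{equation}
\{|Tf| > \lambda\} \subset \{|Tf_\lambda^0| > \lambda/2\} \cup \{|Tf_\lambda^1| > \lambda/2\}
\end{equation}
together with the two weak-type hypotheses to write
\begin{equation}
d_{Tf}(\lambda) \leq \left(\frac{2A_0 \|f_\lambda^1\|_{L^{p_0}}}{\lambda}\right)^{q_0} + \left(\frac{2A_1 \|f_\lambda^0\|_{L^{p_1}}}{\lambda}\right)^{q_1}.
\end{equation}
The norms on the right are themselves expressible as integrals against the distribution function $d_f$, so plugging into the layer-cake formula and applying Fubini produces a double integral in $\lambda$ and the level $s$ of $f$. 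The threshold $\delta(\lambda)$ must be picked so that after interchanging the order of integration the resulting inner integrals converge; the natural choice is $\delta(\lambda) = c\lambda^{q/p}$ (or an equivalent power), which is exactly where the relations \eqref{pc} and \eqref{qc} enter to guarantee matching of exponents.

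The cleanest way to carry this out is to bound $\|f_\lambda^1\|_{L^{p_0}}$ and $\|f_\lambda^0\|_{L^{p_1}}$ directly by the distribution function of $f$, i.e.
\begin{equation}
\|f_\lambda^1\|_{L^{p_0}}^{p_0} = p_0 \int_{\delta(\lambda)}^\infty s^{p_0 - 1} d_f(s)\, ds, \qquad \|f_\lambda^0\|_{L^{p_1}}^{p_1} = p_1 \int_0^{\delta(\lambda)} s^{p_1 - 1} d_f(s)\, ds,
\end{equation}
substitute these into the bound on $d_{Tf}(\lambda)$, multiply by $\lambda^{q-1}$, and integrate. Swapping the order of integration and computing the resulting $\lambda$-integrals in closed form (which requires $q_0 \neq q_1$ to avoid a logarithmic divergence, and $p \leq q$ for convergence at both ends) will produce an expression of the form $\int_0^\infty s^{p-1} d_f(s)\, ds$, i.e.\ exactly $\|f\|_{L^p}^p$ up to a constant. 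The principal bookkeeping obstacle will be the algebra of exponents: verifying that $\delta(\lambda)$ can be chosen so that the resulting $\lambda$-integrals are finite precisely under \eqref{pc}--\eqref{qc} together with $p \leq q$, and tracking the constants to make the $p_0 = \infty$ or $p_1 = \infty$ endpoint cases a direct specialization (when $p_1 = \infty$ one may take $\delta(\lambda) = \lambda/(2A_1)$, so $Tf_\lambda^0$ has $L^\infty$ norm at most $\lambda/2$ and the second term in the distribution estimate vanishes, simplifying the argument). Density of simple functions in $L^p$ then yields the unique bounded extension.
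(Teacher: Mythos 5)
The paper does not prove this statement; it quotes the Marcinkiewicz interpolation theorem from \cite{grafakos2008classical} and applies it (with $p_0=1,\,q_0=2,\,p_1=2,\,q_1=1,\,\theta=1/2$, hence $p=q=4/3$) inside Proposition \ref{ellipticprop}. There is thus no internal proof to compare against, so your sketch has to stand on its own as a reconstruction of the textbook argument.

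Your outline captures the right skeleton, but the step "plug into the layer-cake formula and apply Fubini" does not close in the generality stated. After the splitting and the two weak-type hypotheses you reach terms of the form
\[
\int_0^\infty \lambda^{\,q-1-q_0}\,\|f_\lambda^1\|_{L^{p_0}}^{q_0}\,d\lambda
\;=\;
\int_0^\infty \lambda^{\,q-1-q_0}\left( \int_X |f|^{p_0}\,\mathbf{1}_{\{|f|>\delta(\lambda)\}}\,d\mu \right)^{q_0/p_0} d\lambda .
\]
A Fubini interchange can linearize this only when the inner integral appears to the first power, i.e., in the diagonal case $q_0=p_0$ (and similarly $q_1=p_1$). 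In the off-diagonal case the theorem is stated in, $q_0/p_0 \ne 1$, so the inner integral carries a non-unit power and a plain swap of the order of integration does not produce $\int_0^\infty s^{p-1}d_f(s)\,ds$. One needs a further ingredient: Hardy's inequality on $(0,\infty)$ (when $q_i/p_i\ge 1$), the subadditivity $\bigl(\sum_k a_k\bigr)^{\alpha}\le\sum_k a_k^{\alpha}$ over a dyadic decomposition (when $q_i/p_i<1$), or the standard reduction to restricted weak-type estimates on characteristic functions followed by a reassembly argument. Without one of these, the argument as proposed is genuinely incomplete; it would suffice verbatim only in the diagonal case, which is \emph{not} the configuration used in the paper (there $p_0\ne q_0$ and $p_1\ne q_1$ even though $p=q$). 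Two smaller points: the layer-cake identity should be integrated against Lebesgue measure $d\lambda$ on $(0,\infty)$, not $d\nu(\lambda)$; and the threshold $\delta(\lambda)=c\lambda^\sigma$ has its scaling exponent $\sigma$ dictated by all four exponents, and equals $q/p$ only in special cases.
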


Using the Marcinkiewicz interpolation theorem and employing Fourier series techniques, we now study the regularity of solutions to Poisson equations equipped with periodic boundary conditions:

\beg{prop} \label{ellipticprop}
Let $\tilde{\rho} \in L^{2}(\TT^2)$ have a zero spatial average over $\TT^2$ and $\tilde{\Phi}$ be the solution of the Poisson equation 
\be \label{periodicpoisson}
- \Delta \tilde{\Phi} = \tilde{\rho}
\ee with periodic boundary conditions. Then it holds that
\be \label{torusmom}
\|\na \tilde{\Phi}\|_{L^4}
\le C\|\tilde \rho\|_{L^{\fr{4}{3}}}.
\ee 
\end{prop}

\begin{proof} We write the Fourier series of $\tilde \rho$ as 
\be 
\tilde{\rho}  = \sum\limits_{k \in \ZZ^2 \setminus \left\{0\right\}} \tilde{\rho}_k e^{ik \cdot x},
\ee where the Fourier coefficient $\tilde{\rho}_0$ vanishes due to the mean-free property satisfied by $\tilde \rho$.

\smallskip
\noindent{\bf{Step 1.}} We prove the existence of a positive universal constant $C>0$ such that the estimate 
\be 
\|\na \tilde \Phi\|_{L^4(\TT^2)} \le C\left\||k|^{-1} \tilde \rho_k\right\|_{\ell^{\fr{4}{3}}(\ZZ^2 \setminus \left\{0\right\})} 
\ee holds. Indeed, the solution $\tilde{\Phi}$ to \eqref{periodicpoisson} is given by 
\be 
\tilde \Phi = \l^{-2} \tilde \rho,
\ee where $\l:= \sqrt{-\Delta}$ is the square root of the 2D periodic Laplacian subject to periodic boundary conditions. 
Thus, its gradient is controlled in $L^4(\TT^2)$ by 
\be 
\|\na \tilde \Phi\|_{L^4(\TT^2)}
\le  C\|\l^{-1} \tilde \rho\|_{L^4(\TT^2)},
\ee due to the boundedness of the periodic Riesz transform $\na \l^{-1}$ on $L^4(\TT^2)$. We fix $\psi \in L^{4/3}(\TT^2)$ and write its Fourier series as
\be 
\psi = \sum\limits_{j \in \ZZ^2} \psi_j e^{ij \cdot x}.
\ee Thus, we have 
\be 
(\l^{-1} \tilde  \rho, \psi)_{L^2}
= (2\pi)^2 \sum\limits_{k \in \ZZ^2 \setminus \left\{0\right\}} |k|^{-1} \tilde{\rho}_k \psi_{-k} 
\ee by Parseval's identity. A direct application of H\"older's inequality yields 
\be 
|(\l^{-1} \tilde  \rho, \psi)_{L^2}|
\le C\|\psi_k\|_{\ell^4(\ZZ^2)} \left\||k|^{-1} \tilde \rho_k \right\|_{\ell^{\fr{4}{3}}(\ZZ^2 \setminus \left\{0\right\})}.
\ee In view of the Hausdorff-Young inequality, we have
\be 
\|\psi_k\|_{\ell^4(\ZZ^2)}
\le C\|\psi\|_{L^{\fr{4}{3}}(\TT^2)},
\ee and thus, 
\be 
|(\l^{-1} \tilde  \rho, \psi)_{L^2}|
\le C\|\psi\|_{L^{\fr{4}{3}}(\TT^2)} \left\||k|^{-1} \tilde \rho_k \right\|_{\ell^{\fr{4}{3}}(\ZZ^2 \setminus \left\{0\right\})}.
\ee Taking the supremum over all functions $\psi \in L^{\fr{4}{3}}(\TT^2)$ with $\|\psi\|_{L^{\fr{4}{3}}(\TT^2)} \le 1$ gives 
\be 
\|\l^{-1}\tilde \rho\|_{L^4(\TT^2)} \le C  \left\||k|^{-1} \tilde \rho_k \right\|_{\ell^{\fr{4}{3}}(\ZZ^2 \setminus \left\{0\right\})},
\ee completing the proof of Step 1. 

\smallskip
\noindent{\bf{Step 2.}} We prove the existence of a positive universal constant $C$ such that 
\be 
\left\||k|^{-1} \tilde{\rho}_k \right\|_{\ell^{\fr{4}{3}}(\ZZ^2 \setminus \left\{0\right\})}
\le C\|\tilde \rho\|_{L^{\fr{4}{3}}(\TT^2)}
\ee holds. We let $\mu$ be the counting measure on $\ZZ^2$ and consider the operator
\be 
(\mathcal{P} h)(k) = \fr{1}{|k|} h_k,
\ee where $h$ is a mean-free function having the Fourier series representation
\be \label{fourrep}
h = \sum\limits_{j \in \ZZ^2 \setminus \left\{0\right\}} h_j e^{ij \cdot x},
\ee and $k \in \ZZ^2 \setminus \left\{0\right\}$.

On the one hand, we have
\be 
\mu (\left\{|\mathcal{P}h|> \lambda \right\})
= \mu (\left\{|k| <\lambda^{-1} |h_k|  \right\})
\le \mu (\left\{|k| \le \lambda^{-1} \|h\|_{L^1(\TT^2)} \right\}) 
\le C\lambda^{-2} \|h\|_{L^1(\TT^2)}^2,
\ee for any $\lambda > 0$ and periodic mean-free function $h \in L^1(\TT^2)$ with representation \eqref{fourrep}. Hence, we deduce that 
\be 
\sup\limits_{\lambda > 0} \lambda \mu (\left\{|\mathcal{P}h| > \lambda \right\})^{\fr{1}{2}} \le C\|h\|_{L^1(\TT^2)},
\ee and so the operator $\mathcal{P}$ is bounded from $L^1(\TT^2)$ into the weak Lebesgue space $L^{2, \infty}(\ZZ^2 {\setminus \{0\}} , \mu)$. On the other hand, we estimate 
\be \la{marcin1}
\mu (\left\{|\mathcal{P}h| > \lambda \right\}) = \mu (\left\{|k|^{-1}|h_k| > \lambda \right\})
\le \mu \left(\left\{\fr{|k|^{-2}}{2} >  \fr{\lambda}{2} \right\}\right) + \mu \left(\left\{\fr{|h_k|^2}{2} > \fr{\lambda}{2} \right\}\right),
\ee using the algebraic inequality $ab \le \fr{a^2}{2} + \fr{b^2}{2}$ and the countable subadditivity of the measure $\mu$. We note that 
\be 
\mu (\left\{\fr{|k|^{-2}}{2} >  \fr{\lambda}{2} \right\})
= \mu (\left\{|k| \le \fr{1}{\sqrt{\lambda}} \right\})
\le C\lambda^{-1},
\ee where the last inequality is an upper bound for the measure of the ball in $\ZZ^2$ with radius $\lambda^{-\fr{1}{2}}$. Moreover, applications of the Chebyshev's inequality  and Parseval's identity give rise to the estimate
\be \la{marcin2}
\mu (\left\{\fr{|h_k|^2}{2} > \fr{\lambda}{2} \right\})
\le \fr{1}{\lambda} \sum\limits_{k \in \ZZ^2 \setminus \left\{0\right\}} |h_k|^2
= C\lambda^{-1} \|h\|_{L^2(\TT^2)}^2,
\ee for any $\lambda >0$ and $h \in L^2(\TT^2)$ with series representation \eqref{fourrep}. Putting \eqref{marcin1}--\eqref{marcin2} together, we deduce that
\be 
\sup\limits_{\lambda >0} \lambda \mu (\left\{|\mathcal{P}h| > \lambda \right\}) \le {C( 1 + \|h\|_{L^2}^2)},
\ee for any $h \in L^2(\TT^2)$ represented by \eqref{fourrep}. This latter inequality implies that the linear operator $\mathcal{P}$ is bounded from $L^2(\TT^2)$ into the weak Lebesgue space $L^{1, \infty}(\ZZ^2{\setminus \{0\}}, \mu)$. Appealing to the Marcinkiewicz interpolation theorem, we deduce that for any mean-free function $h \in L^{\fr{4}{3}}(\TT^2)$, 
\be 
\| \mathcal{P} h \|_{\ell^{\fr{4}{3}}(\ZZ^2 {\setminus \{0\}})} \le C\|h\|_{L^{\fr{4}{3}}(\TT^2)},
\ee where $C$ is a positive universal constant independent of $h$. In particular, 
\be 
\| |k|^{-1} \tilde{\rho}_k\|_{\ell^{\fr{4}{3}}(\ZZ^2 {\setminus \{0\}})} \le C\|\tilde \rho\|_{L^{\fr{4}{3}}(\TT^2)},
\ee
 finishing the proof of Proposition \ref{ellipticprop}.
\end{proof}

\section{Exponential Ergodicity} \label{ExpErgFramework}

In this appendix, we reformulate the generalized coupling framework established in \cite{butkovsky2020generalized} for the reader's convenience.

Let $(E, \rho)$ be a Polish space. Let $(P_t)$ be a Feller Markov kernel satisfying the following conditions:
\begin{enumerate}
    \item There exists a measurable function $S: E \rightarrow [0, \infty]$ and a premetric $q$ on $E$ such that for any $v, \tilde{v} \in E$, there exists a couple of progressively measurable random processes $X^{v,\tilde{v}} = (X_t^{v,\tilde{v}})_{t \ge 0}, Y^{v,\tilde{v}}= (Y_t^{v,\tilde{v}})_{t \ge 0}$ satisfying the dissipativity bound
    \be 
q(X_t^{v, \tilde{v}}, Y_t^{v, \tilde{v}}) 
\le q(X_0, Y_0) e^{- \zeta t + \kappa \int_{0}^{t} S(X_s^{v, \tilde{v}})} ds,
    \ee for $t \ge 0$ and some constants $\zeta > 0$, $\kappa  \ge 0.$
    \item There exists a lower semicontinuous function $U: E \rightarrow [0, \infty)$ such that the energy estimate
    \be 
U(X_t^{v, \tilde{v}}) + \mu \int_{0}^{t} S(X_s^{v, \tilde{v}}) ds
\le U(X_0^{v, \tilde{v}}) + bt + M_t,
    \ee holds for any $t \ge 0$, where
    \begin{enumerate}
        \item $\mu > 0, b \ge 0$ are some constants such that $\zeta > \frac{\kappa b}{\mu}$;
        \item $M$ is a continuous local martingale with $M_0 = 0$ and so that its quadratic variation $\left<M\right>_t$ obeys
        \be 
d\left<M\right>_t \le b_1 S(X_t^{v, \tilde{v}}) dt + b_2 dt,
        \ee for $t \ge 0$ where $b_1, b_2 \ge 0.$
    \end{enumerate}
    \item Let $W$ be an $m$-dimensional Brownian motion, $m \ge 1$. There exists a constant $c>0$ such that for every $t  \ge 0$, $v, \tilde{v} \in E$, there exists a measurable function $\Phi = \Phi^{t, v, \tilde{v}}: \mathcal{C}[0,t] \rightarrow E$ and progressively measurable processes $\beta^{v, \tilde{v}}, \xi^{v, \tilde{v}}: \Omega \times [0,t] \rightarrow \RR^m$ such that
    \begin{enumerate}
        \item $d\xi_s^{v, \tilde{v}} = dW_s + \beta_s^{v, \tilde{v}} ds, s \in [0,t]$;
        \item $Law (\Phi(W_{[0,t]})) = P_t(\tilde{v}, 0)$ and $\Phi(\xi_{[0,t]}) = Y_t^{v, \tilde{v}}$;
        \item For each $s \in [0,t], |\beta_s|^2 \le cq(X_s^{v, \tilde{v}}, Y_s^{v, \tilde{v}})$.
    \end{enumerate} Here $f_{[0,t]} = \left\{f(s): s \in [0,t] \right\}$.
    \item There exists a measurable function $V: E \rightarrow \R_+$ such that for some $\gamma > 0, K > 0$, 
    \be 
\mathbb{E} V(X_t) \le V(v) - \gamma \mathbb{E} \int_{0}^{t} V(X_s) ds + Kt,
    \ee for $t \ge 0$ and $v \in E$. 
    \item For any $M > 0$, the function $U(\cdot)$ and $q(\cdot, \cdot)$ are bounded on the level sets $\left\{V \le M \right\}$ and $\left\{V \le M \right\} \times \left\{V \le M \right\}$ respectively. 
\end{enumerate}
If the Markov semigroup $P$ has an invariant measure $\pi$, then it is unique and 
\be 
W_{q(v,\tilde{v})^{\delta} \wedge q(\tilde{v}, v)^{\delta} \wedge 1} (P_t(v, \cdot), \pi)
\le C(1 + V(v)) e^{-rt},
\ee for $t \ge 0$, $v \in E$, some $C, r >0$, and an arbitrary $\delta > 0$. Here, 
\be \label{wasserstein}
W_d(\mu, \nu) = \inf\limits_{\lambda \in C(\mu, \nu)} \int_{E \times E} d(x,y) \lambda (dx, dy), \mu, \nu \in \mathcal{P}(E),
\ee where
\begin{enumerate}
    \item $\mathcal{P}(E)$ is the set of all Borel probability measures on $E$;
    \item $d = q(v, \tilde{v})^{\delta} \wedge q(\tilde{v}, v)^{\delta} \wedge 1$;
    \item $C(\mu,\nu)$ is the set of all couplings between $\mu$ and $\nu$, that is probability measures on $E \times E$ with marginals $\mu$ and $\nu$.
\end{enumerate}

  


\bibliographystyle{plain}
\bibliography{reference}

\end{document}